\numberwithin{equation}{section}
\theoremstyle{plain}
\newtheorem{Thm}{Theorem}[section]
\newtheorem*{Thm*}{Theorem}
\newtheorem{Lem}[Thm]{Lemma}
\newtheorem{Prop}[Thm]{Proposition}
\theoremstyle{definition}
\newtheorem{Rem}[Thm]{Remark}
\newtheorem{?}[Thm]{Problem}
\newcommand{\ovl}{\overline}
\newcommand{\p}{\partial}
\newcommand{\R}{\mathbb{R}}
\newcommand{\e}{\varepsilon}
\newcommand{\E}{\mathbf{e}}
\newcommand{\h}{\mathbf{h}}
\newcommand{\g}{\mathbf{g}}
\newcommand{\f}{\mathbf{f}}
\newcommand{\Torus}{\mathbb{T}}
\newcommand{\dv}{\text{div}}
\newcommand{\lap}{\triangle}
\newcommand{\dnab}{\cdot \nabla}
\newcommand{\rhob}{\ovl{\rho}}
\newcommand{\ub}{\ovl{u}}
\newcommand{\rhot}{\tilde{\rho}}
\newcommand{\ut}{\tilde{u}}
\newcommand{\uv}{\mathbf{u}}
\newcommand{\wv}{\mathbf{w}}
\newcommand{\zv}{\mathbf{z}}
\newcommand{\uvb}{\ovl{\mathbf{u}}}
\newcommand{\uvt}{\tilde{\mathbf{u}}}
\newcommand{\ot}{\tilde{\omega}}
\newcommand{\abs}[1]{\left\lvert#1\right\rvert}
\newcommand{\norm}[1]{\left\lVert#1\right\rVert}
\let\oldtocsection=\tocsection
\let\oldtocsubsection=\tocsubsection
\let\oldtocsubsubsection=\tocsubsubsection
\renewcommand{\tocsection}[2]{\hspace{0em}\oldtocsection{#1}{#2}}
\renewcommand{\tocsubsection}[2]{\hspace{1em}\oldtocsubsection{#1}{#2}}
\renewcommand{\tocsubsubsection}[2]{\hspace{2em}\oldtocsubsubsection{#1}{#2}}
\begin{document}


\title[rarefaction waves under periodic perturbations]{Asymptotic stability of planar rarefaction waves for 3-d isentropic Navier-Stokes equations under periodic perturbations}

\author[F. Huang]{Feimin HUANG$ ^{1,2} $}

\address{$^1$ Academy of Mathematics and Systems Science,  CAS, Beijing 100190, China}
\address{$^2$ School of Mathematical Sciences, University of Chinese Academy of Sciences, Beijing 100049, China}
\address{$^3$ Department of Mathematics, Yau Mathematical Sciences Center, Tsinghua University, Beijing 100084, China}

\thanks{$ * $ Corresponding author}
\thanks{Feimin Huang is partially supported by NSFC Grant No. 11371349 and 11688101.}

\email{fhuang@amt.ac.cn}

\author[L. Xu]{Lingda Xu$ ^3 $}
\email{xulingda@tsinghua.edu.cn}

\author[Q. Yuan]{Qian YUAN$ ^{1,*} $}
\thanks{Qian Yuan is supported by the China Postdoctoral Science Foundation funded projects 2019M660831 and 2020TQ0345. }
\email{qyuan103@link.cuhk.edu.hk}

\maketitle

\begin{abstract}
We study the asymptotic stability of a planar rarefaction wave (in the $ x_1 $- direction) for the 3-d isentropic Navier-Stokes equations, where the initial perturbation is periodic  on the torus $ \Torus^3 $ with zero average.
To solve this Cauchy problem in which the initial data is periodic with respect to only $ x_2 $ and $ x_3 $ but not to $ x_1, $ we construct a suitable ansatz carrying the oscillations of the solution in the $ x_1 $- direction, but remaining to be periodic in the transverse $ x_2 $- and $ x_3 $- directions. 
In such a way, the difference between the ansatz and the solution can be integrable on the region $ \R\times\mathbb{T}^2, $ which allows us to utilize the energy method with the aid of a Gagliardo-Nirenberg type inequality on $ \R\times\mathbb{T}^2 $ to prove the result.
\end{abstract}

\tableofcontents


\section{Introduction}

We consider a Cauchy problem of the three-dimensional (3-d) isentropic compressible Navier-Stokes (CNS) equations, which read in $ \R^3 $ as
\begin{equation}\label{NS}
\begin{cases}
\p_t \rho + \dv \left( \rho \uv \right) = 0, \\
\p_t \left(\rho \uv \right) + \dv \left( \rho \uv \otimes \uv \right) + \nabla p(\rho) = \mu \lap \uv + \left(\mu+\lambda \right) \nabla \dv \uv,
\end{cases}
\end{equation}
where $ t>0, x=(x_1,x_2,x_3)\in \R^3, $ $ \rho(x,t)>0 $ is the density, $ \uv(x,t) = \left( u_1,u_2,u_3 \right)(x,t)\in\R^3 $ is the velocity, the pressure $ p(\rho) $ satisfies $ p(\rho) = \rho^\gamma $ with $ \gamma\geq 1, $ and $ \mu > 0 $ and $ 
\lambda+\frac{2}{3}\mu \geq 0 $ are the viscous coefficients.

When $ \mu = \lambda =0, $ \cref{NS} turns to the 3-d isentropic compressible Euler equations. A planar centered rarefaction wave $ \left(\rho^r, \uv^r\right)(x,t) = \left(\rho^r, u_1^r, 0,0\right)(x_1 ,t) $ is a weak entropy solution to this hyperbolic system, where $ \left(\rho^r, u_1^r \right) $ solves the following Riemann problem in one dimension,
\begin{equation}\label{Euler}
\begin{cases}
\p_t \rho + \p_1  \left( \rho u_1 \right) = 0, & \\
\p_t \left(\rho u_1 \right) + \p_1  \left( \rho u_1^2 \right) + \p_1  p(\rho) = 0, & \\
\left( \rho, u_1 \right)(x_1 ,0) = \begin{cases}
\left( \rhob^-, \ub_1^- \right), \quad x_1 <0, & \\
\left( \rhob^+, \ub_1^+ \right), \quad x_1 >0. &
\end{cases}
\end{cases}
\end{equation}
In this paper, we consider only the 2-rarefaction wave, i.e., the constants of the initial data in \cref{Euler} satisfy the relation
\begin{equation}\label{rel-const}
\ub_1^+ = \ub_1^- + \int_{\rhob^-}^{\rhob^+} \frac{\sqrt{p'(s)}}{s} ds \quad \text{ with } \quad \rhob^- < \rhob^+.
\end{equation}
We remark that the cases for the 1-rarefaction wave and a combination of two families of rarefaction waves can be proved in a similar way.

The 2-rarefaction wave $ \left(\rho^r, u_1^r\right)(x_1,t) $ can be solved as follows.
For $ \rho>0, $ the system \cref{Euler} is strictly hyperbolic with two distinct eigenvalues $$ \lambda_1(\rho,u_1) = u_1-\sqrt{p'(\rho)},\quad \lambda_2(\rho,u_1) = u_1 +\sqrt{p'(\rho)}. $$ 
One can normalize the corresponding right eigenvectors $ r_i $  as $ \nabla \lambda_i \cdot r_i \equiv 1 ~ (i=1,2). $ And the i-Riemann invariant $ (i=1,2) $ is given by 
$$ Z_i(\rho,u_1) = u_1 + (-1)^{i+1} \int_{1}^{\rho} \frac{\sqrt{p'(s)}}{s} ds, $$ which satisfies $ \nabla Z_i \cdot r_i \equiv 0. $
Denote $ \lambda_2^\pm := \lambda_2\left(\rhob^\pm, \ub_1^\pm \right) $ and $ Z_2^\pm := Z_2\left(\rhob^\pm, \ub_1^\pm \right). $
Then the 2-centered rarefaction wave $ \left(\rho^r,u_1^r\right) $ can be solved exactly by 
\begin{equation}\label{solve-rare}
\begin{aligned}
& \lambda_2\left(\rho^r(x_1 ,t), u_1^r(x_1 ,t)\right) = u_1^r(x_1 ,t)+\sqrt{p'(\rho^r)}(x_1 ,t) = \omega \left(\frac{x_1}{t}\right), \\ 
& Z_2\left(\rho^r(x_1 ,t), u_1^r(x_1 ,t)\right) = u_1^r(x_1 ,t) - \int_{1}^{\rho^r(x_1 ,t)} \frac{\sqrt{p'(s)}}{s} ds = Z_2^- \big(= Z_2^+\big),
\end{aligned}
\end{equation}
where
\begin{equation*}
\omega \left(\xi \right) = \begin{cases}
\lambda_2^-, \quad & \xi < \lambda_2^-, \\
\xi, \quad &  \lambda_2^- \leq \xi < \lambda_2^+, \\
\lambda_2^+, \quad & \xi \geq  \lambda_2^+.
\end{cases}
\end{equation*}

It is well-known that the compressible Euler equations have three important entropy solutions, the shock, the rarefaction wave and the contact discontinuity, which are called the Riemann solutions. For the 1-d case, it has been shown in many literatures that these Riemann solutions characterize the large time behaviors of the solutions, as long as the initial data tend to constant states at far field. In other words, the Riemann solutions are time-asymptotically stable if the initial perturbations are integrable on $ \R $ (at least $ L^1\cap L^2 $- integrable).
For the CNS equations, due to the effect of viscosity, the large time behaviors of the solutions are governed by the viscous versions of these three basic waves, i.e. the viscous shock wave, the rarefaction wave and the viscous contact discontinuity.
For instance, Goodman \cite{G} and Matsumura-Nishihara \cite{MN} independently proved the stability of a single viscous shock wave with a zero-mass condition by using the anti-derivative method. And then there have been a lot of efforts \cite{Liu1,Liu3,SX} to remove the zero-mass condition, where the key is to introduce some diffusion waves propagating along the directions of other characteristic families in order to carry the excessive masses. For the rarefaction wave, Matsumura-Nishihara \cite{MN3} was the first to show the stability for the isentropic CNS system, where the initial perturbations can be large but the density is away from the vacuum, and we also refer to \cite{NYZ} for the stability result of the full CNS system. For the contact discontinuity, Huang-Matsumura-Xin \cite{HMX} and Huang-Xin-Yang \cite{HXY} proved the stability with an algebraic decay rate by constructing a suitable ansatz and using the anti-derivative method.

For the multi-dimensional (m-d) wave patterns,  Xin \cite{Xin1990} showed in 1990 that the planar rarefaction waves are stable for the scalar viscous conservation laws through an $L^2$- energy method. Since then, \cite{Ito1996,NN2000,KNM2004} and the reference therein further improved the results of Xin \cite{Xin1990}, where both the strength of the wave and the initial perturbation can be large and an optimal decay rate $ t^{-\frac{1}{2}} $ is also obtained. 
However, for the case of the m-d Navier-Stokes equations, the stability of the planar rarefaction waves is still a challenging open problem.  
For a 2 $\times$ 2
system with an artificial viscosity matrix, Hokari-Matsumura \cite{HoM} proved the stability of the planar rarefaction wave in two dimensions,
which crucially depends on the strict positivity of the viscosity matrix. Recently, Li-Wang \cite{LW} and Li-Wang-Wang \cite{LWW} showed the stability of the planar rarefaction waves for the Navier-Stokes equations on the domain $ \R\times\Torus $ and on an infinitely long nozzle domain $\mathbb{R}\times\mathbb{T}^2, $ respectively, where the periodic boundary conditions are imposed in their settings.

In this paper, we consider a Cauchy problem of the 3-d isentropic CNS system, concerning the stability of the planar rarefaction waves under space-periodic perturbations. 
It is important and interesting to study the stability of the Riemann solutions under periodic perturbations, where the initial data tend to different periodic functions at far fields. It was shown in \cite{XYY2019,Xin2019,YY2019} that for the 1-d scalar conservation laws, the periodic oscillations around the shocks and rarefaction waves can be canceled as time increases due to the genuine nonlinearity of the flux.
Recently, Huang-Yuan \cite{HY2020} further studied the m-d scalar viscous conservation laws to show the stability of the scalar planar rarefaction waves under m-d periodic perturbations. In particular, a Gagliardo-Nirenberg type inequality on $ \R\times\Torus^{n-1} $ was introduced, which plays an important role when doing energy estimates on this unbounded domain without zero boundary conditions. We also refer to \cite{HY1shock,YY2shock} for the viscous shocks under periodic perturbations for the 1-d CNS equations.

\vspace{0.3cm}

Now we formulate the main result of this paper.
Since the centered rarefaction wave $ \left(\rho^r, u_1^r\right)(x_1,t) $ is only Lipschitz continuous, we need to construct a smooth approximation as in \cite{Matsumura1986}. 
Let $ \ot(x_1 ,t) $ be the unique smooth solution to the problem,
\begin{equation*}
\begin{cases}
\p_t \ot + \p_1  \big( \frac{\ot^2}{2} \big) =0, \\
\ot(x_1 ,0) = \frac{\lambda_2^- + \lambda_2^+}{2} + \frac{\lambda_2^+-\lambda_2^-}{2} \text{tanh}(x_1).
\end{cases}
\end{equation*}
Same as \cref{solve-rare}, we let $ \left(\rhot^r, \ut_1^r\right) $ be the smooth functions solved uniquely by
\begin{equation}\label{def-sm-rare}
\begin{aligned}
\ut_1^r(x_1 ,t)+\sqrt{p'(\rhot^r)}(x_1 ,t) & = \ot(x_1 ,t), \\ 
\ut_1^r(x_1 ,t) - \int_{1}^{\rhot^r(x_1 ,t)} \frac{\sqrt{p'(s)}}{s} ds & = Z_2^- \big(= Z_2^+\big).
\end{aligned}
\end{equation}
From this, one has that
\begin{equation}\label{rel-key}
\ut_1^r(x_1 ,t) = \ub_1^- + \int_{\rhob^-}^{\rhot^r(x_1 ,t)} \frac{\sqrt{p'(s)}}{s} ds.
\end{equation}


To study the stability of the planar rarefaction waves under 3-d periodic perturbations, we prescribe the initial data for \cref{NS} as 
\begin{equation}\label{ic}
\left(\rho, \rho\uv\right)(x,0) =  \left(\rhot^r, \rhot^r \ut_1^r, 0, 0\right)(x_1) + \left(v_0, \wv_0 \right)(x), \quad x\in\R^3,
\end{equation}
where $ v_0(x) \in\R $ and $ \wv_0(x) = \left(w_{1,0},w_{2,0},w_{3,0} \right)(x) \in \R^3 $ are periodic functions defined on the 3-d torus $ \Torus^3 := [0,1]^3, $ satisfying
\begin{equation}\label{zero-ave}
\int_{\Torus^3} \left(v_0, \wv_0\right)(x) dx = 0.
\end{equation}

\begin{Rem}\label{Rem-zero-ave}
The condition \cref{zero-ave} indicates that the periodic perturbations of the conservative quantities, the density and the momentum, should have zero averages. 
Otherwise, if the condition \cref{zero-ave} does not hold, the problem \cref{NS},\cref{ic} turns to be connected with other kinds of Riemann solutions, such like a shock or a combination of multiple waves, which is not the topic of this paper and will be studied in future works.
\end{Rem} 

\begin{Rem}\label{Rem-Omega}
The solution $ (\rho,\uv) $ to the problem \cref{NS}, \cref{ic} is periodic with respect to only $ x_2 $ and $ x_3, $ but not to $ x_1. $ Thus, the problem cannot be studied on the bounded torus $ \Torus^3, $ but on the unbounded domain $ \Omega := \R\times \Torus^2 $ instead. However, the solution keeps oscillating as $ \abs{x_1}\rightarrow +\infty, $ thus a suitable ansatz is needed if we want to use the energy method.
\end{Rem}

Before stating the main theorem, we first  introduce two periodic solutions to \cref{NS} and construct the ansatz of the solution to \cref{NS},\cref{ic}.

Let $ \left( \rho^\pm, \uv^\pm \right)(x,t) = \left( \rho^\pm, u_1^\pm, u_2^\pm, u_3^\pm \right)(x,t) $ denote the unique periodic solutions to \cref{NS} with the periodic initial data
\begin{equation}\label{ic-per}
\left( \rho^\pm, \rho^\pm \uv^\pm \right)(x,0) = \left( \rhob^\pm, \rhob^\pm \ub_1^\pm, 0, 0 \right) + \left(v_0,\wv_0 \right)(x), \quad x\in\Torus^3,
\end{equation}
respectively. 
The global existence and uniqueness of the periodic solutions to \cref{NS}, \cref{ic-per}$ ^\pm $ are standard; see \cref{Lem-per} below. 
Comparing \cref{ic} and \cref{ic-per}$ ^\pm, $ one can see that the periodic solutions $ (\rho^\pm,\uv^\pm) $ share same behaviors as $ (\rho,\uv) $ at far fields, i.e., it holds that
\begin{align*}
(\rho,\rho \uv)(x,0)-(\rho^\pm, \rho^\pm\uv^\pm)(x,0) = (\rhot^r, \rhot^r \uvt^r)(x_1,0) - (\rhob^\pm, \rhob^\pm \uvb^\pm) \quad \forall (x_2, x_3) \in \R^2,
\end{align*} 
which tends to zero as $ x_1\rightarrow \pm \infty, $ respectively. And similarly, if $ \norm{v_0}_{L^\infty(\R^3)} $ is small enough, the difference of the velocity satisfies that
\begin{align*}
\uv(x,0)-\uv^\pm(x,0) & = \frac{(\rhot^r \uvt^r)(x_1,0)+\wv_0(x)}{\rhot^r(x_1,0) +v_0(x)} - \frac{\rhob^\pm \uvb^\pm +\wv_0(x)}{\rhob^\pm +v_0(x)} \\
& = \frac{\left( \rhot^r(x_1,0)-\rhob^\pm \right) \left( v_0(x)\uvb^\pm-\wv_0(x)\right)}{(\rhot^r(x_1,0)+v_0(x))(\rhob^\pm + v_0(x))} + \frac{\rhot^r(x_1,0) (\uvt^r(x_1,0) - \uvb^\pm)}{\rhot^r(x_1,0)+ v_0(x)},
\end{align*}
which yields that
\begin{align*}
\sup_{(x_2,x_3)\in \R^2} \abs{\uv(x,0)-\uv^\pm(x,0)} \leq C \left(\abs{\rhot^r(x_1,0)-\rhob^\pm} + \abs{u_1^r(x_1,0)-\ub_1^\pm}\right) \rightarrow 0 \text{ as } x_1\rightarrow \pm\infty.
\end{align*} 
Observing from this, it is plausible that, when $ t>0, $ the solution $ (\rho, \uv) $ to \cref{NS}, \cref{ic} tends to the periodic solutions $ (\rho^\pm, \uv^\pm) $ as $ x_1\rightarrow \pm\infty, $ respectively as well. 
It is noted that due to the conservative form of \cref{NS} with the condition \cref{zero-ave}, the periodic perturbations
\begin{equation}\label{vw-pm}
	\left(v^\pm, \wv^\pm \right)(x,t) := \left( \rho^\pm, \rho^\pm \uv^\pm \right)(x,t) - \left( \rhob^\pm, \rhob^\pm \ub_1^\pm, 0, 0 \right)
\end{equation}
satisfy that $ \int_{\Torus^3} \left(v^\pm, \wv^\pm\right)(x,t) dx \equiv 0, $ i.e. the periodic solutions $ (\rho^\pm, \rho^\pm\uv^\pm)(x,t) $ still have the same averages $ (\rhob^\pm, \rhob^\pm\uvb^\pm) $ as the initial data \cref{ic-per}$ ^\pm $ for all $ t\geq 0. $ And it is well-known (see \cref{Lem-per}) that the periodic solutions $ (\rho^\pm, \rho^\pm\uv^\pm)(x,t) $ tend to their averages as $ t\to +\infty. $ And this is the reason why the condition \cref{zero-ave} is necessary to ensure the stability of the background rarefaction wave, as stated in \cref{Rem-zero-ave}.
The ansatz is constructed as follows, which is similar to \cite{HY2020}.

\textbf{Ansatz.}
Inspired by the formulas of the background smooth rarefaction wave,
\begin{equation}\label{inspire}
\begin{aligned}
\rhot^r(x_1,t) & = \rhob^- \left(1-\sigma(x_1,t)\right) + \rhob^+ \sigma(x_1,t), \\
\ut_1^r(x_1,t) & = \ub_1^- \left(1-\eta(x_1,t)\right) + \ub_1^+ \eta(x_1,t),
\end{aligned}
\end{equation}
where 
\begin{equation}\label{theta-sig}
\sigma(x_1,t) := \frac{\rhot^r(x_1,t) - \rhob^-}{\rhob^+-\rhob^-}, \quad 
\eta(x_1,t) := \frac{\ut_1^r(x_1,t) - \ub_1^-}{\ub_1^+-\ub_1^-},
\end{equation}
we set the ansatz $ (\rhot,\uvt) = (\rhot, \ut_1, \ut_2,\ut_3) $ as
\begin{equation}\label{ansatz}
\begin{aligned}
\rhot(x,t) & := \rho^-(x,t) \left(1-\sigma(x_1,t)\right) + \rho^+(x,t) \sigma(x_1,t), \\
& = \rhot^r(x_1,t) + v^-(x,t) (1-\sigma(x_1,t)) + v^+(x,t) \sigma(x_1,t), \\
\uvt(x,t) & := \uv^-(x,t) \left(1-\eta(x_1,t)\right) + \uv^+(x,t) \eta(x_1,t), \\
& = \ut_1^r(x_1,t) \E_1 + \zv^-(x,t) (1-\eta(x_1,t)) + \zv^+(x,t) \eta(x_1,t),
\end{aligned}
\end{equation}
where $ \E_1 := (1,0,0) $ is a unit vector and $ \zv^\pm(x,t) := \uv^\pm(x,t) - \uvb^\pm. $ Note that if $ \norm{v_0}_{L^\infty(\Torus^3)} $ is small enough, it follows form \cref{vw-pm} that
\begin{equation}\label{z-pm}
	\zv^\pm(x,t) = \frac{ \wv^\pm(x,t) - v^\pm(x,t) \uvb^\pm }{\rho^\pm(x,t)}.
\end{equation}
Same as the solution to \cref{NS}, \cref{ic}, the ansatz \cref{ansatz} is also periodic with respect to only $ x_2 $ and $ x_3, $ but not to $ x_1. $ Recall the domain
$ \Omega = \R \times \Torus^2. $ We are ready to state the main theorem.
\begin{Thm}\label{Thm}
	Assume in \cref{ic} that the periodic perturbation $ \left(v_0,\wv_0 \right) \in H^5(\Torus^3) $ and satisfies \cref{zero-ave}. Then there exist small $ 
	\delta_0>0 $ and $ \e_0>0 $ such that, if 
	\begin{equation}\label{small}
	\begin{aligned}
	\abs{\rhob^+ -\rhob^-}\leq \delta_0 \quad \text{ and } \quad \norm{\left(v_0,\wv_0 \right)}_{H^5(\Torus^3)} \leq \e_0,
	\end{aligned}
	\end{equation}
	the problem \cref{NS}, \cref{ic} admits a unique global solution $ \left(\rho,\uv\right)(x,t), $ which is periodic with respect to $ x_2 $ and $ x_3, $ and satisfies that
	\begin{equation}\label{exist}
	\begin{aligned}
	(\rho-\rhot, \uv-\uvt) & \in C\left(0,+\infty; H^2(\Omega)\right), \\
	\nabla (\rho-\rhot) & \in L^2\left(0,+\infty; H^1(\Omega)\right), \\
	\nabla (\uv-\uvt) & \in L^2\left(0,+\infty; H^2(\Omega)\right),
	\end{aligned}
	\end{equation}
	with the large time behavior 
	\begin{equation}\label{behavior}
	\sup_{x\in \R^3} \abs{ (\rho,\uv)(x,t)-\left(\rho^r, u_1^r, 0,0 \right)(x_1,t) } \rightarrow 0 \quad \text{as } t\rightarrow +\infty.
	\end{equation}
\end{Thm}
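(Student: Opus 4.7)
The overall strategy is a classical energy method applied to the perturbation
\[
 (\phi,\psi)(x,t) := (\rho-\rhot,\ \uv-\uvt)(x,t),
\]
which, by the very construction of the ansatz \cref{ansatz}, belongs to $H^2(\Omega)$ with $\Omega=\R\times\Torus^2$, even though neither $(\rho,\uv)$ nor $(\rhot,\uvt)$ is integrable on $\Omega$. First I would substitute $(\rho,\uv)=(\rhot,\uvt)+(\phi,\psi)$ into \cref{NS} and subtract the equations satisfied by the building blocks of the ansatz: $(\rho^\pm,\uv^\pm)$ solve \cref{NS} exactly, while $(\rhot^r,\ut_1^r)$ solves the 1-d Euler system modulo the viscous correction $\mu\p_1^2\ut_1^r\,\E_1$. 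The resulting quasilinear system for $(\phi,\psi)$ has the schematic form
\begin{align*}
 \p_t\phi+\uvt\dnab\phi+\rhot\,\dv\psi &= -\dv(\phi\psi)-\psi\dnab\rhot+Q_1,\\
 \rhot(\p_t\psi+\uvt\dnab\psi)+\nabla\bigl(p'(\rhot)\phi\bigr)-\mu\lap\psi-(\mu+\lambda)\nabla\dv\psi &= \mathbf{Q}_2,
\end{align*}
where the sources $(Q_1,\mathbf{Q}_2)$ split into three identifiable pieces: (a) interpolation contributions proportional to $\p_1\sigma\,(v^+-v^-)$ and $\p_1\eta\,(\zv^+-\zv^-)$, essentially supported in the rarefaction fan; (b) the smoothing error $\mu\p_1^2\ut_1^r$; and (c) quadratic cross terms between $\sigma,\eta$ and the periodic remainders $v^\pm,\zv^\pm$.

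Before attacking the perturbation system I would quantify these forcing terms. By the standard periodic stability theory near a constant state, the periodic remainders $(v^\pm,\wv^\pm)$ decay exponentially in time in every Sobolev norm up to $H^5(\Torus^3)$. Combined with the elementary rarefaction estimates $\|\p_1^k(\rhot^r,\ut_1^r)\|_{L^p_{x_1}}\lesssim\delta_0(1+t)^{-1+1/p}$ of \cite{Matsumura1986,NYZ}, this renders the pieces of types (a) and (c) small in natural space-time norms with a prefactor $\delta_0\,\e_0\,e^{-\alpha t}$ for some $\alpha>0$, while piece (b) is absorbed by the usual rarefaction argument with smallness of $\delta_0$.

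The core step is then a closed a priori estimate of the form
\begin{equation*}
 \sup_{0\leq t\leq T}\|(\phi,\psi)(\cdot,t)\|_{H^2(\Omega)}^2 + \int_0^T\!\!\Bigl(\|\nabla\phi\|_{H^1(\Omega)}^2+\|\nabla\psi\|_{H^2(\Omega)}^2\Bigr)\,dt \leq C\bigl(\|(\phi,\psi)(\cdot,0)\|_{H^2(\Omega)}^2+\delta_0+\e_0\bigr),
\end{equation*}
obtained by differentiating the perturbation system in $x$ up to second order, testing against the natural symmetrising weights $p'(\rhot)\phi/\rhot$ and $\psi$, and integrating by parts over $\Omega$. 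Because the ansatz keeps oscillating at $|x_1|=\infty$, the usual Sobolev embedding no longer delivers $L^\infty$ control of $(\phi,\psi)$, and I would instead rely on the Gagliardo--Nirenberg type inequality on $\R\times\Torus^2$ introduced in \cite{HY2020} to handle the nonlinear remainders; this is precisely the reason the estimate must be carried to the $H^2$ rather than the $H^1$ level.

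The main obstacle will be producing the density-dissipation term $\int|\nabla\phi|^2\,dt$: since \cref{NS} has no viscosity on $\rho$, this quantity does not appear in the basic energy identity and has to be extracted through an auxiliary equation obtained by testing the momentum equation against a combination involving $\nabla\phi/\rhot$, in the spirit of \cite{LW,LWW}. The borderline cross terms $\p_1\sigma\,\psi\dnab\phi$ and $\p_1\eta\,\psi\dnab\psi$ are then absorbed into the dissipation using the smallness of $\delta_0$ together with the exponential decay of $(v^\pm,\zv^\pm)$. Once the a priori estimate is closed, $H^2$ local-in-time existence for CNS and a standard continuation argument yield the global solution with the regularity \cref{exist}, and the large-time behavior \cref{behavior} follows by combining the time-integrability of $\|\nabla\psi\|_{H^2(\Omega)}^2$ with the Gagliardo--Nirenberg inequality and with the uniform convergence of $(\rhot^r,\ut_1^r)(\cdot,t)$ to the centered rarefaction $(\rho^r,u_1^r)(\cdot,t)$ as $t\to+\infty$.
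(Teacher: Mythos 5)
Your proposal is correct and follows essentially the same route as the paper: the same ansatz-based perturbation $(\phi,\psi)$, exponential decay of the periodic remainders plus the Matsumura--Nishihara rarefaction estimates to control the source terms, $H^2$ a priori estimates on $\Omega=\R\times\Torus^2$ closed via the Gagliardo--Nirenberg type inequality of \cite{HY2020}, density dissipation extracted by testing the momentum equation against $\nabla\phi/\rho$, and local existence plus continuation and the time-integrability of the dissipation for the large-time behavior. The only cosmetic discrepancy is that the viscous correction to the smoothed rarefaction enters with coefficient $2\mu+\lambda$ rather than $\mu$, which does not affect the argument.
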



\vspace{0.3cm}

The rest of this paper is organized as follows. In the next section, we will first introduce some useful lemmas and notations. In Section 3, we will prove the a priori estimates and then complete the proof of the main result. In the last appendix, the exponential decay rates of both the periodic solutions and the error terms produced by the ansatz are obtained.


\section{Preliminaries}

\textbf{Notations.}
For convenience, we denote
\begin{equation}
\delta := \abs{\rhob^+ - \rhob^-} \quad \text{ and } \quad \e := \norm{v_0, \wv_0}_{H^5(\Torus^3)},
\end{equation}
and we use the notations
\begin{equation*}
	\norm{\cdot} := \norm{\cdot}_{L^2(\Omega)}, \quad \norm{\cdot}_l := \norm{\cdot}_{H^l(\Omega)} \quad \text{for } l \geq 1.
\end{equation*}

\begin{Lem}[\cite{Matsumura1986}, Lemma 2.1]\label{Lem-rare}
	The smooth rarefaction wave $ \left( \rhot^r, \ut_1^r \right) $ solving \cref{def-sm-rare} satisfies the following properties.
	\begin{itemize}
		\item[i) ] $ \left(\rhot^r, \ut_1^r\right) $ solves the 1-d isentropic Euler equations;
		
		\item[ii) ] $ \p_1  \rhot^r > 0, \p_1  \ut_1^r>0 $ and there exists a constant $ C>0, $ independent of either $ \delta $ or $ t, $ such that $ \abs{\p_1 ^2\ut_1^r} \leq C \p_1  \ut_1^r $ for all $ t\geq 0 $ and $ x_1 \in\R; $
		
		\item[iii) ] $ \sup\limits_{x_1\in \R} \abs{\left(\rhot^r, \ut_1^r\right)-\left(\rho^r, u_1^r\right)}(x_1,t) \rightarrow 0 $ as $ t\rightarrow +\infty; $
		
		\item[iv) ] For any $ p\in [1,+\infty] $ and $ t\geq 0, $ it holds that
		\begin{equation}
		\begin{aligned}
		& \norm{\nabla_{t,x_1} \left(\rhot^r, \ut_1^r\right)}_{L^p(\R)} \leq C \min\left\{ \delta, \delta^{1/p} (1+t)^{-1+1/p} \right\}, \\
		& \norm{\nabla_{t,x_1}^m \left(\rhot^r, \ut_1^r\right)}_{L^p(\R)} \leq C \min\left\{\delta, (1+t)^{-1} \right\} \quad \text{for } m =2,3,
		\end{aligned}
		\end{equation}
		where $ C>0 $ is independent of either $ \delta $ or $ t. $
	\end{itemize}
\end{Lem}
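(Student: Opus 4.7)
The plan is to reduce every claim to an explicit analysis of the auxiliary Burgers profile $\tilde\omega$, and then transfer the information to $(\tilde\rho^r, \tilde u_1^r)$ through the defining relations \cref{def-sm-rare}. Since the initial datum $\tilde\omega_0(y)=\frac{\lambda_2^-+\lambda_2^+}{2}+\frac{\lambda_2^+-\lambda_2^-}{2}\tanh(y)$ is smooth with strictly positive derivative, the method of characteristics provides a unique global smooth solution, given implicitly by $\tilde\omega(x_1,t)=\tilde\omega_0(y)$ with $x_1=y+t\,\tilde\omega_0(y)$. Implicit differentiation then yields
\begin{equation*}
\p_1 \tilde\omega = \frac{\tilde\omega_0'(y)}{1+t\,\tilde\omega_0'(y)}>0,\qquad \p_1^2 \tilde\omega = \frac{\tilde\omega_0''(y)}{(1+t\,\tilde\omega_0'(y))^3},\qquad \p_t \tilde\omega = -\tilde\omega\,\p_1\tilde\omega,
\end{equation*}
and analogous closed-form expressions for higher derivatives. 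Because $\tilde\omega_0'(y)=\frac{\lambda_2^+-\lambda_2^-}{2}\,\text{sech}^2(y)$ satisfies $|\tilde\omega_0''(y)|\leq 2\,\tilde\omega_0'(y)$, one deduces $|\p_1^m \tilde\omega|\leq C\,\p_1\tilde\omega$ for $m=1,2,3$.

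Next, I transfer these facts to $(\tilde\rho^r,\tilde u_1^r)$. Differentiating the two identities in \cref{def-sm-rare} with respect to $x_1$ yields
\begin{equation*}
\p_1 \tilde u_1^r = \frac{\sqrt{p'(\tilde\rho^r)}}{\tilde\rho^r}\,\p_1\tilde\rho^r,\qquad \p_1 \tilde u_1^r + \frac{p''(\tilde\rho^r)}{2\sqrt{p'(\tilde\rho^r)}}\,\p_1\tilde\rho^r = \p_1 \tilde\omega,
\end{equation*}
so that $\p_1\tilde\rho^r$ and $\p_1\tilde u_1^r$ are positive multiples of $\p_1\tilde\omega$ with smooth, $\tilde\rho^r$-dependent coefficients that are uniformly bounded since $\tilde\rho^r\in[\rhob^-,\rhob^+]$. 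This gives the monotonicity in ii), and a further differentiation together with $|\p_1^2 \tilde\omega|\leq C\,\p_1\tilde\omega$ yields $|\p_1^2 \tilde u_1^r|\leq C\,\p_1 \tilde u_1^r$. For i), substituting the above identities and the Burgers equation $\p_t\tilde\omega+\tilde\omega\,\p_1\tilde\omega=0$ into the Euler system of \cref{Euler}, while using $\tilde u_1^r+\sqrt{p'(\tilde\rho^r)}=\tilde\omega$, verifies the continuity and momentum equations identically.

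For iii), the implicit formula shows that for each fixed $\xi=x_1/t\in(\lambda_2^-,\lambda_2^+)$ the relation $\xi=\tilde\omega_0(y)+y/t$ forces $\tilde\omega(x_1,t)\to\xi=\omega(\xi)$ as $t\to\infty$, with uniform convergence on compact subsets of the three monotonicity regions of $\omega$; the convergence for $(\tilde\rho^r,\tilde u_1^r)$ then follows from the continuity of the map $\tilde\omega\mapsto(\tilde\rho^r,\tilde u_1^r)$ determined by \cref{def-sm-rare}. For iv), the change of variables $x_1\mapsto y$ with Jacobian $1+t\,\tilde\omega_0'(y)$ reduces the $L^p$ norms of $\p_1\tilde\omega$ to
\begin{equation*}
\norm{\p_1\tilde\omega}_{L^p(\R)}^p = \int_\R \frac{|\tilde\omega_0'(y)|^p}{(1+t\,\tilde\omega_0'(y))^{p-1}}\,dy,
\end{equation*}
with analogous formulas (and higher powers in the denominator) for higher derivatives. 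Splitting the integral according to whether $t\,\tilde\omega_0'(y)\leq 1$ or $\geq 1$, and using $\tilde\omega_0'(y)=O(\delta)$ with $\int_\R\tilde\omega_0'\,dy=O(\delta)$, produces the claimed bounds. Time derivatives are controlled by the identity $\p_t\tilde\omega=-\tilde\omega\p_1\tilde\omega$ together with $|\tilde\omega|\leq C$.

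The main technical step is the $L^p$ decay analysis in iv), which requires the change-of-variables representation and a careful case split according to the size of $t\,\tilde\omega_0'(y)$; the remaining assertions reduce to algebraic manipulation of the implicit solution and of \cref{def-sm-rare}. Since the statement is quoted from \cite[Lemma 2.1]{Matsumura1986}, we adopt these arguments and refer the reader there for complete details.
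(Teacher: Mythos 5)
The paper offers no proof of this lemma at all—it is quoted verbatim from \cite[Lemma 2.1]{Matsumura1986}—and your reconstruction (implicit characteristic solution of the Burgers profile, transfer to $(\tilde\rho^r,\tilde u_1^r)$ via the relations \cref{def-sm-rare}, change of variables for the $L^p$ decay) is exactly the standard argument of that reference, and it is correct. The only slightly loose points are that full uniformity over all $x_1\in\R$ in iii) needs a short extra argument near the edges $\xi=\lambda_2^{\pm}$, and that in ii) the cross term $g'(\tilde\rho^r)\,\p_1\tilde\rho^r\,\p_1\tilde\omega$ should be noted to be $O(\delta)\,\p_1\tilde\omega$; both are handled in the cited source, so the proposal matches the paper's (cited) proof in essence.
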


\begin{Lem}\label{Lem-sig-eta}
	The functions $ \sigma $ and $ \eta $ defined in \cref{theta-sig} are smooth and satisfy the following properties.
	\begin{itemize}
		\item[i) ]  $ 0<\sigma(x_1,t), \eta(x_1,t)<1 $ and $ \p_1  \sigma(x_1,t), \p_1  \eta(x_1,t) >0 $ for any $ (x_1,t) \in \R\times [0,+\infty). $
		
		\item[ii) ] For any $ p\in[1,+\infty], $ it holds that
		\begin{align*}
		\norm{\sigma(1-\sigma), \sigma(1-\eta), \eta(1-\sigma), \eta(1-\eta)}_{L^p(\R)} & \leq C (1+t)^{1/p}, \\
		\norm{ \sigma-\eta}_{L^p(\R)} & \leq C \delta (1+t)^{1/p}, \\
		\sup_{t > 0} \norm{\nabla_{t,x_1}^m \left(\sigma,\eta\right)}_{L^p(\R)} & \leq C \quad \text{for } m = 1, 2, 3,
		\end{align*}
		where the constant $ C>0 $ is independent of either $ \delta $ or $ t. $
	\end{itemize}
\end{Lem}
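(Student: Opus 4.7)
The starting observation is that, by \cref{theta-sig}, $ \sigma $ and $ \eta $ are simply the smooth rarefaction profiles $ \rhot^r $ and $ \ut_1^r $ rescaled to take values in $ [0,1] $, both rescalings having scaling factor of order $ \delta $ in view of \cref{rel-const}. Thus item (i) and the product and derivative bounds in (ii) follow quite directly from \cref{Lem-rare}, the $ 1/\delta $ factor cancelling the $ \delta $-smallness in \cref{Lem-rare}(iv); the only nontrivial estimate is the bound on $ \sigma-\eta $.

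For (i), the denominators in \cref{theta-sig} are strictly positive by \cref{rel-const}, so $ \p_1 \sigma, \p_1 \eta > 0 $ follow at once from $ \p_1 \rhot^r, \p_1 \ut_1^r > 0 $ in \cref{Lem-rare}(ii). Combined with the asymptotic limits $ (\rhot^r,\ut_1^r)(x_1,t) \to (\rhob^\pm,\ub_1^\pm) $ as $ x_1\to\pm\infty $---immediate from \cref{def-sm-rare} and the behaviour of $ \ot $---monotonicity then yields $ 0<\sigma,\eta<1 $.

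The product bounds use that $ \sigma(1-\sigma),\ldots,\eta(1-\eta) $ are pointwise $ \leq 1/4 $ and, modulo exponentially small tails, supported in the rarefaction fan. Since $ \ot $ is the Burgers solution starting from a $ \tanh $ profile of transition width $ O(1) $, its non-constant region at time $ t $ has measure $ O(1+\delta t)\leq C(1+t) $, whence the $ C(1+t)^{1/p} $ bound. The derivative estimate is equally straightforward: $ \nabla_{t,x_1}^m \sigma = \delta^{-1}\nabla_{t,x_1}^m \rhot^r $ and likewise for $ \eta $, and both bounds in \cref{Lem-rare}(iv) are $ \leq C\delta $, so dividing by $ \delta $ leaves a quantity $ \leq C $ uniformly in $ t $ and $ p $.

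The delicate estimate, and the main point of the lemma, is $ \|\sigma-\eta\|_{L^p(\R)} \leq C\delta(1+t)^{1/p} $. Setting $ g(\rho):=\sqrt{p'(\rho)}/\rho $ and $ G(\rho):=\int_{\rhob^-}^\rho g(s)\,ds $, the integral relations \cref{rel-key} and \cref{rel-const} give $ \eta = G(\rhot^r)/G(\rhob^+) $, and therefore
\[
\eta - \sigma = \frac{\delta\,G(\rhot^r) - (\rhot^r - \rhob^-)\,G(\rhob^+)}{\delta\,G(\rhob^+)}.
\]
Second-order Taylor expansion of $ G $ about $ \rhob^- $ together with $ G(\rhob^+)= g(\rhob^-)\delta + O(\delta^2) $ shows the numerator is $ O(\delta(\rhot^r-\rhob^-)^2 + \delta^2(\rhot^r-\rhob^-)) = O(\delta^3) $, so after dividing by $ \delta\, G(\rhob^+) \sim \delta^2 $ we obtain $ |\eta-\sigma|\leq C\delta $ pointwise. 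Combined with the $ O(1+t) $ fan-support this yields the required $ L^p $ bound. The main obstacle is therefore this $ \sigma-\eta $ estimate, which exploits the Riemann-invariant/integral tie \cref{rel-key} between $ \ut_1^r $ and $ \rhot^r $; everything else is a rescaling of \cref{Lem-rare} together with the Burgers-fan geometry.
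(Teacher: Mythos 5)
Items (i), the product bounds, and the derivative bounds are fine and follow essentially the paper's route (a $\delta^{-1}$ rescaling of \cref{Lem-rare} together with the $O(1+t)$ width of the fan and exponentially small tails). The genuine gap is the last step of your $\sigma-\eta$ estimate. Your Taylor expansion about $\rhob^-$ only produces the pointwise bound $\abs{\sigma-\eta}\leq C\delta$ (at best $C\delta\,\sigma$ if you keep the factor $\rhot^r-\rhob^-$ in the numerator), and $\sigma-\eta$ is \emph{not} supported in the fan. In the right tail $x_1\gg\lambda_2^+t$ the only decay you have at your disposal is the crude bound $\abs{\sigma-\eta}\leq(1-\sigma)+(1-\eta)\leq Ce^{-c(x_1-\lambda_2^+t)}$ with an $O(1)$, not $O(\delta)$, prefactor: the $\delta$ hidden in $\rhob^+-\rhot^r\leq C(\lambda_2^+-\ot)\leq C\delta e^{-c(x_1-\lambda_2^+t)}$ is exactly cancelled by the $\delta^{-1}$ in the definition of $\sigma$ (and similarly for $\eta$). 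Hence the tail contribution to $\norm{\sigma-\eta}_{L^p(\R)}^p$ is a priori $O(1)$, while the lemma requires it to be $O(\delta^p(1+t))$; even taking the minimum of your two bounds only yields $C\delta^p\log(1/\delta)$ from the tails, which still exceeds $C\delta^p(1+t)$ when $t$ is of order one and $\delta$ is small. So ``pointwise $C\delta$ plus fan support'' does not close the estimate as stated.

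What is missing is a pointwise bound vanishing at \emph{both} end states, namely $\abs{\sigma-\eta}\leq C\delta\,\sigma(1-\sigma)$; once you have it, $\norm{\sigma-\eta}_{L^p(\R)}\leq C\delta\norm{\sigma(1-\sigma)}_{L^p(\R)}\leq C\delta(1+t)^{1/p}$ follows from the product bound you already proved. This is precisely how the paper argues: with $A(\rho)=\int_1^\rho\sqrt{p'(s)}/s\,ds$, the relations \cref{rel-key} and \cref{rel-const} give $\eta=\sigma\,\frac{\int_0^1A'\left(\rhob^-+s(\rhot^r-\rhob^-)\right)ds}{\int_0^1A'\left(\rhob^-+s(\rhob^+-\rhob^-)\right)ds}$, and subtracting and writing the difference of the two integrals as an integral of $A''$ over the segment joining $\rhot^r$ and $\rhob^+$ extracts the factor $\rhob^+-\rhot^r=\delta(1-\sigma)$, so that $\sigma-\eta=\sigma(1-\sigma)b(\rhot^r)$ with $\norm{b}_{L^\infty}\leq C\delta$. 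Your own computation is one step away from this: in the numerator $\delta G(\rhot^r)-(\rhot^r-\rhob^-)G(\rhob^+)$ you must keep track of the vanishing at $\rhot^r=\rhob^+$ as well, i.e.\ factor out both $\rhot^r-\rhob^-$ and $\rhob^+-\rhot^r$ (equivalently, expand the difference quotient of $G$ between the two points rather than bounding the second-order remainders separately by $C\delta^3$). With that modification your approach coincides with the paper's and the lemma follows.
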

\begin{proof}
	Let $ C $ denote a constant independent of either $ \delta $ or $ t. $
	From \cref{Lem-rare}, it is direct to prove i). And for ii), we prove only $ \sigma(1-\eta) $ and $ \sigma-\eta, $ since the proof of others is similar and the proof of the derivatives is straightforward. 
	
	It follows from \cref{rel-const} that $ C^{-1} \delta \leq \abs{\ub_1^+ -\ub_1^-} \leq C \delta. $ Then by \cref{def-sm-rare}, one has that
	\begin{equation*}
	\begin{aligned}
	& \p_1  \ut_1^r = \frac{\sqrt{p'(\rhot^r)}}{\rhot^r} \p_1  \rhot^r \geq C \p_1  \rhot^r, \\
	& \p_1  \ut_1^r = \left(1+ \frac{p''(\rhot^r) \rhot^r}{2p'(\rhot^r)} \right)^{-1} \p_1  \ot \leq C \p_1  \ot,
	\end{aligned}
	\end{equation*}
	which yields that
	\begin{equation*}
	0< \rhot^r-\rhob^- = \int_{-\infty}^{x_1} \p_y \rhot^r(y,t) dy \leq C \int_{-\infty}^{x_1} \p_y \ot(y,t) dy \leq C \left( \ot- \lambda_2^- \right),
	\end{equation*}
	and similarly,
	\begin{equation*}
	0< \rhob^+- \rhot^r \leq C \left(\lambda_2^+ - \ot  \right).
	\end{equation*}
	Thus, one has that
	\begin{align*}
	\norm{\sigma(1-\eta)}_{L^p(\R)}& \leq C \delta^{-2} \norm{(\rhot^r-\rhob^-)(\ut_1^r-\ub_1^+)}_{L^p(\R)} \\
	& \leq C \delta^{-2} \norm{(\rhot^r-\rhob^-)(\rhot^r-\rhob^+)}_{L^p(\R)} \\
	& \leq C \delta^{-2} \norm{\left(\ot-\lambda_2^- \right) \left(\ot-\lambda_2^+\right)}_{L^p(\R)} \\
	& \leq C (1+t)^{1/p},
	\end{align*}
	where the last inequality can be derived from the characteristic curve method and the fact that $ \abs{ \lambda_2^+ - \lambda_2^- } \leq C\delta. $
	
	Then we show the proof of $ \abs{\sigma-\eta}. $
	Denote $ A(\rho) := \int_{1}^{\rho} \frac{\sqrt{p'(s)}}{s} ds. $	
	It can follow from \cref{rel-const} and \cref{rel-key} that
	\begin{align*}
	\sigma-\eta & = \sigma \left[ 1- \frac{\int_0^1 A'\left( \rhob^- + s (\rhot^r-\rhob^-) \right) ds }{\int_0^1 A'\left( \rhob^- + s (\rhob^+-\rhob^-) \right) ds} \right] \\
	& = \sigma(1-\sigma) b(\rhot^r),
	\end{align*}
	where \begin{equation*}
	b\left(\rhot^r\right) = \left(\rhob^+ - \rhob^-\right) \frac{\int_0^1 \int_0^1 A'' \left( \rhob^- + s(\rhot^r-\rhob^-) + r s (\rhob^+-\rhot^r) \right) dr s ds}{\int_0^1 A'\left( \rhob^- + s (\rhob^+ - \rhob^-) \right) ds},
	\end{equation*}
	which satisfies that $ \norm{b\left(\rhot^r\right)}_{L^\infty(\R\times[0,+\infty))} \leq C \delta. $ 
	Thus, one has that
	\begin{align*}
	\norm{\sigma-\eta}_{L^p(\R)} \leq C \delta \norm{\sigma(1-\sigma)}_{L^p(\R)} \leq C \delta (1+t)^{1/p},
	\end{align*}
	which finishes the proof.

\end{proof}

\vspace{0.3cm}

\begin{Lem}\label{Lem-per}
	If $ \e = \norm{v_0,\wv_0}_{H^5(\Torus^3)} $ is small enough, then the global periodic solution $ \left(\rho^\pm, \uv^\pm\right) $ to the problem \cref{NS}, \cref{ic-per}$ ^\pm $ exists and satisfies $$ \int_{\Torus^3} \left(\rho^\pm - \rhob^\pm\right)(x,t) dx = 0, \quad \int_{\Torus^3} \left(\rho^\pm \uv^\pm - \rhob^\pm \uvb^\pm\right)(x,t) dx = 0 \qquad \forall t\geq 0. $$ 
	Moreover, the periodic perturbations \cref{vw-pm} and \cref{z-pm} satisfy that
	\begin{equation}\label{decay-periodic}
	\norm{ \left(v^\pm, \zv^\pm \right)}_{W^{3,\infty}(\Torus^3)}(t) = 
	\norm{ \left(\rho^\pm, \uv^\pm \right) - \left(\rhob^\pm, \ub_1^\pm \E_1 \right) }_{W^{3,\infty}(\Torus^3)}(t) \leq C \e e^{-2\alpha t}, \quad t\geq 0,
	\end{equation}
	where the constants $ C>0 $ and $ \alpha>0 $ are independent of either $ t $ or $ \e. $
\end{Lem}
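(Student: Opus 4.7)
The plan is to carry out a classical perturbative analysis around the constant equilibrium $(\rhob^\pm,\ub_1^\pm\E_1)$ on the torus $\Torus^3$. Dropping the $\pm$ superscripts, set $v=\rho-\rhob$ and $\wv=\rho\uv-\rhob\ub_1\E_1$, so that $\zv=\uv-\ub_1\E_1=(\wv-v\ub_1\E_1)/\rho$. Substituting into \cref{NS} yields a quasilinear symmetric hyperbolic-parabolic system for $(v,\zv)$ whose linearization at the constant state carries the full viscosity $\mu\lap\zv+(\mu+\lambda)\nabla\dv\zv$; the remaining nonlinear terms are at least quadratic in $(v,\zv)$ provided $\norm{v}_{L^\infty(\Torus^3)}$ is small enough to keep $\rho$ uniformly bounded away from zero and infinity.

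First I would establish conservation of averages. Integrating the continuity and momentum equations over $\Torus^3$, every term is a divergence of a periodic quantity and therefore vanishes, giving $\frac{d}{dt}\int_{\Torus^3} v\,dx=0$ and $\frac{d}{dt}\int_{\Torus^3} \wv\,dx=0$. Combined with the zero-mean hypothesis \cref{zero-ave}, this is exactly the announced identity for all $t\ge 0$. Note that $\zv$ itself is not exactly zero-mean, but the identity $\wv=\rhob\zv+v\ub_1\E_1+v\zv$ gives $\bigl|\int_{\Torus^3}\zv\,dx\bigr|\le C\norm{v}_{L^2}\norm{\zv}_{L^2}$, so the mean of $\zv$ is quadratically small in the perturbation.

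Next I would run a Matsumura--Nishida style energy analysis in $H^5(\Torus^3)$. Setting $N(T):=\sup_{0\le t\le T}\norm{(v,\zv)}_{H^5(\Torus^3)}(t)$ and working on the region $N(T)\le \eta$ for some fixed small $\eta$, I multiply the $v$-equation by $p'(\rhob)v/\rhob$ and the $\zv$-equation by $\zv$, apply $x$-derivatives up to order five, and sum, to obtain an identity of the form
\begin{equation*}
	\frac{d}{dt}\mathcal{E}(t)+\mu\norm{\nabla\zv}_{H^5}^2+(\mu+\lambda)\norm{\dv\zv}_{H^5}^2\le C\,N(T)\bigl(\norm{\nabla v}_{H^4}^2+\norm{\nabla\zv}_{H^5}^2\bigr),
\end{equation*}
where $\mathcal{E}(t)\simeq\norm{(v,\zv)}_{H^5}^2$. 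Because the continuity equation carries no viscosity, a Kawashima-type cross estimate, namely testing the momentum equation against $\nabla v$ and its derivatives, is needed to generate the missing dissipation $\norm{\nabla v}_{H^4}^2$ on the left. Smallness of $\e$ (hence of $\eta$) absorbs the right-hand side and closes the a priori bound $N(T)\le C\e$, which together with local existence yields a global solution by continuation.

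Once dissipation of $\norm{(\nabla v,\nabla\zv)}_{H^4}^2$ is secured, exponential decay follows from the zero-mean property. Since $v$ has zero mean and the mean of $\zv$ is quadratically small as noted above, Poincar\'e's inequality on $\Torus^3$ applied to $v$ and to $\zv$ minus its mean gives $\norm{(v,\zv)}_{L^2}\le C\norm{(\nabla v,\nabla\zv)}_{L^2}+C\,N(T)\norm{(v,\zv)}_{L^2}$, and similarly for tangential derivatives, so that $\mathcal{E}(t)$ is itself controlled by the full dissipation. The energy inequality above then upgrades to $\frac{d}{dt}\mathcal{E}(t)+4\alpha\mathcal{E}(t)\le 0$ for some $\alpha>0$ depending only on $\mu,\lambda,\rhob,\ub_1$, whence $\norm{(v,\zv)}_{H^5}(t)\le C\e\,e^{-2\alpha t}$; the Sobolev embedding $H^5(\Torus^3)\hookrightarrow W^{3,\infty}(\Torus^3)$, valid since $5>3/2+3$, then yields the stated $W^{3,\infty}$ decay. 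The main obstacle is the absence of dissipation in the continuity equation; the Kawashima-type cross estimate is the one genuinely nontrivial ingredient, everything else being standard for the isentropic CNS near equilibrium on a bounded domain.
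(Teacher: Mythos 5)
Your proposal is correct and follows essentially the same route as the paper's appendix proof: conservation of the averages by integrating the conservative system over $\Torus^3$, closed energy estimates in high Sobolev norms with a cross (Kawashima-type) estimate supplying the dissipation $\norm{\nabla v}^2$ missing from the continuity equation, and the Poincar\'{e} inequality on the torus (exploiting the zero averages) to convert the dissipation into a differential inequality $E'(t)+cE(t)\le 0$, followed by Sobolev embedding into $W^{3,\infty}(\Torus^3)$. The only cosmetic difference is that the paper applies Poincar\'{e} to the exactly mean-free momentum perturbation $\wv$ and recovers $\zv$ from it, whereas you apply it to $\zv$ minus its quadratically small mean; both devices close the estimate in the same way.
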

By using the energy method with the aid of the Poincar\'{e} inequality on $ \Torus^3, $ the proof of \cref{Lem-per} is standard and is placed in the appendix.

Although the ansatz \cref{ansatz} is not a solution to \cref{NS}, its error terms
\begin{equation}\label{source}
\begin{aligned}
h_0 & := \p_t \rhot + \dv(\rhot \uvt), \\
\h = \left(h_1,h_2,h_3\right)
& := \p_t \left(\rhot \uvt\right) + \dv \left(\rhot \uvt \otimes \uvt\right) + \nabla p(\rhot) - \mu \lap \uvt - (\mu+\lambda) \nabla \dv \uvt \\
& = h_0 \uvt + \rhot\p_t\uvt + \rhot\uvt\dnab\uvt + \nabla p(\rhot) - \mu \lap \uvt - (\mu+\lambda) \nabla \dv \uvt
\end{aligned}
\end{equation}
decay exponentially fast with respect to time. More precisely, it holds that

\begin{Lem}\label{Lem-h}
	Under the assumptions of \cref{Thm}, the error terms \cref{source} satisfy that
	\begin{equation}\label{est-h}
	\begin{aligned}
	\norm{h_0}_{W^{2,p}(\Omega)} + \norm{\h+\left(2\mu+\lambda\right)\p_1 ^2 \ut_1^r \E_1}_{W^{1,p}(\Omega)} & \leq C\e e^{-\alpha t}, \quad p\in[1,+\infty],
	\end{aligned}
	\end{equation}
	where $ \alpha>0 $ is the constant in \cref{Lem-per} and the constant $ C>0 $ is independent of either $ \delta, \e $ or $ t. $
\end{Lem}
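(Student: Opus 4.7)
The plan is to decompose the ansatz as $\rhot = \rhot^r + \ovl{v}$ and $\uvt = \ut_1^r\E_1 + \ovl{\zv}$ with $\ovl{v} := (1-\sigma)v^- + \sigma v^+$ and $\ovl{\zv} := (1-\eta)\zv^- + \eta\zv^+$, then substitute into \cref{source} and exploit three cancellations. First, the Euler identity $\p_t\rhot^r + \p_1(\rhot^r\ut_1^r) = 0$ and its momentum analogue (\cref{Lem-rare}) wipe out the leading rarefaction contributions to the hyperbolic fluxes. Second, because $\ut_1^r$ depends only on $x_1$, one computes $-\mu\lap(\ut_1^r\E_1) - (\mu+\lambda)\nabla\dv(\ut_1^r\E_1) = -(2\mu+\lambda)\p_1^2\ut_1^r\E_1$, which is precisely the extra term added on the left-hand side of \cref{est-h}. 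Third, the $\pm$ conservation laws $\p_t v^\pm + \dv \wv^\pm = 0$ and their momentum analogues, weighted by $(1-\sigma,\sigma)$ and $(1-\eta,\eta)$, absorb the contributions of $\p_t v^\pm$, $\p_t \zv^\pm$ in $\p_t\ovl v$ and $\p_t\ovl\zv$.

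After these cancellations, both $h_0$ and $\h+(2\mu+\lambda)\p_1^2\ut_1^r\E_1$ reduce to a finite sum of products $f(x_1,t)\,g(x,t)$, where $f$ is smooth and built from $\sigma,\eta,\rhot^r,\ut_1^r$ and their derivatives, while $g$ is polynomial in $v^\pm,\zv^\pm$ and their derivatives of order at most three. By \cref{Lem-per}, $\norm{g}_{L^\infty(\Omega)} \leq C\e e^{-2\alpha t}$ in every case; by \cref{Lem-rare}, any $f$ involving a derivative of $\rhot^r$ or $\ut_1^r$ lies in $L^p(\R)$ with norm $O(\delta^{1/p}(1+t)^{-1+1/p})$ uniformly in $t$; and by \cref{Lem-sig-eta}, the remaining coefficients of the form $\sigma(1-\sigma), \sigma(1-\eta), \eta(1-\sigma), \eta(1-\eta)$ and $\sigma-\eta$ lie in $L^p(\R)$ with norm at most $C(1+t)^{1/p}$, the small factor $\delta$ being pulled out using $\abs{\rhob^+-\rhob^-},\abs{\ub_1^+-\ub_1^-}\leq C\delta$ and the $\sigma-\eta$ estimate. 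The Fubini-type inequality $\norm{fg}_{L^p(\Omega)} \leq \abs{\Torus^2}^{1/p}\norm{f}_{L^p(\R)}\norm{g}_{L^\infty(\Omega)}$ then bounds each term by $C\e(1+t)^{1/p} e^{-2\alpha t} \leq C\e e^{-\alpha t}$, since the polynomial growth is absorbed into the exponential. Higher derivatives of $h_0$ (up to order $2$) and $\h$ (up to order $1$) are handled identically, because at most two derivatives ever fall on $g$, well within the $W^{3,\infty}$ control provided by \cref{Lem-per}.

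The main obstacle is a careful regrouping needed to secure $L^p$-integrability in the unbounded $x_1$-direction rather than any analytic difficulty. A naive decomposition of $\p_t\rhot+\dv(\rhot\uvt)$ produces terms like $\rhot^r\dv\ovl{\zv}$ and $\ut_1^r\p_1\ovl v$ whose $x_1$-factors tend to the nonzero constants $\rhob^\pm,\ub_1^\pm$ at $\pm\infty$, so that they fail to be $L^p(\Omega)$ in isolation. Integrability is restored only after combining them with $-(1-\sigma)\dv\wv^--\sigma\dv\wv^+$ produced by $\p_t\ovl v$ via the $\pm$ mass conservation laws: for example, the coefficient of $\dv\zv^-$ collapses to $\sigma(1-\sigma)(\rhob^+-\rhob^-)+(\sigma-\eta)\rhot^r$, which is $O(\delta)$ and $L^p(\R)$-integrable by \cref{Lem-sig-eta}, and the coefficient of $\p_1 v^-$ collapses to $(1-\sigma)\eta(\ub_1^+-\ub_1^-)$. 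The analogous regrouping for the momentum equation is more elaborate but structurally identical: the convective quantities $\rhot\p_t\uvt+\rhot\uvt\dnab\uvt+\nabla p(\rhot)$ are paired against the $\pm$ momentum equations so that the only surviving viscous mismatch is precisely $(2\mu+\lambda)\p_1^2\ut_1^r\E_1$. The quadratic remainders $\dv(\ovl v\ovl\zv)$, $\ovl v\dnab\ovl\zv$ and their momentum counterparts split into four bilinear pieces each; after the same kind of subtraction, the resulting $g$'s become quadratic in $(v^\pm,\zv^\pm)$ and decay as $e^{-4\alpha t}$ in $L^\infty$, more than enough to absorb the $(1+t)^{1/p}$ coming from the new coefficients.
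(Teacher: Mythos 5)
Your proposal is correct and follows essentially the same route as the paper's appendix proof: cancel the leading terms using the Navier--Stokes equations satisfied by the periodic solutions $(\rho^\pm,\uv^\pm)$ and the Euler equations satisfied by $(\rhot^r,\ut_1^r)$, identify the viscous mismatch $-(2\mu+\lambda)\p_1^2\ut_1^r\E_1$, and bound every remainder as a product of an exponentially decaying periodic factor (\cref{Lem-per}) and an $x_1$-integrable coefficient (\cref{Lem-rare}, \cref{Lem-sig-eta}) via $\norm{fg}_{L^p(\Omega)}\leq \norm{f}_{L^p(\R)}\norm{g}_{L^\infty(\Omega)}$. Your regrouped coefficients check out algebraically (e.g.\ $\rhot^r(1-\eta)-(1-\sigma)\rhob^-=\sigma(1-\sigma)(\rhob^+-\rhob^-)+(\sigma-\eta)\rhot^r$ for $\dv\zv^-$) and differ from the paper's remainders $R_1$--$R_{10}$ only in bookkeeping: the paper groups around the full periodic solutions weighted by $(1-\sigma,\sigma)$ instead of subtracting the perturbation equations $\p_t v^\pm+\dv\wv^\pm=0$.
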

The proof of \cref{Lem-h} is similar to \cite[Lemma 2.3]{HY2020}, which is based on Lemmas \ref{Lem-sig-eta} and \ref{Lem-per}. We still place it in the appendix for brevity.

\vspace{0.3cm}

As indicated in \cite{HY2020}, the functions which are integrable on the domain $ \Omega = \R\times\Torus^2 $ without any additional boundary conditions do not satisfy the 3-d Gagliardo-Nirenberg (G-N) inequalities in general (any 1-d function $ f(x_1) \in C_c^\infty(\R), $ which is periodic with respect to $ x_2 $ and $ x_3, $ is a counterexample). To solve the problem \cref{NS}, \cref{ic} on the domain $ \Omega = \R\times\Torus^2, $ we need the following G-N type inequality.

\begin{Lem}[\cite{HY2020}, Theorem 1.4]\label{Lem-GN}
	Assume that $ u(x) $ is in the $ L^q(\Omega) $ space with $ \nabla^m u \in L^r(\Omega), $ where $ 1\leq q,r\leq +\infty $ and $ m\geq 1, $ and $ u $ is periodic with respect to $ x_2 $ and $ x_3. $ Then there exists a decomposition $ u(x) = \sum\limits_{k=1}^{3} u^{(k)}(x) $ such that each $ u^{(k)} $ satisfies the $ k $-dimensional G-N inequality,
	\begin{equation}\label{G-N-type-1}
		\norm{\nabla^j u^{(k)}}_{L^p(\Omega)} \leq C \norm{\nabla^{m} u}_{L^{r}(\Omega)}^{\theta_k} \norm{u}_{L^{q}(\Omega)}^{1-\theta_k},
	\end{equation}
	where $ 0\leq j< m $ is any integer and $ 1\leq p \leq +\infty $ is any number, satisfying 
	$$ \frac{1}{p} = \frac{j}{k} + \left(\frac{1}{r}-\frac{m}{k}\right) \theta_k + \frac{1}{q}\left(1-\theta_k\right) \quad \text{with} \quad \frac{j}{m} \leq \theta_k \leq 1. $$
	Moreover, it holds that 
	\begin{equation}\label{G-N-type-2}
	\norm{\nabla^j u}_{L^p(\Omega)} \leq C \sum_{k=1}^3 \norm{\nabla^{m} u}_{L^{r}(\Omega)}^{\theta_k} \norm{u}_{L^{q}(\Omega)}^{1-\theta_k}.
	\end{equation}
	The constants $ C>0 $ in \cref{G-N-type-1,G-N-type-2} are independent of $ u. $
\end{Lem}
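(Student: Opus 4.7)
The plan is to establish the decomposition via an orthogonal Fourier-type projection in the transverse periodic variables, reducing matters to three separate Gagliardo--Nirenberg inequalities on $\R$, $\R\times\Torus$, and $\R\times\Torus^2$, respectively. Concretely, for $u$ periodic in $x_2,x_3$, I would expand
\begin{equation*}
u(x_1,x_2,x_3) = \sum_{(n_2,n_3)\in\mathbb{Z}^2} \hat{u}_{n_2,n_3}(x_1)\, e^{2\pi i (n_2 x_2 + n_3 x_3)}
\end{equation*}
and set $u^{(1)}=\hat{u}_{0,0}(x_1)$, $u^{(2)}=\sum_{n_2\neq 0}\hat{u}_{n_2,0}(x_1)e^{2\pi i n_2 x_2}+\sum_{n_3\neq 0}\hat{u}_{0,n_3}(x_1)e^{2\pi i n_3 x_3}$, and $u^{(3)}=\sum_{n_2,n_3\neq 0}\hat{u}_{n_2,n_3}(x_1)e^{2\pi i (n_2 x_2+n_3 x_3)}$. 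By Plancherel/Minkowski, these projections are bounded on $L^p$ and commute with $\nabla^m$, so the regularity hypotheses pass to each piece.

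The next step is to verify that each $u^{(k)}$ satisfies the required $k$-dimensional Gagliardo--Nirenberg inequality. For $u^{(1)}$, which depends only on $x_1$, the standard one-dimensional G--N on $\R$ applies to $u^{(1)}\in L^q(\R)$, and Jensen's inequality gives $\norm{u^{(1)}}_{L^q(\R)}\leq \norm{u}_{L^q(\Omega)}$ and $\norm{\p_1^m u^{(1)}}_{L^r(\R)}\leq \norm{\nabla^m u}_{L^r(\Omega)}$; lifting back to $\Omega$ costs nothing because $\Torus^2$ has unit volume. For $u^{(2)}$, each of its two summands is a function on $\R\times\Torus$ with zero mean over the transverse $\Torus$-factor, so I would prove a two-dimensional G--N on $\R\times\Torus$ for such mean-zero functions. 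For $u^{(3)}$, which has vanishing mean in both $x_2$ and $x_3$, I would prove a three-dimensional G--N on $\R\times\Torus^2$ in the same spirit.

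The main obstacle is precisely the $k$-dimensional G--N inequality on $\R\times\Torus^{k-1}$ for transverse-mean-zero functions, $k=2,3$, since the usual Euclidean proof relies on decay at infinity in every variable. To handle this, I would combine the Poincar\'{e} inequality in the transverse torus (valid because of the zero-mean constraint: the smallest nonzero Fourier frequency yields $\norm{u^{(k)}}_{L^2}\leq C\norm{\nabla_{x_2,x_3}u^{(k)}}_{L^2}$, and analogous $L^p$ versions follow by interpolation between Poincar\'{e}--Sobolev on each $\{x_1\}\times\Torus^{k-1}$ slice) with the one-dimensional G--N in the $x_1$ variable. The argument is: for each fixed $x_1$, control the $L^p(\Torus^{k-1})$ norm of $u^{(k)}$ and its transverse derivatives by the $(k-1)$-dimensional Sobolev inequality on $\Torus^{k-1}$ (using mean zero), then integrate the resulting 1-D inequality in $x_1$ and optimize by H\"{o}lder and the one-dimensional G--N; the resulting exponents match those dictated by the $k$-dimensional scaling relation $\tfrac{1}{p}=\tfrac{j}{k}+\theta_k(\tfrac{1}{r}-\tfrac{m}{k})+(1-\theta_k)\tfrac{1}{q}$.

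Finally, \cref{G-N-type-2} follows from \cref{G-N-type-1} by the triangle inequality $\norm{\nabla^j u}_{L^p(\Omega)}\leq\sum_{k=1}^3\norm{\nabla^j u^{(k)}}_{L^p(\Omega)}$ and the bounds on each piece. I expect the Poincar\'{e}-plus-slicing step in the previous paragraph to be the most delicate, especially in verifying the full admissible range $j/m\leq\theta_k\leq 1$; the endpoints and the borderline $p=\infty$ case may require a separate argument via the Besov-type embedding $W^{m,r}\hookrightarrow L^\infty$ on $\Torus^{k-1}$ for $m$ large enough, combined again with the one-dimensional G--N on $\R$.
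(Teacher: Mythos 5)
A preliminary remark: this paper does not prove \cref{Lem-GN} at all; it is quoted verbatim from \cite{HY2020}, Theorem 1.4, so there is no in-paper proof to compare against. Judged on its own terms, your reduction step is sound and is essentially the strategy of the cited work: decompose $u$ by partial averages over the transverse torus directions (your Fourier-mode grouping is just a regrouping of such averages — $u^{(1)}$ is the full transverse average, and $u^{(2)},u^{(3)}$ are differences of partial averages), note that these operators are $L^p$-contractions for every $1\le p\le\infty$ by Jensen/Minkowski (Plancherel is neither needed nor sufficient here) and commute with $\nabla$, and reduce to a $k$-dimensional G--N inequality on $\R\times\Torus^{k-1}$ for functions with zero mean in the periodic variables; the sum defining $u^{(2)}$ causes no harm since each summand is bounded by the norms of $u$ itself and one concludes by the triangle inequality.

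The genuine gap is that the heart of the lemma — the $k$-dimensional G--N inequality on $\R\times\Torus^{k-1}$ for transverse-mean-zero functions, over the full range $0\le j<m$, $1\le p,q,r\le\infty$, $j/m\le\theta_k\le 1$ — is asserted rather than proved, and the sketch you give does not go through as stated. ``Apply the $(k-1)$-dimensional Sobolev/Poincar\'e inequality on each slice $\{x_1\}\times\Torus^{k-1}$, then integrate in $x_1$ and use the 1-d G--N'' produces, by itself, anisotropic mixed-norm bounds whose exponents do not automatically recombine into the isotropic relation $\tfrac1p=\tfrac jk+\theta_k(\tfrac1r-\tfrac mk)+(1-\theta_k)\tfrac1q$. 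Already in the simplest case ($k=2$, $j=0$, $m=1$, $q=r=2$, $p=4$) one needs an extra ingredient your outline omits: a control of $\sup_{x_1}\norm{u^{(k)}(x_1,\cdot)}_{L^2(\Torus)}$ via the fundamental theorem of calculus in $x_1$ (using integrability on $\R$), interlaced with the mean-zero/FTC argument in the periodic variables — i.e., a genuine adaptation of Nirenberg's variable-by-variable proof in which the zero transverse mean supplies the vanishing points that decay at infinity supplies in the Euclidean case. Carrying this out uniformly in $(j,m,p,q,r)$, including $p=\infty$ and the endpoint values of $\theta_k$, is precisely the content of the cited theorem, so as written your proposal proves the easy reduction but leaves the substantive inequality unestablished.
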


\vspace{0.3cm}

\section{Reformulation of the problem and proof}

We first show the equations satisfied by the perturbation terms, 
\begin{equation}\label{def-pertur}
	\phi := \rho-\rhot \quad \text{ and } \quad \psi= \left( \psi_1,\psi_2,\psi_3\right) := \uv-\uvt.
\end{equation}
It follows from \cref{NS} and \cref{source} that
\begin{align}
& \p_t \phi + \rho \dv \psi + \uv \cdot \nabla \phi + \phi \dv \uvt + \nabla \rhot \cdot \psi = - h_0, \label{eq-pertur-1} \\
& \rho \p_t \psi + \rho \uv \cdot \nabla \psi + \rho \psi \cdot \nabla \uvt + p'(\rho) \nabla \phi + \Big(p'(\rho) - \frac{\rho}{\rhot} p'(\rhot)\Big) \nabla \rhot \notag \\ 
& \qquad\qquad\qquad - \mu \lap \psi - (\mu+\lambda) \nabla \dv \psi = \f - \phi \g + \left(2\mu+\lambda\right)\p_1 ^2 \ut_1^r \E_1, \label{eq-pertur-2}
\end{align}
where 
\begin{align*}
\f = (f_1,f_2,f_3)^T & = h_0 \uvt - \h-\left(2\mu+\lambda\right)\p_1 ^2 \ut_1^r \E_1, \\
\g = (g_1,g_2,g_3)^T & = \frac{1}{\rhot} \left[ \mu\lap \uvt + (\mu+\lambda) \nabla \dv \uvt + \h - h_0 \uvt \right], \\
& = \frac{1}{\rhot} \big[ \mu\lap (\uvt-\ut_1^r \E_1) + (\mu+\lambda) \nabla \dv (\uvt-\ut_1^r \E_1) - \f \big],
\end{align*}
which satisfy from \cref{Lem-h} that
\begin{equation}\label{g-f}
\begin{aligned}
\norm{\f}_{W^{1,p}(\Omega)} & \leq C\e e^{-\alpha t} \quad \forall p\in[1,+\infty], \\
\norm{\g}_{W^{1,\infty}(\Omega)} & \leq C\e e^{-\alpha t}.
\end{aligned}
\end{equation}
From \cref{ic,z-pm,ansatz}, one has that
\begin{align*}
\phi(x,0) & = \rho(x,0)- \rhot^r(x_1,0) - v_0(x)(1-\sigma(x_1,0)) - v_0(x) \sigma(x_1,0) = 0, \\
\psi(x,0) & = \frac{\rhot^r(x_1,0) \ut_1^r(x_1,0) \E_1 + \wv_0(x)}{\rhot^r(x_1,0)+v_0(x)} -  \ut_1^r(x_1,0) \E_1 \\
& \quad - \frac{\wv_0(x)- \ub_1^-v_0(x) \E_1}{\rhob^- +v_0(x)} (1-\eta(x_1,0)) - \frac{\wv_0(x)-\ub_1^+ v_0(x)\E_1}{\rhob^+ +v_0(x)}\eta(x_1,0) \\
& = \left( \frac{1}{\rhot^r(x_1,0)+v_0(x)} - \frac{1-\eta(x_1,0)}{\rhob^-+v_0(x)} - \frac{\eta(x_1,0)}{\rhob^+ +v_0(x)} \right) \wv_0(x) \\
&\quad - \left( \frac{\ut_1^r(x_1,0)}{\rhot^r(x_1,0)+v_0(x)} - \ub_1^- \frac{1-\eta(x_1,0)}{\rhob^-+v_0(x)} - \ub_1^+ \frac{\eta(x_1,0)}{\rhob^+ +v_0(x)} \right) v_0(x) \E_1 \\
& = \frac{\rhob^+-\rhob^-}{\rhot^r(x_1,0) + v_0(x)} \left[ \frac{(\eta(1-\sigma))(x_1,0)}{\rhob^+ + v_0(x)} - \frac{(\sigma(1-\eta))(x_1,0)}{\rhob^- + v_0(x)} \right] \wv_0(x) \\
& \quad - \frac{\rhob^+-\rhob^-}{\rhot^r(x_1,0) + v_0(x)} \left[ \frac{\ub_1^- (\eta(1-\sigma))(x_1,0)}{\rhob^+ + v_0(x)} - \frac{\ub_1^+ (\sigma(1-\eta))(x_1,0)}{\rhob^- + v_0(x)} \right] v_0(x) \E_1 \\
& := q_1(x) \wv_0(x) - q_2(x) v_0(x) \E_1.
\end{align*} 
If $ \delta_0\leq 1 $ and $ \e_0 $ is small enough, it follows from \cref{Lem-sig-eta} that
\begin{equation}\label{psi0}
\norm{\psi(\cdot,0)}_2 \leq  \norm{\wv_0, v_0}_{W^{2,\infty}} \norm{q_1,q_2}_2 \leq C_0\delta \e \leq C_0\e,
\end{equation}
where $ C_0>0 $ is independent of $ \e $ or $ \delta. $

\vspace{0.3cm}

Now we aim to solve the problem \cref{eq-pertur-1,eq-pertur-2} with the initial data
\begin{equation}\label{ic-pertur}
\begin{aligned}
\phi(x,0) & = \phi_0(x) = 0, \\
\psi(x,0) & = \psi_0(x) = q_1(x) \wv_0(x) - q_2(x) v_0(x) \E_1,
\end{aligned}
\end{equation}
where $ \psi_0 $ satisfies \cref{psi0}.
The proof consists of the a priori estimates (\cref{Thm-apriori}) and the local existence (\cref{Prop-local}). For $ T>0, $ denote
\begin{equation}\label{apriori}
	N(T) := \Big\{ \sup_{t\in(0,T)} \norm{\phi,\psi}_2^2 + \int_0^T \left(\norm{\nabla \phi}_1^2 + \norm{\nabla\psi}_2^2\right) dt \Big\}^{\frac{1}{2}}.
\end{equation}


\begin{Thm}\label{Thm-apriori}
Under the assumptions of \cref{Thm}, let $ T>0 $ and
$$ (\phi,\psi) \in C\left(0,T; H^2(\Omega)\right) \quad \text{ with } \quad \nabla\phi \in L^2\left(0,T; H^1(\Omega)\right), \quad \nabla \psi \in L^2\left(0,T; H^2(\Omega)\right),  $$
solve the problem \cref{eq-pertur-1}, \cref{eq-pertur-2} 
with initial data $ (\phi_0,\psi_0)(x) \in H^2(\Omega). $  
Then there exist $ \e_0>0, \delta_0>0 $ and $ \nu_0>0 $ such that if $ \e <\e_0, \delta<\delta_0 $ and $ N(T)<\nu_0, $ then 
\begin{equation}\label{est-apriori}
N(T)^2 \leq C\norm{ \phi_0,\psi_0 }_2^2 + C(\e +\delta^{\frac{1}{4}}),
\end{equation}
where the constant $ C>0 $ is independent of $ \e, \delta $ or $ N(T). $
\end{Thm}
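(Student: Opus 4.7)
The plan is a standard $H^2$ energy estimate for the perturbation system \cref{eq-pertur-1,eq-pertur-2} on the slab $\Omega=\R\times\Torus^2$, closed by a continuation argument using the a priori smallness $N(T)<\nu_0$. Three ingredients drive the argument: the exponential-in-time decay of the ansatz errors given by \cref{Lem-h,g-f} (which is what produces the $C\e$ contribution on the right of \cref{est-apriori}), the sign property $\p_1\ut_1^r>0$ of \cref{Lem-rare} (which generates the positive rarefaction dissipation $\int_\Omega \p_1\ut_1^r\,|\psi|^2\,dx$ after symmetrizing the $p$-system part), and the anisotropic Gagliardo--Nirenberg inequality \cref{Lem-GN} (which replaces the usual 3-d G--N inequality, unavailable on $\R\times\Torus^2$ without boundary conditions, to convert $H^2$-type controls into the $L^\infty$ bounds needed in nonlinear estimates). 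The $C\e$ rather than $C\e^2$ is extracted by a weighted Young inequality of the form $C\e e^{-\alpha t}\|\psi\|\le \frac{C\e}{2\nu}e^{-\alpha t}+\frac{\nu C\e}{2}e^{-\alpha t}\|\psi\|^2$, which integrates in $t$ to at most $C\e/\alpha + \nu C\e\,N(T)^2/\alpha$.

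\textbf{Zeroth-order estimate.} I would multiply \cref{eq-pertur-1} by $\frac{p'(\rhot)}{\rhot^2}\phi$ and take the $L^2(\Omega)$ inner product of \cref{eq-pertur-2} with $\psi$, add, and integrate by parts. Using that $(\rhot,\uvt)$ solves \cref{NS} modulo the errors $h_0,\h$, this yields an identity of the schematic form
\begin{equation*}
\tfrac{d}{dt}\!\int_\Omega\!\Big(\tfrac{p'(\rhot)}{2\rhot^2}\phi^2+\tfrac{\rho}{2}|\psi|^2\Big)dx + \mu\|\nabla\psi\|^2 + (\mu+\lambda)\|\dv\psi\|^2 + \int_\Omega \p_1\ut_1^r\,Q(\phi,\psi_1)\,dx = \mathcal R,
\end{equation*}
where $Q$ is a positive quadratic form for $\delta$ small (standard for the symmetrized $p$-system with a rarefaction background) and $\mathcal R$ splits into: (i) linear forcings from $\f,h_0$, controlled by the weighted Young trick above, producing $C\e + \nu_0 N(T)^2$; (ii) the explicit linear term $(2\mu+\lambda)\p_1^2 \ut_1^r\,\E_1\cdot\psi$, handled by $|\p_1^2\ut_1^r|\le C\p_1\ut_1^r$ from \cref{Lem-rare}\,(ii) and Young against the rarefaction dissipation, with the leftover $\int_\Omega \p_1\ut_1^r\,dx\le C\delta$; (iii) the pressure mismatch $\bigl(p'(\rho)-\tfrac{\rho}{\rhot}p'(\rhot)\bigr)\nabla\rhot\cdot\psi$, which after a Taylor expansion in $\phi$ reduces to a Cauchy--Schwarz pairing against the rarefaction dissipation and the $L^p$ bounds of \cref{Lem-rare}\,(iv), and is the source of the $C\delta^{1/4}$ residue via a standard $L^1$--$L^\infty$ interpolation on $\p_1\rhot$; and (iv) genuinely nonlinear remainders like $\phi\g\cdot\psi$ and $\psi\cdot\nabla\uvt\cdot\psi$, absorbed using \cref{Lem-per,Lem-sig-eta} together with the smallness of $\delta$ and $N(T)$.

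\textbf{Higher orders and the main obstacle.} For the first- and second-order estimates, I would differentiate \cref{eq-pertur-1,eq-pertur-2} once and twice, pair with $(\nabla\phi,\nabla\psi)$ and $(\nabla^2\phi,\nabla^2\psi)$, and repeat the above identity. Since \cref{eq-pertur-1} is hyperbolic, dissipation for $\nabla\phi$ is not immediate and must be recovered by the classical cross-estimate: the $L^2$-pairing of \cref{eq-pertur-2} with $\nabla\phi/\rho$, after integration by parts, trades $\|\nabla\phi\|_1^2$ against the already-controlled $\|\nabla\psi\|_2^2$ plus an exact time derivative $\tfrac{d}{dt}\!\int\!\psi\cdot\nabla\phi\,dx$. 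All commutators and nonlinear terms of the schematic form $\nabla^k\phi\cdot\nabla^j\uvt$, $\nabla^k\psi\cdot\nabla^j\uv^\pm$, etc., are handled by pointwise ansatz bounds from \cref{Lem-rare,Lem-per,Lem-sig-eta} combined with \cref{Lem-GN} to interpolate $L^\infty$ norms by $\|\nabla^2(\phi,\psi)\|$ and $\|(\phi,\psi)\|$; this anisotropic G--N inequality is essential because $(\phi,\psi)$ need not decay pointwise as $|x_1|\to\infty$. Summing the three levels and choosing $\e_0,\delta_0,\nu_0$ small then yields \cref{est-apriori}. The main obstacle I anticipate is in item (iii) together with the forcing of item (ii): neither term is time-integrable on its own and both couple to $\psi$ only through the weighted, partially coercive rarefaction dissipation, so the bookkeeping of the Young weights must be arranged so that no more than an $O(\delta^{1/4})$ residue survives -- everything else is routine energy-method machinery built on top of the already-proved exponential-decay and G--N tools.
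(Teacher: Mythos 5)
Your overall architecture (relative-entropy/symmetrized zeroth-order estimate, cross estimate pairing \cref{eq-pertur-2} with $\nabla\phi/\rho$ to recover dissipation for $\nabla\phi$, $-\lap\psi$ pairings for the velocity, and the anisotropic G--N inequality of \cref{Lem-GN} in place of the usual 3-d one) is indeed the paper's scheme. But there is a genuine gap in your item (ii), which is precisely the term the whole $\delta^{\frac14}$ bookkeeping hinges on. You propose to handle the forcing $(2\mu+\lambda)\p_1^2\ut_1^r\,\psi_1$ by $\abs{\p_1^2\ut_1^r}\le C\p_1\ut_1^r$ and Young against the rarefaction dissipation, ``with the leftover $\int_\Omega\p_1\ut_1^r\,dx\le C\delta$.'' That leftover must still be integrated in time: $\int_\Omega\p_1\ut_1^r\,dx=\ub_1^+-\ub_1^-\sim\delta$ for \emph{every} $t$, so the residue is $C\delta T$, which grows linearly in $T$ and destroys the uniform-in-$T$ a priori bound that the continuation argument (and the $L^2_t$ control of the dissipation integrals) requires. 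Even upgrading to $\norm{\p_1^2\ut_1^r}_{L^1(\R)}\le C\min\{\delta,(1+t)^{-1}\}$ from \cref{Lem-rare}\,iv) and integrating in time only gives an $O(1)$ (in fact logarithmically growing) contribution, not a small one. The paper's actual treatment of this term (see \cref{eq-4}) is different and essential: decompose $\psi=\sum_k\psi^{(k)}$ via \cref{Lem-GN}, pair $\p_1^2\ut_1^r$ with $\psi^{(1)},\psi^{(2)}$ in $L^1$--$L^\infty$ and with $\psi^{(3)}$ in $L^{6/5}$--$L^6$, use the time decay $\min\{\delta,(1+t)^{-1}\}$, spend half of $\int_0^T\norm{\nabla\psi}^2dt$ and a fraction of $\sup_t\norm{\psi}^2$, and tune the Young exponents (the $(1+t)^{3/4}\min\{\delta,(1+t)^{-1}\}^2$ integral) so that exactly $C\delta^{\frac14}$ survives.

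Relatedly, you misidentify the origin of the $\delta^{\frac14}$ residue: the pressure-mismatch term $\bigl(p'(\rho)-\tfrac{\rho}{\rhot}p'(\rhot)\bigr)\nabla\rhot\cdot\psi$ of your item (iii) does not survive at zeroth order at all -- it appears with opposite signs in the two multiplied equations (compare \cref{eq-1} and \cref{eq-2}) and cancels in the relative-entropy combination, which is exactly why the paper multiplies the continuity equation by $\Phi$ rather than by a plain quadratic symmetrizer; with your choice $\tfrac{p'(\rhot)}{\rhot^2}\phi$ you would have to carry and estimate this cubic-type remainder separately. A secondary omission: in the higher-order lemmas the nonlinear terms such as $\dv\psi\,\abs{\nabla\phi}^2$ and $\nabla\phi\cdot\nabla\psi\,\nabla\phi$ force $\norm{\nabla\psi}_{L^\infty}$ to be interpolated against $\norm{\nabla^3\psi}$, so the first- and second-order estimates each leak a term $C\nu_0\int_0^T\norm{\nabla^3\psi}^2dt$ that is only absorbed at the final, third-derivative level; your sketch does not account for this ordering of the closure, though that is a bookkeeping issue rather than a conceptual one. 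The treatment of the $\p_1^2\ut_1^r\,\psi_1$ forcing, however, is a real flaw as written.
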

For convenience, in the remaining part of this paper, we let $ C>0 $ denote a generic constant which is independent of $ \e, \delta $ or $ N(T). $
Under the assumptions of \cref{Thm-apriori}, it follows from \cref{Lem-GN} that
\begin{align*} 
& \sup_{t\in(0,T)} \norm{\phi,\psi}_{L^\infty(\Omega)} \\
&\quad \leq C\sup_{t\in(0,T)} \left\{ \norm{\nabla(\phi,\psi)}^{\frac{1}{2}} \norm{\phi,\psi}^{\frac{1}{2}} + \norm{\nabla (\phi,\psi)} + \norm{\nabla^2 (\phi,\psi)}^{\frac{3}{4}} \norm{\phi,\psi}^{\frac{1}{4}} \right\} \\
&\quad \leq C \sup_{t\in(0,T)} \norm{\phi,\psi}_2 \leq C N(T).
\end{align*}
Thus, one can first choose $ 0<\nu_0<1 $ small enough such that
\begin{equation}\label{bdd-rho-u}
\frac{1}{2}\rhob^- \leq \inf_{\substack{x\in\Omega \\ t\in(0,T)}} \rho(x,t)  \leq \sup_{\substack{x\in\Omega \\ t\in(0,T)}} \rho(x,t) \leq 2 \rhob^+ \quad \text{ and } \quad \sup_{\substack{x\in\Omega \\ t\in(0,T)}} \abs{\uv(x,t)} \leq C.
\end{equation}

\begin{Lem}\label{Lem-est-0}
Under the assumptions of \cref{Thm-apriori}, if $ \e_0>0 $ and $ \nu_0>0 $ are small enough, then
\begin{equation}\label{eq-est-0}
\sup_{t\in(0,T)} \norm{\phi, \psi}^2 + \int_0^T \norm{ \left(\p_1 \ut_1^r\right)^{\frac{1}{2}} \left(\phi, \psi_1\right) }^2 dt + \int_0^T \norm{\nabla \psi}^2  dt \leq C\norm{\phi_0, \psi_0}^2 + C (\e+\delta^{\frac{1}{4}}).
\end{equation}
\end{Lem}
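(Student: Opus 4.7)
The plan is to run an $L^2$ relative-entropy energy estimate in the spirit of Matsumura--Nishihara, adapted to the periodic-perturbation setting as in \cite{HY2020}. I would introduce the relative entropy
\[
\mathcal{E} := \tfrac{1}{2}\rho|\psi|^2 + \big(\Phi(\rho)-\Phi(\rhot)-\Phi'(\rhot)\phi\big),
\]
where $\Phi$ is the pressure potential determined by $\Phi''(\rho)=p'(\rho)/\rho$, so that under \cref{bdd-rho-u} $\mathcal{E}$ is comparable to $\phi^2+|\psi|^2$. Multiplying \cref{eq-pertur-2} by $\psi$ and \cref{eq-pertur-1} by $\Phi'(\rho)-\Phi'(\rhot)$, summing and integrating by parts on $\Omega$, and using the Riemann-invariant identity $\p_1\ut_1^r=\sqrt{p'(\rhot^r)}\,\p_1\rhot^r/\rhot^r$ which follows from \cref{rel-key}, I expect an identity of the schematic form
\[
\frac{d}{dt}\!\int_\Omega\!\mathcal{E}\,dx+\!\int_\Omega\!\big(\mu|\nabla\psi|^2+(\mu+\lambda)(\dv\psi)^2\big)dx+\!\int_\Omega\!\p_1\ut_1^r\,Q(\phi,\psi_1)\,dx=\sum_j S_j(t),
\]
with $Q$ a quadratic that is positive definite for $\delta$ small (the rarefaction good term), and $S_j$ collecting four groups: $(S_1)$ the ansatz errors $\f\cdot\psi$ and $-\phi\g\cdot\psi$; $(S_2)$ the viscous residual $(2\mu+\lambda)\p_1^2\ut_1^r\psi_1$; $(S_3)$ the cross terms produced by $\rhot-\rhot^r$ and $\uvt-\ut_1^r\E_1$ in \cref{ansatz}, which by construction involve only $v^\pm,\zv^\pm$; and $(S_4)$ cubic pressure remainders.

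For $(S_1),(S_3),(S_4)$ the estimates are routine. By \cref{Lem-h} and \cref{Lem-per}, every factor carries an $\e e^{-\alpha t}$ weight in $W^{1,\infty}$ or $W^{1,p}$ norm, so that after Cauchy--Schwarz one half is absorbed into $\mu\|\nabla\psi\|^2$ or into the rarefaction good term while the other half contributes $O(\e)$ to the final bound; the $\phi\g\cdot\psi$ piece is absorbed in full for $\e_0$ small by \cref{g-f}. The cubic remainders are controlled using the a priori bound $N(T)<\nu_0$ together with the embedding implied by \cref{Lem-GN}.

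The main obstacle is $(S_2)$, the viscous residual $(2\mu+\lambda)\p_1^2\ut_1^r\psi_1$, which is the only source that does not inherit periodic exponential decay; only the slow rarefaction rate $\min\{\delta,(1+t)^{-1}\}$ from \cref{Lem-rare} iv) is available. The naive pointwise bound $|\p_1^2\ut_1^r|\leq C\p_1\ut_1^r$ from \cref{Lem-rare} ii) reduces $(S_2)$ to $\eta\int\p_1\ut_1^r\psi_1^2\,dx+C_\eta\int\p_1\ut_1^r\,dx$, whose time integral grows linearly in $T$ and is thus useless. Instead, I would use Hölder in space at exponents $(3/2,3)$,
\[
\Big|\int_\Omega\p_1^2\ut_1^r\,\psi_1\,dx\Big|\leq C\|\p_1^2\ut_1^r\|_{L^{3/2}(\R)}\,\|\psi_1\|_{L^3(\Omega)},
\]
use the interpolation $\|\psi_1\|_{L^3}\lesssim\|\psi_1\|^{1/2}(\|\nabla\psi_1\|+\|\psi_1\|)^{1/2}$ supplied by \cref{Lem-GN} on the unbounded cylinder $\Omega$, and then apply Hölder in time with exponents $(4/3,4)$. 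A short computation using the $t$-integrability of $\min\{\delta,(1+t)^{-1}\}^{4/3}$ gives $\|\p_1^2\ut_1^r\|_{L^{4/3}_t L^{3/2}_x}\leq C\delta^{1/4}$, while $\|\psi_1\|_{L^4_t L^3_x}\leq CN(T)$ by interpolating between $L^\infty_t L^2_x$ and $L^2_t L^6_x$. Young's inequality then produces the claimed $O(\delta^{1/4})$ contribution.

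Finally, choosing $\eta>0$ small to absorb all $\eta N(T)^2$ terms into the left-hand side and integrating in time converts the bound on $\int\mathcal{E}$ into the $L^\infty_t L^2_x$ control of $(\phi,\psi)$, the viscous dissipation into $\int_0^T\|\nabla\psi\|^2 dt$, and the rarefaction good term into $\int_0^T\|(\p_1\ut_1^r)^{1/2}(\phi,\psi_1)\|^2 dt$, which is exactly \cref{eq-est-0}. The hardest point is the third step: both identifying the right space-time Hölder triple, and using \cref{Lem-GN} rather than the standard 3-d Sobolev embedding, which fails on $\R\times\Torus^2$ without zero boundary conditions.
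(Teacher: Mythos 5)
Your overall strategy is the paper's: a relative-entropy $L^2$ estimate (the paper multiplies the mass equation by $\Phi(\rho,\rhot)=\int_{\rhot}^{\rho}\frac{p(s)-p(\rhot)}{s^2}ds$ and \cref{eq-pertur-2} by $\psi$, which is equivalent to your $\mathcal{E}$), with the sources $\f$, $\phi\g$ and the cross terms killed by the exponential decay in \cref{Lem-h} and \cref{Lem-per}, and with the viscous residual $(2\mu+\lambda)\p_1^2\ut_1^r\psi_1$ singled out as the only delicate term. You also correctly locate where $\delta^{\frac14}$ must come from. However, your treatment of that term has a genuine gap: the bound $\norm{\psi_1}_{L^4_tL^3_x}\leq CN(T)$ is not available. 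On $\Omega=\R\times\Torus^2$ the inequality $\norm{\psi_1}_{L^6(\Omega)}\leq C\norm{\nabla\psi_1}$ fails (this is exactly the failure of the standard embedding that motivates \cref{Lem-GN}); the G--N type inequality only gives $\norm{\psi_1}_{L^6}\leq C\sum_{k=1}^3\norm{\nabla\psi_1}^{k/3}\norm{\psi_1}^{1-k/3}$, and likewise your own interpolation $\norm{\psi_1}_{L^3}\lesssim\norm{\psi_1}^{1/2}\left(\norm{\nabla\psi_1}+\norm{\psi_1}\right)^{1/2}$ carries an extra $\norm{\psi_1}$ inside the bracket. Since the a priori quantity \cref{apriori} controls only $\sup_t\norm{\psi}^2$ and $\int_0^T\norm{\nabla\psi}^2dt$, but not $\int_0^T\norm{\psi}^2dt$ (which may grow like $\nu_0^2T$), your space--time H\"older step yields at best $\norm{\psi_1}_{L^4_tL^3_x}\leq CN(T)(1+T)^{1/4}$, so the resulting contribution $C\delta^{1/4}N(T)(1+T)^{1/4}$ is not uniform in $T$ and the estimate does not close.

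The repair is the paper's weighted Young inequality in time rather than a global H\"older pairing. After decomposing $\psi=\sum_k\psi^{(k)}$ by \cref{Lem-GN}, the components controlled purely by $\norm{\nabla\psi}$ are absorbed into the dissipation; the remaining one-dimensional component produces, via \cref{Lem-rare} iv), a term of the form $\min\{\delta,(1+t)^{-1}\}^{4/3}\norm{\psi}^{2/3}$, which is then split pointwise in $t$ as
\begin{equation*}
\min\{\delta,(1+t)^{-1}\}^{\frac43}\norm{\psi}^{\frac23}\leq \frac14(1+t)^{-\frac32}\norm{\psi}^{2}+C(1+t)^{\frac34}\min\{\delta,(1+t)^{-1}\}^{2}.
\end{equation*}
The first piece is integrable in time against $\sup_{t}\norm{\psi}^2$ and is absorbed into the left-hand side with a small constant, while the second integrates to $C\delta^{\frac14}$. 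Inserting this device in place of your $L^{4/3}_tL^{3/2}_x$--$L^4_tL^3_x$ pairing (the rest of your argument being sound) recovers \cref{eq-est-0}.
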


\begin{proof}

Define 
\begin{equation*}
\Phi(\rho,\rhot)= \int_{\rhot}^\rho \frac{p(s)-p(\rhot)}{s^2} ds = \frac{\gamma}{(\gamma-1)\rho} \left[p(\rho)-p(\rhot) -p'(\rhot) (\rho-\rhot)\right],
\end{equation*}
which satisfies $ C^{-1} \abs{\phi}^2 \leq \Phi(\rho,\rhot) \leq C \abs{\phi}^2 $ for some constant $ C>0, $ and
\begin{equation*}
\p_\rho \Phi = \frac{p(\rho)-p(\rhot)}{\rho^2}, \qquad \p_{\rhot} \Phi = p'(\rhot) \Big(\frac{1}{\rho} - \frac{1}{\rhot}\Big).
\end{equation*}
Then multiplying $ \Phi $ on  the first equation of \cref{NS} yields that
\begin{equation*}
\begin{aligned}
\p_t(\rho\Phi) + \dv(\rho \Phi \uv) & = \rho \left(\p_t\Phi + \uv\dnab\Phi\right) \\
& = \rho \p_\rho \Phi (\p_t \rho+\uv\dnab\rho) + \rho \p_{\rhot} \Phi (\p_t \rhot +  \uv \dnab\rhot) \\
& = - \left[p(\rho)-p(\rhot)\right] \dv \uv - p'(\rhot) \phi \Big[ \frac{1}{\rhot} \left( h_0+\psi\dnab\rhot \right) -\dv \uvt \Big] \\
& = -\left[p(\rho)-p(\rhot) -p'(\rhot) \phi \right] \dv \uvt -\dv\left[ \left(p(\rho)-p(\rhot)\right) \psi \right] \\
& \quad + \psi\dnab \left(p(\rho)-p(\rhot)\right) -\frac{p'(\rhot)}{\rhot}\phi \left( h_0+\psi\dnab\rhot \right),
\end{aligned}
\end{equation*}
which gives that
\begin{equation}\label{eq-1}
\begin{aligned}
& \p_t(\rho\Phi) + \left[p(\rho)-p(\rhot) -p'(\rhot) \phi \right] \p_1 \ut_1^r - p'(\rho) \psi\dnab \phi \\
& \quad = - \dv Q_1 + \Big(p'(\rho)-\frac{\rho}{\rhot}p'(\rhot)\Big) \psi\dnab\rhot \\
& \qquad -\left[p(\rho)-p(\rhot) -p'(\rhot) \phi \right]  \left(\dv\uvt-\p_1 \ut_1^r\right) -\frac{p'(\rhot)}{\rhot} h_0 \phi,
\end{aligned}
\end{equation}
where $ Q_1 = \rho \Phi \uv + \left(p(\rho)-p(\rhot)\right) \psi, $ and it follows from \cref{Lem-per} that
\begin{equation*}
\begin{aligned}
& \norm{\dv \uvt-\p_1  \ut_1^r}_{L^\infty(\Omega)} \leq C \norm{\zv^\pm}_{W^{1,\infty}(\Omega)} \leq C\e e^{-\alpha t}.
\end{aligned}
\end{equation*}
Multiplying $ \cdot\psi $ on both sides of \cref{eq-pertur-2} yields that 
\begin{equation*}
\begin{aligned}
	& \p_t \Big( \frac{1}{2} \rho \abs{\psi}^2 \Big) - \frac{1}{2} \p_t \rho \abs{\psi}^2 + \rho (\uv \cdot \nabla \psi) \cdot \psi + \rho (\psi\cdot\nabla \uvt) \cdot\psi  + p'(\rho) \nabla \phi \cdot \psi \\
	& \quad  + \Big(p'(\rho)-\frac{\rho}{\rhot}p'(\rhot)\Big) \nabla\rhot \cdot \psi + \mu \abs{\nabla\psi}^2 - \frac{\mu}{2} \dv\left(\nabla\abs{\psi}^2\right) + (\mu+\lambda)\abs{\dv \psi}^2 \\
	& \quad - (\mu+\lambda) \dv\left(\psi\dv\psi\right) =  \f\cdot \psi - \phi \g\cdot\psi + \left(2\mu+\lambda\right)\p_1 ^2 \ut_1^r \psi_1.
\end{aligned}
\end{equation*}
Note that
\begin{align*}
-\frac{1}{2} \p_t\rho \abs{\psi}^2 + \rho (\uv\cdot\nabla)\psi \cdot\psi & = -\frac{1}{2} \left[ \p_t\rho + \dv(\rho \uv) \right] \abs{\psi}^2 + \dv\Big( \frac{1}{2} \rho \abs{\psi}^2 \uv \Big) \\
& = \dv\Big( \frac{1}{2} \rho \abs{\psi}^2 \uv \Big), 
\end{align*}
and
\begin{align*}
\rho (\psi\cdot\nabla\uvt ) \cdot\psi & = \rho (\psi\dnab (\ut_1^r \E_1)) \cdot \psi + \rho (\psi\dnab (\uvt- \ut_1^r \E_1)) \cdot \psi  \\
& = \rho \p_1  \ut_1^r \psi_1^2 + \underbrace{\rho (\psi\dnab (\uvt- \ut_1^r \E_1)) \cdot \psi}_{I_1}.
\end{align*}
By \cref{ansatz} and \cref{Lem-per}, one has that
\begin{equation*}
\abs{I_1} \leq C \norm{\zv^\pm}_{W^{1,\infty}(\Omega)} \abs{\psi}^2 \leq C\e e^{-\alpha t} \abs{\psi}^2.
\end{equation*}
Then one has
\begin{equation}\label{eq-2}
\begin{aligned}
& \p_t \Big( \frac{1}{2} \rho \abs{\psi}^2 \Big) + \mu \abs{\nabla\psi}^2 + (\mu+\lambda)\abs{\dv \psi}^2 + p'(\rho)\psi\dnab \phi + \rho \p_1 \ut_1^r \psi_1^2 \\
&\quad = \f\cdot\psi - \phi\g\cdot\psi + \left(2\mu+\lambda\right) \p_1 ^2 \ut_1^r \psi_1 +  \dv Q_2 - I_1 - \Big(p'(\rho)-\frac{\rho}{\rhot}p'(\rhot)\Big)\nabla\rhot\cdot\psi,
\end{aligned}
\end{equation}
where $ Q_2 = \frac{\mu}{2} \nabla \abs{\psi}^2 + (\mu+\lambda) \psi\dv\psi - \frac{1}{2}\rho \abs{\psi}^2 \uv. $ 
Summing \cref{eq-1,eq-2} together, integrating the resulting equation over $ \Omega\times(0,T) $ and combining \cref{g-f}, one has that
\begin{align} 
& \sup_{t\in(0,T)} \norm{\phi, \psi}^2 + \int_0^T \norm{\nabla \psi}^2 dt + \int_0^T \norm{\left(\p_1 \ut_1^r\right)^{\frac{1}{2}}\left(\phi,\psi_1\right)}^2 dt \notag \\
&\quad \leq C\norm{\phi_0, \psi_0}^2 + C \int_0^T \Big\{ \e e^{-\alpha t} \left(\norm{\phi}^2 + \norm{\psi}^2 \right) + \norm{h_0}\norm{\phi} \notag \\
& \qquad\qquad\qquad\quad + \left(\norm{\f}+ \norm{\g}_{L^\infty(\Omega)} \norm{\phi}\right)\norm{\psi} + \int_\Omega \abs{\p_1 ^2 \ut^r_1} \abs{\psi_1} dx \Big\} dt \notag \\
&\quad \leq C\norm{\phi_0, \psi_0}^2 + C\e \sup_{t\in(0,T)} \norm{\phi, \psi}^2 + C\e + C \int_0^T \int_\Omega \abs{\p_1 ^2 \ut^r_1} \abs{\psi} dx dt. \label{eq-3}
\end{align}
Decompose $ \psi=\sum\limits_{k=1}^3 \psi^{(k)} $ as in \cref{Lem-GN} such that $ \psi^{(k)} $ satisfies the $ k $-dimensional G-N inequalities. Then it follows from \cref{Lem-rare} that the last term in \cref{eq-3} satisfies that
\begin{align}
& C \int_0^T \int_\Omega \abs{\p_1 ^2 \ut^r_1} \abs{\psi} dx dt \leq C \int_0^T \int_\Omega \sum\limits_{k=1}^3 \abs{\p_1 ^2 \ut^r_1} \abs{\psi^{(k)}} dx dt \notag \\
&\quad \leq C \int_0^T \Big[ \norm{\p_1 ^2 \ut^r_1}_{L^1(\Omega)} \left(\norm{\psi^{(1)}}_{L^\infty(\Omega)} + \norm{\psi^{(2)}}_{L^\infty(\Omega)} \right) + \norm{\p_1 ^2 \ut^r_1}_{L^{\frac{6}{5}}(\Omega)} \norm{\psi^{(3)}}_{L^6(\Omega)} \Big] dt \notag \\
&\quad \leq C \int_0^T \min\{\delta, (1+t)^{-1}\} \Big( \norm{\nabla\psi^{(1)}}^{\frac{1}{2}} \norm{\psi^{(1)}}^{\frac{1}{2}} + \norm{\nabla\psi^{(2)}} + \norm{\nabla\psi^{(3)}} \Big) dt \notag \\
&\quad \leq  C \int_0^T \min\{\delta,(1+t)^{-1}\}^2 dt + \frac{1}{2} \int_0^T \norm{\nabla \psi}^2 dt  + C \int_0^T \min\{\delta,(1+t)^{-1}\}^{\frac{4}{3}} \norm{\psi}^{\frac{2}{3}} dt \notag \\
&\quad \leq  C\delta + \frac{1}{2} \int_0^T \norm{\nabla\psi}^2 dt + \frac{1}{4} \int_0^T (1+t)^{-\frac{3}{2}} \norm{\psi}^2 dt + C \int_0^T (1+t)^{\frac{3}{4}} \min\{\delta,(1+t)^{-1}\}^2 dt \notag \\
&\quad \leq \frac{1}{2} \int_0^T \norm{\nabla\psi}^2 dt + \frac{1}{2} \sup_{t\in(0,T)} \norm{\psi}^2 + C\delta^{\frac{1}{4}}. \label{eq-4}
\end{align}
Collecting \cref{eq-3,eq-4}, one can obtain \cref{eq-est-0} if $ \e>0 $ is small enough.
	
\end{proof}

\vspace{0.3cm}

\begin{Lem}\label{Lem-est-1}
Under the assumptions of \cref{Thm-apriori}, if $ \delta_0>0, \e_0>0 $ and $ \nu_0>0 $ are small enough, then one has that
\begin{equation}\label{eq-est-1}
	\sup_{t\in(0,T)} \norm{\nabla\phi}^2 + \int_0^T \norm{\nabla\phi}^2 dt \leq C \norm{\phi_0}_1^2 + C \norm{\psi_0}^2 + C (\e+\delta^{\frac{1}{4}}) + C \nu_0 \int_0^T \norm{\nabla^3 \psi}^2 dt.
\end{equation}
\end{Lem}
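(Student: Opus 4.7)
\smallskip
\noindent\textbf{Proof plan for \cref{Lem-est-1}.}
The equation \cref{eq-pertur-1} is purely hyperbolic in $\phi$, so $\nabla\phi$ carries no intrinsic dissipation; it must be borrowed from the pressure gradient $p'(\rho)\nabla\phi$ in the momentum equation. The plan is therefore to build a Lyapunov functional of the form
\begin{equation*}
\mathcal{E}(t) := \tfrac{1}{2}\|\nabla\phi\|^2 + K\int_{\Omega}\rho\,\psi\cdot\nabla\phi\,dx,
\end{equation*}
with $K>0$ chosen small but fixed, and to derive a differential inequality for $\mathcal{E}(t)$ that produces $\int_0^T \|\nabla\phi\|^2\,dt$ on the left.

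First, I would apply $\partial_i$ ($i=1,2,3$) to \cref{eq-pertur-1}, test with $\partial_i\phi$ and sum, obtaining $\frac{1}{2}\frac{d}{dt}\|\nabla\phi\|^2$ plus cross terms of the form $-\int\nabla(\rho\dv\psi)\cdot\nabla\phi-\int\nabla(\uv\cdot\nabla\phi)\cdot\nabla\phi-\int\nabla(\phi\dv\uvt+\nabla\rhot\cdot\psi+h_0)\cdot\nabla\phi$. The convective piece is handled as usual by integration by parts in $x$ and by using \cref{eq-pertur-1} itself on $\partial_t\rho$, while the piece $\int\rho\nabla\dv\psi\cdot\nabla\phi$ is only bounded by Young's inequality as $\varepsilon\|\nabla\phi\|^2+C\|\nabla^2\psi\|^2$ (absorbed later via $N(T)$). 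Second, I would compute
\begin{equation*}
\frac{d}{dt}\int_\Omega\rho\psi\cdot\nabla\phi\,dx
=\int\partial_t\rho\,\psi\cdot\nabla\phi\,dx+\int\rho\partial_t\psi\cdot\nabla\phi\,dx+\int\rho\psi\cdot\nabla\partial_t\phi\,dx,
\end{equation*}
substitute $\rho\partial_t\psi$ from \cref{eq-pertur-2} (producing the crucial $-\int p'(\rho)|\nabla\phi|^2\,dx$) and substitute $\partial_t\phi$ from \cref{eq-pertur-1} in the last term. Integrating by parts there yields the good quadratic $\int\rho^2(\dv\psi)^2\,dx$, plus lower-order pieces controlled by Young's inequality. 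The viscosity contribution, after one integration by parts, reduces to $(2\mu+\lambda)\int\nabla\dv\psi\cdot\nabla\phi$, again bounded by $\varepsilon\|\nabla\phi\|^2+C\|\nabla^2\psi\|^2$.

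Combining the two identities with $K$ chosen so that $K\int p'(\rho)|\nabla\phi|^2$ dominates all cross terms of the form $\varepsilon\|\nabla\phi\|^2$, integrating on $(0,T)$, and invoking \cref{eq-est-0} of \cref{Lem-est-0} to absorb the boundary piece $\int\rho\psi\cdot\nabla\phi\bigr|_{t=T}\leq\tfrac{1}{4}\|\nabla\phi(T)\|^2+C\|\psi(T)\|^2$, gives a provisional inequality
\begin{equation*}
\sup_{t}\|\nabla\phi\|^2+\int_0^T\|\nabla\phi\|^2\,dt
\leq C\|\phi_0\|_1^2+C\|\psi_0\|^2+C(\e+\delta^{1/4})+C\int_0^T\|\nabla^2\psi\|^2\,dt+\mathcal{R},
\end{equation*}
where $\mathcal{R}$ collects rarefaction- and source-term contributions. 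The rarefaction pieces such as $\int\partial_1^2\ut_1^r\partial_1\phi$ and $\int(\partial_1\ut_1^r)|\nabla\phi|^2$ are treated by \cref{Lem-rare} combined with the $k$-dimensional decomposition of \cref{Lem-GN} in exactly the manner of \cref{eq-4}, yielding the $C\delta^{1/4}$ term; the error sources $h_0,\f,\g$ decay exponentially by \cref{Lem-h,g-f}, yielding the $C\e$ term.

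\smallskip
\noindent\textbf{Main obstacle.} The genuinely delicate part is the treatment of the nonlinear cubic interactions coming from $\int\rho\psi\cdot\nabla\partial_t\phi$ and from the term $\int\rho(\psi\dnab\uvt)\cdot\nabla\phi$ once $\uvt-\ut_1^r\E_1$ is separated off: a naive bound would cost $\|\nabla^2\phi\|$, which is unavailable in this lemma. The remedy is to move the extra derivative onto $\psi$ via one more integration by parts, producing a term of schematic form $\int (\nabla^3\psi)\cdot\psi\cdot\phi\,dx$ (or with $\nabla\phi$ and $\nabla\psi$ factors), to which Cauchy--Schwarz combined with $\|\phi\|_{L^\infty(\Omega)}+\|\psi\|_{L^\infty(\Omega)}\leq CN(T)\leq C\nu_0$ gives exactly the $C\nu_0\int_0^T\|\nabla^3\psi\|^2\,dt$ contribution stated in the lemma; this term will be absorbed later once the top-order estimate is available. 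A last smallness choice of $\nu_0,\e_0,\delta_0$ then yields \cref{eq-est-1}.
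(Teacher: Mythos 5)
Your overall architecture (a functional coupling $\norm{\nabla\phi}^2$ with $\int_\Omega\psi\cdot\nabla\phi\,dx$, dissipation for $\nabla\phi$ extracted from the pressure term in \cref{eq-pertur-2}, boundary-in-time term absorbed via \cref{Lem-est-0}, rarefaction terms via \cref{Lem-rare} and \cref{Lem-GN} as in \cref{eq-4}, sources via \cref{Lem-h}, cubic terms paid with $C\nu_0\int_0^T\norm{\nabla^3\psi}^2dt$) matches the paper, but there is a genuine gap at the single most delicate point: the second-order coupling term $\int\nabla\dv\psi\cdot\nabla\phi\,dx$. You propose to bound both copies of it --- the one produced by applying $\nabla$ to \cref{eq-pertur-1} and testing with $\nabla\phi$, and the one produced by the viscosity in \cref{eq-pertur-2} tested with $\nabla\phi$ --- by Young's inequality as a small multiple of $\norm{\nabla\phi}^2$ plus a large constant times $\norm{\nabla^2\psi}^2$, ``absorbed later via $N(T)$''. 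This does not close. Because your cross term carries a small constant $K$, the only dissipation on $\nabla\phi$ is $K\int p'(\rho)\abs{\nabla\phi}^2dx$, so the Young parameter must be of size $K$ and the companion constant is of size $K^{-1}$; the resulting term $CK^{-1}\int_0^T\norm{\nabla^2\psi}^2dt$ is not controlled at this stage of the scheme. It is bounded only in \cref{Lem-est-2}, whose proof in turn uses \cref{Lem-est-1} (through terms like $C\int_0^T\norm{\nabla\phi}^2dt$ coming from $\frac{p'(\rho)}{\rho}\nabla\phi\cdot\lap\psi$), and since both offending constants are large, no choice of relative weights lets the two lemmas absorb each other. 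Bounding it instead by $CN(T)^2\leq C\nu_0^2$ produces a term that is not of the form allowed in \cref{eq-est-1} and, once propagated into \cref{Thm-apriori}, an un-absorbable constant $C\nu_0^2$ that destroys the continuation argument in the proof of \cref{Thm}. Note the asymmetry: a large constant times $\int_0^T\norm{\nabla\psi}^2dt$ is harmless (already controlled by \cref{Lem-est-0}), but a large constant times $\int_0^T\norm{\nabla^2\psi}^2dt$ is fatal here.

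The paper removes this term exactly rather than estimating it. The functional is $\int_\Omega\big(\frac{2\mu+\lambda}{2\rho^2}\abs{\nabla\phi}^2+\psi\cdot\nabla\phi\big)dx$ with the coupling constant fixed, not small: one multiplies the gradient-of-continuity identity \cref{eq-5} by the constant $2\mu+\lambda$ and adds it to the momentum identity \cref{eq-9}, after rewriting the viscous terms as $\mu\lap\psi+(\mu+\lambda)\nabla\dv\psi=(2\mu+\lambda)\nabla\dv\psi-\mu\,\nabla\times(\nabla\times\psi)$ as in \cref{eq-7}. The two occurrences of $\frac{2\mu+\lambda}{\rho}\nabla\dv\psi\cdot\nabla\phi$ then cancel identically, and what survives ($\rho(\dv\psi)^2$, curl terms weighted by $\nabla\rhot$, etc., collected in $I_2$--$I_5$) costs only $C\int_0^T\norm{\nabla\psi}^2dt$ plus the admissible remainders. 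So the missing idea in your plan is precisely this structural cancellation: the coefficient in front of $\abs{\nabla\phi}^2$ must be exactly $2\mu+\lambda$ (with the $\rho^{-2}$ weight), not an arbitrarily small $K$; with that correction the rest of your outline goes through along the paper's lines.
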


\begin{proof}
	
Taking the gradient $ \nabla $ on \cref{eq-pertur-1} and then multiplying the result by $ \cdot \frac{\nabla\phi}{\rho^2}, $ one has that
\begin{align}
& \p_t \Big( \frac{\abs{\nabla \phi}^2}{2\rho^2}\Big)+ \frac{\abs{\nabla \phi}^2}{\rho^3} \p_t\rho + \frac{\dv \psi}{\rho^2} \left(\nabla \rhot + \nabla\phi\right) \cdot \nabla\phi + \frac{1}{\rho} \nabla \dv \psi \cdot \nabla \phi  \notag \\
& \quad + \frac{1}{2\rho^2} (\uv\cdot \nabla)  \abs{\nabla \phi}^2 + \frac{1}{\rho^2} \nabla\phi \cdot \nabla \uv\nabla\phi  + \frac{\dv\uvt}{\rho^2} \abs{\nabla\phi}^2 + \frac{\phi}{\rho^2} \nabla\dv\uvt \dnab \phi \label{eq-4-1} \\
& \quad + \frac{1}{\rho^2} \nabla\phi\dnab^2\rhot \psi + \frac{1}{\rho^2} \nabla\phi\dnab\psi \nabla\rhot = - \frac{1}{\rho^2} \nabla h_0 \cdot\nabla\phi. \notag
\end{align}
Note that in \cref{eq-4-1}, the sum of the second term on the first line and the first term on the second line satisfies that
\begin{align*}
\frac{\abs{\nabla \phi}^2}{\rho^3} \p_t\rho+\frac{1}{2\rho^2} (\uv\cdot \nabla)  \abs{\nabla \phi}^2 & = \frac{\abs{\nabla \phi}^2}{\rho^3} \p_t\rho + \dv \Big(\frac{\abs{\nabla\phi}^2}{2\rho^2}  \uv\Big) - \dv \Big(\frac{\uv}{2\rho^2}\Big) \abs{\nabla\phi}^2 \\
& = \dv \Big( \frac{\abs{\nabla\phi}^2}{2\rho^2} \uv\Big) - \frac{3}{2\rho^2} \dv \uvt \abs{\nabla\phi}^2 - \frac{3}{2\rho^2} \dv \psi \abs{\nabla\phi}^2.
\end{align*}
Then \cref{eq-4-1} yields that
\begin{equation}\label{eq-5}
\begin{aligned}
& \p_t \Big(\frac{\abs{\nabla \phi}^2}{2\rho^2}\Big) -\frac{\dv \uvt}{2\rho^2} \abs{\nabla\phi}^2 - \frac{\dv \psi}{2\rho^2} \abs{\nabla\phi}^2 + \frac{\dv \psi}{\rho^2} \nabla \rhot \cdot \nabla\phi + \frac{1}{\rho} \nabla \dv \psi \cdot \nabla \phi \\
& \qquad + \frac{1}{\rho^2} \Big[ \nabla\phi \cdot \nabla \left(\uvt+\psi\right) \nabla\phi + \phi\nabla\dv\uvt \dnab \phi + \nabla\phi\dnab^2\rhot \psi + \nabla\phi\dnab\psi \nabla\rhot \Big] \\
&\quad = -\dv \Big( \frac{\abs{\nabla\phi}^2}{2\rho^2} \uv\Big) - \frac{1}{\rho^2} \nabla h_0 \cdot\nabla\phi.
\end{aligned}
\end{equation}
Multiplying $ \cdot\frac{\nabla\phi}{\rho} $ on \cref{eq-pertur-2} yields that
\begin{align*}
& \p_t \psi \cdot \nabla \phi + (\uv\cdot \nabla \psi) \cdot \nabla \phi + (\psi\cdot\nabla \uvt)\cdot\nabla\phi + \frac{p'(\rho)}{\rho} \abs{\nabla \phi}^2  \\
& \qquad  + \Big(\frac{p'(\rho)}{\rho}-\frac{p'(\rhot)}{\rhot}\Big) \nabla \rhot \cdot \nabla\phi - \frac{1}{\rho} \left[ \mu \lap \psi + (\mu+\lambda) \nabla\dv \psi \right] \cdot \nabla \phi \\
& \quad = \frac{1}{\rho} \f\cdot\nabla\phi - \frac{\phi}{\rho} \g\cdot\nabla \phi + \frac{2\mu+\lambda}{\rho} \p_1 ^2\ut_1^r \p_1 \phi. 
\end{align*}
Note that
\begin{align}
\p_t \psi \cdot \nabla \phi & = \p_t \left(\psi\cdot\nabla \phi \right) - \psi\cdot \nabla \p_t \phi \notag \\
& = \p_t \left(\psi\cdot\nabla \phi \right) - \dv (\psi\p_t\phi) \notag \\
&\quad - \dv\psi\big( h_0 + \rho \dv \psi + \uv\cdot \nabla\phi + \phi 
\dv\uvt + \nabla\rhot \cdot\psi \big), \label{eq-6}
\end{align}
and
\begin{align}
& \frac{1}{\rho} \left[ \mu \lap \psi + (\mu+\lambda) \nabla\dv \psi \right] \cdot \nabla \phi \notag \\
& \quad = \frac{2\mu+\lambda}{\rho} \nabla\dv \psi \cdot \nabla\phi + \frac{\mu}{\rho} \left( \lap \psi - \nabla\dv \psi \right) \cdot \nabla\phi \notag \\
& \quad = \frac{2\mu+\lambda}{\rho} \nabla\dv \psi \cdot \nabla\phi + \frac{\mu}{\rho} \dv \left(\nabla \phi \times \text{curl} \psi \right) \notag\\
& \quad = \frac{2\mu+\lambda}{\rho} \nabla\dv \psi \cdot \nabla\phi + \mu \dv \Big( \frac{ \nabla \phi \times \text{curl} \psi }{\rho}  \Big) + \frac{\mu \nabla \rho \cdot \left(\nabla \phi \times \text{curl} \psi \right) }{\rho^2} \label{eq-7} \\
& \quad = \frac{2\mu+\lambda}{\rho} \nabla\dv \psi \cdot \nabla\phi + \mu \dv \Big(\frac{ \nabla \phi \times \text{curl} \psi }{\rho}  \Big) + \frac{\mu \nabla \rhot \cdot \left(\nabla \phi \times \text{curl} \psi \right) }{\rho^2}, \notag
\end{align}
here and hereafter $ ``\times" $ denotes the external product of vectors. Thus it holds that
\begin{align}
& \p_t (\psi\dnab\phi) + \frac{p'(\rho)}{\rho} \abs{\nabla \phi}^2 - \frac{2\mu+\lambda}{\rho} \nabla\dv \psi \cdot \nabla\phi \notag\\
& \quad = \dv \Big( \psi\p_t \phi + \frac{\mu}{\rho} \nabla\phi 
\times \text{curl} \psi \Big) + \dv\psi\big( h_0 + \rho \dv \psi + \uv\cdot \nabla\phi + \phi 
\dv\uvt + \nabla\rhot \cdot\psi \big) \notag\\
& \qquad -(\uv\cdot \nabla \psi) \cdot \nabla \phi - (\psi\cdot\nabla \uvt)\cdot\nabla\phi - \Big(\frac{p'(\rho)}{\rho}-\frac{p'(\rhot)}{\rhot}\Big) \nabla \rhot \cdot \nabla\phi+ \frac{\mu \nabla \rhot \cdot \left(\nabla \phi \times \text{curl} \psi \right) }{\rho^2} \notag\\
& \qquad + \frac{1}{\rho} \f\cdot\nabla\phi - \frac{\phi}{\rho} \g\cdot\nabla \phi + \frac{2\mu+\lambda}{\rho} \p_1 ^2\ut_1^r \p_1 \phi. \label{eq-9}
\end{align}
Then by multiplying the constant $ 2\mu+\lambda $ on \cref{eq-5} and then adding the result onto \cref{eq-9}, one can get that
\begin{equation}\label{eq-8}
\p_t \Big( \frac{2\mu+\lambda}{2\rho^2}\abs{\nabla \phi}^2 + \psi\cdot\nabla \phi \Big) + \frac{p'(\rho)}{\rho} \abs{\nabla\phi}^2 = \dv Q_3 + \sum_{j=2}^5 I_j.
\end{equation}
where $ Q_3 = - \frac{\abs{\nabla\phi}^2}{2\rho^2} \uv + \psi\p_t\phi + \frac{\mu}{\rho} \nabla\phi 
\times \text{curl} \psi $ and
\begin{align*}
I_2 =~& \frac{2\mu+\lambda}{\rho^2} \Big( \frac{1}{2} \dv\uvt \abs{\nabla\phi}^2 - \dv \psi \nabla \rhot \cdot \nabla\phi - \nabla\phi \cdot \nabla \uvt \nabla\phi  - \nabla\phi\cdot\nabla\psi\nabla\rhot \Big) \\
&  + \rho (\dv \psi)^2 + \dv\psi \uv\cdot \nabla\phi  - (\uv \cdot \nabla \psi) \cdot \nabla \phi -\frac{\mu}{\rho^2} \nabla\rhot\cdot \left(\nabla\phi \times\text{curl} \psi \right), \\
I_3 =~& - \frac{2\mu+\lambda}{\rho^2} \left( \phi\nabla\dv\uvt\dnab\phi + \nabla\phi\cdot \nabla^2\rhot\psi \right) + \dv\psi \left( \phi\dv\uvt + \nabla\rhot \cdot \psi\right) \\
&  - (\psi\dnab\uvt)\dnab\phi - \Big(\frac{p'(\rho)}{\rho}-\frac{p'(\rhot)}{\rhot}\Big) \nabla \rhot \cdot \nabla\phi - \frac{\phi}{\rho} \g\dnab\phi, \\
I_4 =~&\frac{2\mu+\lambda}{\rho^2} \Big(\frac{1}{2} \dv \psi \abs{\nabla\phi}^2 -  \nabla\phi \cdot \nabla \psi\nabla\phi\Big), \\
I_5 =~& - \frac{2\mu+\lambda}{\rho^2} \nabla h_0 \dnab\phi + \dv\psi h_0  + \frac{1}{\rho} \f \dnab\phi + \frac{2\mu+\lambda}{\rho} \p_1 ^2\ut_1^r \p_1 \phi.
\end{align*}
For $ I_2, $ first note that
\begin{equation*}
\begin{aligned}
\norm{\nabla\uvt}_{L^\infty(\Omega)} & \leq \norm{\p_1 \ut^r_1}_{L^\infty(\R)} + C \norm{\zv^\pm}_{W^{1,+\infty}(\Omega)} \leq C(\delta+\e),\\
\norm{\nabla\rhot} _{L^\infty(\Omega)}
& \leq \norm{\p_1  \rhot^r} _{L^\infty(\R)} + C \norm{v^\pm}_{W^{1,+\infty}(\Omega)} \leq C(\delta+\e).
\end{aligned}
\end{equation*}
Then it follows from \cref{bdd-rho-u} that
\begin{equation}\label{I-2}
\int_0^T \norm{I_2}_{L^1(\Omega)} dt \leq \Big(\frac{1}{8}+C(\delta+\e)\Big) \int_0^T \int_\Omega \frac{p'(\rho)}{\rho} \abs{\nabla\phi}^2 dxdt + C \int_0^T \norm{\nabla\psi}^2 dt.
\end{equation}
\vspace{0.2cm}

For $ I_3, $ first note that $ \abs{\nabla\dv\uvt} \leq \abs{\p_1^2 \ut_1^r} + C\e e^{-\alpha t}, $ $ \abs{\nabla^2\rhot } \leq \abs{\p_1^2 \rhot^r} + C \e e^{-\alpha t} $ and
\begin{equation*}
\begin{aligned}
\abs{\nabla\rhot\cdot\psi - \p_1  \rhot^r\psi_1} & \leq C\e e^{-\alpha t}\abs{\psi}, \\ 
\abs{ \left(\psi\dnab \uvt\right)\dnab\phi - \p_1 \ut_1^r \psi_1\p_1 \phi} & \leq C\e e^{-\alpha t}\abs{\psi} \abs{\nabla\phi}, \\
\abs{\nabla\rhot\dnab\phi-\p_1  \rhot^r \p_1 \phi} & \leq C \e e^{-\alpha t}\abs{\nabla \phi}.
\end{aligned} 
\end{equation*}
And combining the fact that $ \norm{\g}_{L^\infty(\Omega)} \leq C\e e^{-\alpha t}, $ $ \p_1 \rhot_1^r \leq C \p_1 \ut_1^r $ and $ \norm{\phi,\psi}_{L^\infty(\Omega)} \leq N(T) \leq \nu_0 <1, $
it holds that
\begin{align}
\int_0^T \norm{I_3}_{L^1(\Omega)} dt & \leq C N(T) \int_0^T \left(\norm{\p_1^2\ut_1^r}+ \norm{ \p_1^2 \rhot^r}\right) \norm{\nabla\phi}dt + C\e \int_0^T e^{-\alpha t} \norm{\phi,\psi} \norm{\nabla(\phi,\psi)}  dt \notag \\
& \quad + C \int_0^T \int_\Omega \p_1  \ut_1^r \left(\abs{\phi}+\abs{\psi_1}\right) \left(\abs{\nabla\phi}+\abs{\nabla\psi}\right) dxdt \notag \\
& \leq C\delta^{\frac{1}{2}} + C (\delta^{\frac{1}{2}}+\e) \int_0^T \norm{\nabla \left(\phi, \psi\right)}^2 dt +  C\e \sup_{t\in(0,T)} \norm{\phi,\psi}^2  \notag \\
& \quad + C \int_0^T \norm{\left(\p_1 \ut_1^r\right)^{\frac{1}{2}} (\phi,\psi_1)}^2 dt. \label{I-3}
\end{align}
For $ I_5, $ it can follow easily from \cref{Lem-h} and \cref{g-f} that
\begin{align}
\int_0^T \norm{I_5}_{L^1(\Omega)} dt \leq~& C\int_0^T \e e^{-\alpha t} \norm{\nabla(\phi, \psi)} dt + C \int_0^T \min\{\delta, (1+t)^{-1}\} \norm{\nabla\phi} dt \notag \\
\leq~& C(\e+\delta^{\frac{1}{2}}) \int_0^T \norm{\nabla\phi}^2 dt + C\e \int_0^T \norm{\nabla\psi}^2 dt + C(\e+\delta^{\frac{1}{2}}). \label{I-5}
\end{align}
And for the most difficult term $ I_4, $ decompose $ \psi = \sum\limits_{k=1}^3 \psi^{(k)} $ as in \cref{Lem-GN} and the a priori assumption \cref{apriori} yield that
\begin{align}
\int_0^T \norm{I_4}_{L^1(\Omega)} dt & \leq C \int_0^T \int_\Omega \abs{\nabla\psi} \abs{\nabla \phi}^2 dx \notag \\
& \leq C \sum_{k=1}^3 \int_0^T \norm{\nabla\psi^{(k)}}_{L^\infty(\Omega)} \norm{\nabla\phi}^2 dt \notag \\
& \leq C \int_0^T \left( \norm{\nabla^2 \psi }^{\frac{1}{2}} \norm{\nabla\psi}^{\frac{1}{2}} + \norm{\nabla^2 \psi } + \norm{\nabla^3 \psi }^{\frac{3}{4}} \norm{\nabla\psi}^{\frac{1}{4}} \right) \norm{\nabla\phi}^2 dt \notag \\
& \leq C \int_0^T \left( \norm{\nabla\psi}_1 + \norm{\nabla^3 \psi} \right)\norm{\nabla\phi}^2 dt \notag \\
& \leq C N(T) \int_0^T \norm{\nabla\phi}^2 dt + C N(T) \int_0^T \norm{\nabla^3 \psi} \norm{\nabla\phi} dt \notag \\
& \leq C \nu_0 \int_0^T \norm{\nabla\phi}^2 dt + C \nu_0 \int_0^T \norm{\nabla^3 \psi}^2 dt. \label{I-4}
\end{align}
Thus, by integrating \cref{eq-8} over $  \Omega\times (0,T), $ using \cref{I-2,I-3,I-4,I-5} and applying \cref{Lem-est-0}, one can finish the proof.

\end{proof}

\vspace{0.3cm}

\begin{Lem}\label{Lem-est-2}
Under the assumptions of \cref{Thm-apriori}, if $ \delta_0>0, \e_0>0 $ and $ \nu_0>0 $ are small enough, then
\begin{equation}\label{eq-est-2}
\sup_{t\in(0,T)} \norm{\nabla\psi}^2 + \int_0^T \norm{\nabla^2 \psi}^2 dt \leq C \norm{\phi_0, \psi_0}_1^2 + C(\e+\delta^{\frac{1}{4}}) + C \nu_0 \int_0^T \norm{\nabla^3 \psi}^2 dt.
\end{equation}	
\end{Lem}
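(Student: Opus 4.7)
\smallskip

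\noindent\textbf{Proof plan.} The natural strategy is to derive the $H^1$--energy identity for $\psi$ by multiplying the momentum equation \cref{eq-pertur-2} by $-\Delta\psi$ and integrating over $\Omega$. The goal is to produce the dissipation $\int_0^T\norm{\nabla^2\psi}^2 dt$ from the viscous terms, and to absorb every error either into the known dissipations of Lemmas \ref{Lem-est-0} and \ref{Lem-est-1}, into the small factor $N(T)\leq\nu_0$, or into the exponential/algebraic decay of the error terms $\f,\g,h_0$ and $\p_1^2\ut_1^r$.

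\smallskip

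\noindent\textbf{Main terms.} First I handle the viscous part. Integration by parts on $\R\times\Torus^2$ gives $\int|\Delta\psi|^2 dx=\int|\nabla^2\psi|^2 dx$ and, using $\Delta\psi=\nabla\dv\psi-\mathrm{curl}\,\mathrm{curl}\,\psi$ together with $\mathrm{curl}\,\nabla=0$, also $\int\nabla\dv\psi\cdot\Delta\psi\, dx=\int|\nabla\dv\psi|^2 dx$. Hence the viscous contribution produces the positive term $\mu\int|\nabla^2\psi|^2 dx+(\mu+\lambda)\int|\nabla\dv\psi|^2 dx$ on the LHS. Next, the time derivative term becomes $\frac{d}{dt}\int\frac{\rho}{2}|\nabla\psi|^2 dx$ plus an error $\tfrac12\int\p_t\rho|\nabla\psi|^2 dx+\int\p_t\psi\cdot(\nabla\rho\cdot\nabla\psi)dx$; the first piece combines with the convection term $-\int\rho\uv\cdot\nabla\psi\cdot\Delta\psi dx$ after an IBP and the use of the continuity equation, while the second is handled by substituting $\p_t\psi$ from \cref{eq-pertur-2} and using that $\norm{\nabla\rho}_{L^\infty}\le C(\delta+\e+\nu_0)$ is small.

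\smallskip

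\noindent\textbf{Coupling, rarefaction, and source terms.} The pressure coupling is bounded by Cauchy--Schwarz, $|\int p'(\rho)\nabla\phi\cdot\Delta\psi\,dx|\le\eta\norm{\nabla^2\psi}^2+C_\eta\norm{\nabla\phi}^2$, and then $\int_0^T\!\norm{\nabla\phi}^2 dt$ is absorbed by Lemma \ref{Lem-est-1}, which is precisely the mechanism that brings the $C\nu_0\int_0^T\norm{\nabla^3\psi}^2 dt$ term into the final inequality. The reaction term $\rho(\psi\cdot\nabla\uvt)\cdot(-\Delta\psi)$ splits into a main piece involving $\p_1\ut_1^r\psi_1$, bounded by $\int_0^T\norm{(\p_1\ut_1^r)^{1/2}\psi_1}^2 dt$ from Lemma \ref{Lem-est-0}, and an exponentially small remainder from $\zv^\pm$. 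The term with $\nabla\rhot$ is controlled similarly using $\p_1\rhot^r\le C\p_1\ut_1^r$ and the factor $(p'(\rho)-\tfrac\rho\rhot p'(\rhot))=O(\phi)$. The rarefaction source $(2\mu+\lambda)\p_1^2\ut_1^r\E_1\cdot(-\Delta\psi)$ reduces to $\int\p_1^2\ut_1^r\,\p_1^2\psi_1\,dx$ because the transverse pieces $\int\p_1^2\ut_1^r\,\p_j^2\psi_1\,dx$ with $j=2,3$ vanish by IBP on $\Torus^2$ (since $\p_1^2\ut_1^r$ is $x_2,x_3$-independent); together with Lemma \ref{Lem-rare} and Young's inequality this produces a harmless $C\delta^{1/4}$ contribution, in the same spirit as \cref{eq-4}. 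Finally, $\f\cdot(-\Delta\psi)$ and $\phi\g\cdot(-\Delta\psi)$ decay exponentially by Lemma \ref{Lem-h} and \cref{g-f}.

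\smallskip

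\noindent\textbf{The delicate step.} The genuine difficulty is the cubic term $\int\rho(\psi\cdot\nabla\psi)\cdot(-\Delta\psi)dx$ coming from writing $\uv=\uvt+\psi$. On the unbounded domain $\Omega=\R\times\Torus^2$ the standard 3-d Sobolev inequality is unavailable; as in \cref{I-4}, I instead decompose $\nabla\psi=\sum_{k=1}^{3}(\nabla\psi)^{(k)}$ via Lemma \ref{Lem-GN}, which gives bounds of the form $\norm{\nabla\psi}_{L^\infty}\le C(\norm{\nabla^2\psi}+\norm{\nabla^3\psi}^{3/4}\norm{\nabla\psi}^{1/4})$ modulo lower-order pieces. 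Applying Cauchy--Schwarz and Young to $\int|\psi||\nabla\psi||\Delta\psi|dx$ and $\int|\nabla\psi||\nabla\psi||\Delta\psi|dx$ and using $\norm{\psi}_{L^\infty}+\norm{\nabla\psi}_{L^2}\le CN(T)\le C\nu_0$ produces the target right-hand side term $C\nu_0\int_0^T\norm{\nabla^3\psi}^2 dt$ together with an absorbable $\eta\int_0^T\norm{\nabla^2\psi}^2 dt$. Integrating the resulting differential inequality over $(0,T)$, invoking Lemmas \ref{Lem-est-0} and \ref{Lem-est-1} to close the lower-order quantities, and choosing $\eta,\delta_0,\e_0,\nu_0$ small yields \cref{eq-est-2}.
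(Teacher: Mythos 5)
Your overall strategy is the paper's: test the momentum equation with a multiple of $-\lap\psi$, extract $\mu\norm{\nabla^2\psi}^2$ and $\abs{\nabla\dv\psi}^2$ as dissipation, absorb the pressure coupling through $\int_0^T\norm{\nabla\phi}^2dt$ from \cref{Lem-est-1} (which is indeed how $C\nu_0\int_0^T\norm{\nabla^3\psi}^2dt$ enters), control the $\p_1\ut_1^r$ and $\f,\g,h_0$ terms exactly as you say, and use \cref{Lem-GN} for the derivative-cubic terms. The one genuine flaw is your treatment of the commutators created by keeping the weight $\rho$ in the energy: you control $\int\p_t\psi\cdot(\nabla\rho\cdot\nabla\psi)\,dx$ by asserting $\norm{\nabla\rho}_{L^\infty}\leq C(\delta+\e+\nu_0)$. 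That bound is not available: $\nabla\rho=\nabla\rhot+\nabla\phi$ and $\nabla\phi$ is only in $H^1(\Omega)$; since $H^2\not\hookrightarrow W^{1,\infty}$ in three dimensions (and \cref{Lem-GN} would require third derivatives of $\phi$, which are not in $N(T)$), $\norm{\nabla\phi}_{L^\infty}$ cannot be bounded by $N(T)$. The a priori assumption only gives $\norm{\phi,\psi}_{L^\infty}\leq CN(T)$ and, via \cref{L4}, $\norm{\nabla\phi}_{L^4(\Omega)}\leq CN(T)$, and the paper is careful never to put $L^\infty$ on $\nabla\phi$.

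The step is repairable, and the repair is already in your toolkit: after substituting $\p_t\psi$ from \cref{eq-pertur-2}, the worst contributions are of the form $\int\abs{\nabla^2\psi}\,\abs{\nabla\phi}\,\abs{\nabla\psi}\,dx$ and $\int\abs{\nabla\psi}^2\abs{\nabla\phi}\,dx$; estimate these by placing $L^\infty$ (or $L^4$) on the $\nabla\psi$ factors via \cref{Lem-GN}, using the $\nabla^3\psi$ (resp.\ $\nabla^2\psi$) dissipation, while keeping $\nabla\phi$ in $L^2$ (or $L^4$), exactly as the paper does for its term $I_9=-\frac{\mu+\lambda}{\rho^2}\dv\psi\,\nabla\phi\cdot(\lap\psi-\nabla\dv\psi)$. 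Alternatively, adopt the paper's multiplier $-\lap\psi/\rho$: then the time term is weight-free, $-\p_t\psi\cdot\lap\psi=\tfrac12\p_t\abs{\nabla\psi}^2-\dv(\nabla\psi\,\p_t\psi)$, the problematic commutator never appears, and the only price is the commutator $I_9$ above from the $\nabla\dv\psi\cdot\lap\psi$ term. Relatedly, the term you single out as ``the genuine difficulty,'' $\int\rho(\psi\cdot\nabla\psi)\cdot\lap\psi\,dx$, is actually benign, since $\norm{\psi}_{L^\infty}\leq CN(T)$ makes it directly absorbable; in your weighted setup the real difficulty is precisely the $\nabla\phi$-bearing commutators you dismissed with the invalid $L^\infty$ bound.
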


\begin{proof}

Multiplying $ -\frac{\lap \psi}{\rho} \cdot $ on  \cref{eq-pertur-2} yields that
\begin{align*}
& -\p_t\psi \cdot\lap \psi - (\uv\dnab \psi) \cdot\lap \psi - (\psi \dnab\uvt) \cdot \lap\psi - \frac{p'(\rho)}{\rho} \nabla\phi\cdot\lap \psi  \\
& \qquad - \Big(\frac{p'(\rho)}{\rho} - \frac{p'(\rhot)}{\rhot}\Big) \nabla\rhot\cdot\lap\psi + \frac{\mu}{\rho} \abs{\lap\psi}^2 + \frac{\mu+\lambda}{\rho} \nabla\dv\psi\cdot\lap\psi \\
& \quad = - \frac{1}{\rho}\f \cdot\lap \psi + \frac{\phi}{\rho} \g\cdot \lap \psi - \frac{2\mu+\lambda}{\rho} \p_1 ^2\ut_1^r \lap\psi_1.
\end{align*}
Note that $ -\p_t\psi \cdot\lap \psi = \frac{1}{2} \p_t \left(\abs{\nabla\psi}^2\right) - \dv \left(\nabla\psi \p_t\psi\right) $ and
\setlength{\abovedisplayskip}{3pt}
\begin{align}
\frac{1}{\rho} \nabla\dv\psi\cdot\lap\psi =~& \frac{1}{\rho} \abs{\nabla\dv\psi}^2 + \frac{1}{\rho} \nabla\dv\psi\cdot(\lap\psi-\nabla\dv\psi) \notag \\
=~& \frac{1}{\rho} \abs{\nabla\dv\psi}^2 + \frac{1}{\rho} \dv\left[ \dv\psi \left(\lap\psi-\nabla\dv\psi \right) \right] \notag \\
=~& \frac{1}{\rho} \abs{\nabla\dv\psi}^2 + \dv\Big[\frac{\dv\psi}{\rho}  \left(\lap\psi-\nabla\dv\psi \right) \Big] \notag \\
& + \frac{\dv\psi}{\rho^2} (\nabla\rhot+\nabla\phi) \cdot \left( \lap \psi - \nabla \dv\psi \right). \label{hot}
\end{align}
Then one has that
\begin{equation}\label{eq-10}
\frac{1}{2} \p_t \abs{\nabla\psi}^2 + \frac{\mu}{\rho} \abs{\lap\psi}^2 + \frac{\mu+\lambda}{\rho} \abs{\nabla\dv\psi}^2 = \dv Q_4 + \sum\limits_{j=6}^9 I_j,
\end{equation}
where $ Q_4 = \nabla\psi \p_t\psi - \frac{\mu+\lambda}{\rho} \dv\psi \left(\lap\psi-\nabla\dv\psi \right) $ and
\begin{align*}
I_6 & = (\uv\dnab \psi) \cdot \lap \psi + \frac{p'(\rho)}{\rho} \nabla\phi\cdot\lap\psi-\frac{\mu+\lambda}{\rho^2} \dv\psi \nabla\rhot \cdot \left( \lap \psi - \nabla \dv\psi \right), \\
I_7 & = (\psi\dnab \uvt) \cdot\lap\psi + \Big(\frac{p'(\rho)}{\rho} - \frac{p'(\rhot)}{\rhot}\Big) \nabla\rhot\cdot\lap\psi + \frac{\phi}{\rho} \g\cdot\lap\psi , \\
I_8 & = - \frac{1}{\rho} \f\cdot \lap\psi - \frac{2\mu+\lambda}{\rho} \p_1 ^2 \ut_1^r \lap\psi_1, \\
I_9 & =  - \frac{\mu+\lambda}{\rho^2} \dv\psi \nabla\phi\cdot(\lap\psi-\nabla\dv\psi).
\end{align*}
Since $ \int_\Omega \frac{\mu}{\rho} \abs{\lap \psi}^2 dx \geq C \norm{\lap \psi}^2 \geq a_0 \norm{\nabla^2\psi}^2 $ for some $ C>0 $ and $ a_0>0, $ which depend only on $ \mu \big(\inf\limits_{x,t} \rho(x,t) \big)^{-1}, $
then similar to the proof of \cref{Lem-est-1}, one can get that
\begin{equation*}
\begin{aligned}
\int_0^T \norm{I_6}_{L^1(\Omega)} dt & \leq \frac{a_0}{8} \int_0^T \norm{\nabla^2 \psi}^2 dt + C \int_0^T \norm{\nabla\left(\phi,\psi\right)}^2dt, \\
\int_0^T \norm{I_7}_{L^1(\Omega)} dt & \leq C\e \sup_{t\in(0,T)} \norm{\phi,\psi}^2 + C(\e+\delta) \int_0^T \norm{\nabla^2 \psi}^2 dt + C \int_0^T \norm{\left(\p_1 \ut_1^r\right)^{\frac{1}{2}} (\phi,\psi_1)}^2 dt, \\
\int_0^T \norm{I_8}_{L^1(\Omega)} dt & \leq \frac{a_0}{8} \int_0^T \norm{\nabla^2 \psi}^2 dt + C (\e+\delta), \\
\int_0^T \norm{I_9}_{L^1(\Omega)} dt & \leq C \int_0^T \norm{\dv\psi}_{L^\infty(\Omega)} \norm{\nabla \phi}\norm{\nabla^2\psi} dt \\
& \leq C \int_0^T  \left( \norm{\nabla\psi}_1 + \norm{\nabla^3\psi} \right) \norm{\nabla \phi}\norm{\nabla^2\psi} dt \\
& \leq C N(T) \int_0^T \left( \norm{\nabla\psi}_1 \norm{\nabla^2\psi} + \norm{\nabla^2\psi} \norm{\nabla^3\psi} \right) dt \\
& \leq C \nu_0 \int_0^T \norm{\nabla \psi}_1^2 dt + C \nu_0 \int_0^T \norm{\nabla^3 \psi}^2 dt.
\end{aligned}
\end{equation*}
Then by integrating \cref{eq-10} over $ \Omega\times(0,T) $ and applying Lemmas \ref{Lem-est-0} and \ref{Lem-est-1}, one can finish the proof.

\end{proof}

\vspace{0.3cm}

\begin{Lem}\label{Lem-est-3}
Under the assumptions of \cref{Thm-apriori}, if $ \delta_0>0, \e_0>0 $ and $ \nu_0>0 $ are small enough, then 
\begin{equation}\label{eq-est-3}
\sup_{t\in(0,T)} \norm{\nabla^2\phi}^2 + \int_0^T \norm{\nabla^2 \phi}^2 dt \leq C \norm{\phi_0}_2^2 + \norm{\psi_0}_1^2 + C(\e+\delta^{\frac{1}{4}}) + C \nu_0 \int_0^T \norm{\nabla^3 \psi}^2 dt.
\end{equation}
\end{Lem}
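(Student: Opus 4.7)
The proof parallels that of \cref{Lem-est-1} one derivative order higher. I would apply $\nabla^2$ to the perturbed continuity equation \cref{eq-pertur-1} and contract the result with $\frac{\nabla^2 \phi}{\rho^2}$; separately, I would apply $\nabla$ to the perturbed momentum equation \cref{eq-pertur-2} and contract with $\frac{\nabla^2 \phi}{\rho}$. In each case the principal contribution is the cross term $\frac{1}{\rho}\nabla^2\dv\psi : \nabla^2\phi$: it appears with coefficient $+1$ in the first computation (coming from $\nabla^2(\rho\dv \psi)$) and with coefficient $-(2\mu+\lambda)$ in the second (coming from $\nabla(\mu\lap\psi+(\mu+\lambda)\nabla\dv\psi)$ after the same curl-identity manipulation as in \cref{eq-7}). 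Multiplying the first identity by $(2\mu+\lambda)$ and adding to the second then cancels this worst cross term, in direct analogy with how \cref{eq-8} was produced. After the cancellation one arrives at a schematic identity
\begin{equation*}
\p_t \Big( \frac{2\mu+\lambda}{2\rho^2} \abs{\nabla^2 \phi}^2 + \nabla \psi : \nabla^2 \phi \Big) + \frac{p'(\rho)}{\rho} \abs{\nabla^2 \phi}^2 = \dv Q_5 + \sum_{j=10}^{13} I_j,
\end{equation*}
where the $I_j$ collect commutator and lower-order error terms.

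Integrating over $\Omega\times(0,T)$, the divergence drops; the $\p_t$ contribution yields $\sup_{t\in(0,T)}\norm{\nabla^2\phi}^2$ once the indefinite cross term is absorbed via $\abs{\int_\Omega \nabla\psi:\nabla^2\phi\,dx} \leq \frac{1}{8}\norm{\nabla^2\phi}^2 + C\norm{\nabla\psi}^2$, since $\sup_t\norm{\nabla\psi}^2$ is already controlled by \cref{Lem-est-2}; and the dissipative term $\frac{p'(\rho)}{\rho}\abs{\nabla^2\phi}^2$ provides $\int_0^T\norm{\nabla^2\phi}^2\,dt$ on the left. The $I_j$ are then grouped following the classification already established in \cref{Lem-est-1}: terms quadratic in first-order derivatives (analog of $I_2$) absorb into $\int_0^T \norm{\nabla(\phi,\psi)}_1^2\,dt$, already bounded; terms carrying background derivatives (analog of $I_3$), which now include $\p_1^2\rhot^r$, $\p_1^2\ut_1^r$, $\p_1^3\rhot^r$ and $\p_1^3\ut_1^r$, are handled by the same G-N decomposition $\psi=\sum_{k=1}^3\psi^{(k)}$ of \cref{Lem-GN} combined with the $m=2,3$ cases of \cref{Lem-rare}iv), producing an $O(\delta^{1/4})$ contribution plus a small multiple of $\int_0^T\norm{\nabla^2\psi}^2\,dt$ exactly as in \cref{eq-4}; source-type terms (analog of $I_5$), now involving $\nabla^2 h_0$, $\nabla\f$ and $\nabla\g$, decay exponentially by \cref{Lem-h}; and the cubic perturbation terms (analog of $I_4$) are absorbed into $C\nu_0\int_0^T\norm{\nabla^3\psi}^2\,dt$ by placing $\nabla^2\psi$ in $L^\infty(\Omega)$ through $\norm{\nabla^2\psi}_{L^\infty(\Omega)} \leq C(\norm{\nabla\psi}_1+\norm{\nabla^3\psi})$ from \cref{Lem-GN}, together with the smallness $N(T)\leq \nu_0$.

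The main obstacle is the bookkeeping of the new cubic and high-derivative cross terms produced by the commutators $[\nabla^2,\rho]\dv\psi$, $[\nabla,\rho]\p_t\psi$ and $[\nabla,\rho]\,\uv\dnab\psi$ when contracted against $\nabla^2\phi$: each such term carries three derivatives distributed over three factors, and one must carefully place the lowest-order factor (typically $\psi$ itself, or $\phi$) in $L^\infty(\Omega)$ via the G-N decomposition on $\Omega=\R\times\Torus^2$, pair the two remaining factors in $L^2(\Omega)$, and route the unique $\nabla^3\psi$-bearing contributions into the reservoir $C\nu_0\int_0^T\norm{\nabla^3\psi}^2\,dt$. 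A second technical point is the nonlinear coefficient $\frac{p'(\rho)}{\rho}-\frac{p'(\rhot)}{\rhot}$ after differentiation: its $\nabla$-derivative produces extra factors of $\nabla\phi$ and $\nabla\rhot$, but since $\norm{\nabla\rhot}_{L^\infty}\leq C(\delta+\e)$ these can be swallowed into the smallness parameters. Once this bookkeeping is complete, combining with the estimates from Lemmas~\ref{Lem-est-0}, \ref{Lem-est-1} and \ref{Lem-est-2} yields \cref{eq-est-3}.
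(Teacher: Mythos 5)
Your proposal matches the paper's own proof in all essentials: the paper likewise applies $\nabla\p_i$ to \cref{eq-pertur-1} tested against $\frac{\nabla\p_i\phi}{\rho^2}$ and $\p_i$ to $\frac{1}{\rho}\cdot$\cref{eq-pertur-2} tested against $\nabla\p_i\phi$, cancels the highest cross term $\frac{2\mu+\lambda}{\rho}\nabla\dv\p_i\psi\cdot\nabla\p_i\phi$ by adding $(2\mu+\lambda)\cdot$\cref{eq-11} to \cref{eq-14} and summing over $i$, and then handles the commutator, background, source and cubic terms exactly as you describe (G-N decomposition, the $L^4$ interpolation \cref{L4}, \cref{Lem-rare}, \cref{Lem-h}, and absorption of $\nabla^3\psi$-terms via $N(T)\leq\nu_0$), finishing with Lemmas \ref{Lem-est-0}--\ref{Lem-est-2}. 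The argument is correct and essentially identical to the paper's.
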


\begin{proof}
Let $ i \in \{1,2,3\} $ be fixed.
Taking the second derivative $ \nabla \p_i  $ on \cref{eq-pertur-1}, then multiplying the result by $ \cdot \frac{\nabla \p_i  \phi}{\rho^2} $ and using the fact
\begin{align*}
- \frac{\p_t\rho}{\rho^3} \abs{\nabla \p_i  \phi}^2 + \frac{1}{2\rho^2} \left(\uv\dnab\right) \abs{\nabla \p_i  \phi}^2 = \dv\Big( \frac{\abs{\nabla \p_i  \phi}^2}{2\rho^2} \uv \Big) - \frac{3}{2\rho^2} \dv \uv \abs{\nabla \p_i  \phi}^2,
\end{align*}
one can get that
\begin{equation}\label{eq-11}
\begin{aligned}
& \p_t \Big(\frac{\abs{\nabla \p_i  \phi}^2}{2\rho^2}\Big)
+ \frac{1}{\rho} \nabla \p_i  \dv\psi \cdot \nabla \p_i  \phi = - \dv\Big( \frac{\abs{\nabla \p_i  \phi}^2}{2\rho^2} \uv \Big) + I_{10} + I_{11},
\end{aligned}
\end{equation}
where 
\begin{align}
I_{10} =~& \frac{3}{2\rho^2} \dv \uv \abs{\nabla \p_i  \phi}^2 - \frac{1}{\rho^2} \left[\nabla \p_i  (\rho \dv\psi) - \rho \nabla \p_i  \dv\psi \right] \cdot \nabla \p_i  \phi \notag \\ 
& - \frac{1}{\rho^2} \left[\nabla \p_i  (\uv\dnab\phi) - (\uv \dnab) \nabla \p_i  \phi\right] \cdot \nabla \p_i  \phi \notag \\
& - \frac{1}{\rho^2} \left[\nabla \p_i  (\phi \dv\uvt) - \phi \nabla\p_i  \dv\uvt \right] \cdot \nabla \p_i  \phi - \frac{1}{\rho^2} \left[\nabla \p_i  (\nabla \rhot \cdot \psi) - \nabla^2 \p_i  \rhot \psi \right] \cdot \nabla \p_i  \phi \notag \\
I_{11} =~& - \frac{\phi}{\rho^2} \nabla\p_i  \dv\uvt \cdot \nabla\p_i  \phi - \frac{1}{\rho^2} \nabla^2 \p_i  \rhot \psi \cdot \nabla \p_i  \phi - \frac{1}{\rho^2} \nabla \p_i  h_0 \cdot \nabla \p_i  \phi. \notag
\end{align}
Note that $ \frac{1}{\rho} \nabla\p_i  \dv\psi \dnab \p_i \phi $ is a high-order term in \cref{eq-11}, which can be canceled by the equation \cref{eq-pertur-2}. And similar to the proof of Lemmas \ref{Lem-est-0} to \ref{Lem-est-2}, the other lower-order terms satisfy that
\begin{align*}
\abs{I_{10}} \leq~& C \abs{\nabla^2\phi} \Big[ (\abs{\nabla\uvt}+\abs{\nabla\psi}) \abs{\nabla^2 \phi} + (\abs{\nabla^2\rhot} + \abs{\nabla^2 \phi}) \abs{\nabla\psi} + (\abs{\nabla \rhot} + \abs{\nabla \phi}) \abs{\nabla^2 \psi} \\
& \qquad \qquad  + (\abs{\nabla^2 \uvt} + \abs{\nabla^2\psi}) \abs{\nabla\phi} \Big] \\
\leq~& C (\e+\delta) \abs{\nabla^2\phi} \left( \abs{\nabla^2 \phi} + \abs{\nabla\psi} + \abs{\nabla^2\psi} + \abs{\nabla\phi} \right) + C \abs{\nabla\psi} \abs{\nabla^2\phi}^2 \\
& + C \abs{\nabla\phi} \abs{\nabla^2\psi} \abs{\nabla^2 \phi}, \\
\abs{I_{11}} \leq~& C \norm{\phi,\psi}_{L^\infty} \left(\abs{\p_1^2\p_i \ut_1^r} + \abs{\p_1^2\p_i \rhot_1^r} \right) \abs{\nabla^2 \phi} + C \e e^{-\alpha t} (\abs{\phi}+\abs{\psi}) \abs{\nabla^2 \phi} + C \abs{\nabla^2 h_0} \abs{\nabla^2\phi}.
\end{align*}
For $ I_{10}, $ first note that by \cref{Lem-GN}, one has
\begin{equation}\label{L4}
\begin{aligned}
\norm{\nabla \phi}_{L^4(\Omega)} & \leq C \sum\limits_{k=1}^3 \norm{\nabla^2 \phi}^{k/4} \norm{\nabla \phi}^{1-k/4} \leq C N(T), \\
\norm{\nabla^2\psi}_{L^4(\Omega)} & \leq C \sum\limits_{k=1}^3 \norm{\nabla^3 \psi}^{k/4} \norm{\nabla^2 \psi}^{1-k/4}.
\end{aligned}
\end{equation}
Then combining the a priori assumption \cref{apriori}, one has that 
\begin{align}
\int_0^T \int_\Omega \abs{\nabla\phi} \abs{\nabla^2\psi} \abs{\nabla^2 \phi} dxdt & \leq \int_0^T \norm{\nabla\phi}_{L^4(\Omega)} \norm{\nabla^2 \psi}_{L^4(\Omega)} \norm{\nabla^2 \phi} dt \notag \\ 
& \leq C N(T) \int_0^T \sum\limits_{k=1}^3  \norm{\nabla^3\psi}^{k/4} \norm{\nabla^2\psi}^{1-k/4} \norm{\nabla^2\phi} dt \notag \\
& \leq C \nu_0 \int_0^T \left( \norm{\nabla^3 \psi}^2 + \norm{\nabla^2 \psi}^2 +\norm{\nabla^2 \phi}^2  \right) dt \notag \\
& \leq C \nu_0 \int_0^T \norm{\nabla^2 (\phi,\psi)}^2 dt + C \nu_0 \int_0^T \norm{\nabla^3 \psi}^2 dt, \label{est-high}
\end{align}
where the Holder inequality for $ 1=\frac{k}{8} + \frac{4-k}{8} +\frac{1}{2} $ is used.
Thus, it holds that
\begin{align}
\int_0^T \norm{I_{10}}_{L^1(\Omega)} dt \leq~& C(\e+\delta) \int_0^T \norm{\nabla \left(\phi,\psi\right)}_1^2 dt + C \int_0^T \norm{\nabla\psi}_{L^\infty(\Omega)} \norm{\nabla^2\phi}^2 dt \notag \\
& + C \nu_0 \int_0^T \norm{\nabla^2 (\phi,\psi)}^2 dt + C \nu_0 \int_0^T \norm{\nabla^3 \psi}^2 dt \notag \\
\leq~& C(\e+\delta+\nu_0) \int_0^T \norm{\nabla \left(\phi,\psi\right)}_1^2 dt  + C \nu_0 \int_0^T \norm{\nabla^3 \psi}^2 dt \notag \\
& + C N(T) \int_0^T \left( \norm{\nabla\psi}_1 + \norm{\nabla^3\psi} \right) \norm{\nabla^2\phi} dt \notag \\
\leq~& C(\e+\delta+\nu_0) \int_0^T \norm{\nabla \left(\phi,\psi\right)}_1^2 dt  + C \nu_0 \int_0^T \norm{\nabla^3 \psi}^2 dt. \label{I-10}
\end{align}

For $ I_{11}, $ one has that 
\begin{align}
\int_0^T \norm{I_{11}}_{L^1(\Omega)} dt
\leq~& C N(T) \int_0^T \min\{\delta,(1+t)^{-1}\} \norm{\nabla^2 \phi} dt \notag \\
& + C \e \int_0^T  e^{-\alpha t} \norm{\phi,\psi} \norm{\nabla^2 \phi} dt + C\e \int_0^T e^{-\alpha t} \norm{\nabla^2 \phi} dt \notag\\
\leq~& C (\e+\nu_0) \int_0^T \norm{\nabla^2 \phi}^2 dt + C(\e+\delta) +  C \e \sup_{t\in(0,T)} \norm{\phi,\psi}^2 . \label{I-11}
\end{align}

\vspace{0.3cm}

For fixed $ i \in \{1,2,3\}, $ taking the derivative $ \p_i  $ on $ \frac{1}{\rho}\cdot$\cref{eq-pertur-2}, then multiplying the result by $ \cdot \nabla \p_i  \phi, $ and using the fact that
\begin{align*}
\p_t \p_i \psi \dnab \p_i \phi & = \p_t\left( \p_i \psi \cdot\nabla\p_i  \phi \right) - \dv\left( \p_i \p_t \phi \p_i \psi \right) + \p_i \p_t\phi \dv\p_i \psi \\
& = \p_t\left( \p_i \psi \cdot\nabla\p_i  \phi \right) - \dv\left( \p_i \p_t \phi \p_i \psi \right) \\
& \quad - \p_i  \left( h_0 + \rho \dv\psi + \uv\dnab\phi+ \dv\uvt\phi +\nabla\rhot\cdot\psi \right) \dv\p_i \psi, 
\end{align*}
and 
\begin{align*}
& \frac{1}{\rho} \left[ \mu \lap \p_i \psi + (\mu+\lambda) \nabla\dv \p_i \psi \right] \cdot \nabla \p_i \phi \\
&\quad = \frac{2\mu+\lambda}{\rho} \nabla\dv \p_i \psi \cdot \nabla\p_i \phi + \mu \dv \left(\frac{ \nabla \p_i \phi \times \text{curl} \p_i \psi }{\rho} \right) + \frac{\mu \nabla \rho \cdot \left(\nabla \p_i \phi \times \text{curl} \p_i \psi \right) }{\rho^2}
\end{align*}
(which is similar to \cref{eq-7}), one can get that
\begin{equation}\label{eq-14}
\p_t\left( \p_i \psi \cdot\nabla\p_i  \phi \right) + \frac{p'(\rho)}{\rho} \abs{\nabla\p_i  \phi}^2 - \frac{2\mu+\lambda}{\rho} \nabla \dv \p_i \psi \dnab\p_i  \phi = \dv Q_5 + \sum\limits_{j=12}^{14} I_j,
\end{equation}
where $ Q_5 = \p_i \p_t \phi \p_i \psi- \frac{\mu}{\rho} \nabla\p_i  \phi\times \text{curl}\p_i  \psi $ and
\begin{align*}
I_{12} &= \p_i  \left( \frac{\f-\phi\g}{\rho} \right) \dnab\p_i  \phi + \p_i  \left(\frac{2\mu+\lambda}{\rho} \p_1 ^2\ut_1^r \right) \p_{1i} \phi + \p_i  h_0 \dv\p_i \psi, \\
I_{13} &= \p_i  \left( \dv\uvt\phi +\nabla\rhot\cdot\psi \right) \dv\p_i \psi + \p_i  \left( \psi\dnab\uvt \right)\cdot\nabla\p_i \phi \\
& \quad + \p_i  \left[ \left(\frac{p'(\rho)}{\rho} - \frac{p'(\rhot)}{\rhot}\right) \nabla\rhot \right] \cdot \nabla\p_i \phi, \\
I_{14} &= \p_i  \left( \rho \dv\psi + \uv\dnab\phi \right) \dv\p_i \psi + \p_i  \left(\uv\dnab\psi \right)\cdot\nabla\p_i \phi + \p_i  \left(\frac{p'(\rho)}{\rho}\right) \nabla\phi\cdot\nabla\p_i \phi \\
& \quad - \p_i  \left(\frac{\mu}{\rho}\right)\lap\psi \dnab\p_i  \phi - \p_i  \left(\frac{\mu+\lambda}{\rho}\right) \nabla\dv\psi \dnab\p_i  \phi  + \frac{\mu \nabla \rho \cdot \left(\nabla \p_i \phi \times \text{curl} \p_i \psi \right) }{\rho^2}.
\end{align*}
Similar to the estimates of \cref{I-10,I-11}, one can get that
\begin{align*}
\int_0^T \norm{I_{12}}_{L^1(\Omega)} dt \leq~& C \int_0^T \left(\norm{\nabla(\f-\phi\g)} +  \norm{\p_1 ^2\p_i \ut_1^r}  \right)\norm{\nabla^2\phi} dt \\
& + C \int_0^T \int_\Omega \left(\abs{\f-\phi \g} + \abs{\p_1^2 \ut_1^r} \right) (\e+\delta+\abs{\nabla \phi}) \abs{\nabla^2\phi} dx dt \\
&  + \int_0^T \norm{\nabla h_0} \norm{\nabla^2\psi} dt \\
\leq~& C \int_0^T \left(\norm{\f-\phi\g}_1 +  \norm{\p_1 ^2 \ut_1^r}_1  \right)\norm{\nabla^2\phi} dt \\
& + C \int_0^T \left(\norm{\f-\phi \g}_{L^\infty} + \delta \right)\norm{\nabla \phi} \norm{\nabla^2\phi} dt + C\e \int_0^T e^{-\alpha t} \norm{\nabla^2\psi} dt \\
\leq~& C \int_0^T \left( \e e^{-\alpha t} + \e e^{-\alpha t} N(T) + \min\{\delta,(1+t)^{-1}\}  \right)\norm{\nabla^2\phi} dt \\
& + C \int_0^T \left(\e + \e N(T) + \delta\right) \norm{\nabla\phi} \norm{\nabla^2\phi} dt + C\e + C\e \int_0^T \norm{\nabla^2\psi}^2 dt \\
\leq~& C(\e+\delta^{\frac{1}{2}}) \int_0^T \norm{\nabla^2\phi}^2 dt + C(\e+\delta^{\frac{1}{2}}) + C \int_0^T \norm{\nabla\phi,\nabla^2\psi}^2 dt;\\
\int_0^T \norm{I_{13}}_{L^1(\Omega)} dt \leq~& C\int_0^T \int_\Omega \left( \abs{\p_1^2 \ut_1^r} + \abs{\p_1^2 \rhot^r} + \abs{\p_1 \rhot^r}^2 +\e e^{-\alpha t} \right) \\
& \qquad \qquad \times (\abs{\phi}+\abs{\psi}) (\abs{\nabla^2\psi} + \abs{\nabla^2 \phi}) dxdt \\
& + C(\e+\delta) \int_0^T \norm{\nabla(\phi,\psi)} \norm{\nabla^2(\psi, \phi)} dt \\
\leq~& C N(T) \int_0^T \left(\min\{\delta,(1+t)^{-1}\} + \min\{\delta^2,\delta^{\frac{1}{2}} (1+t)^{-\frac{3}{2}}\} \right) \norm{\nabla^2(\phi,\psi)} dt \\
&  + C\e \sup_{t\in(0,T)} \norm{\phi,\psi}^2 + C(\e+\delta)\int_0^T \norm{\nabla^2(\psi,\phi)}^2 dt + C\int_0^T \norm{\nabla(\phi,\psi)}^2 dt \\
\leq~& C (\e+\delta+\nu_0) \int_0^T \norm{\nabla^2 \phi}^2 dt + C \delta + C\e \sup_{t\in(0,T)} \norm{\phi,\psi}^2  \\
& + C \int_0^T (\norm{\nabla\phi}^2 + \norm{\nabla\psi}_1^2) dt; \\
\int_0^T \norm{I_{14}}_{L^1(\Omega)} dt \leq~& C \int_0^T \int_\Omega \Big[ \abs{\nabla^2\psi} \left( \abs{\nabla^2\psi}+\abs{\nabla^2 \phi} +\abs{\nabla\psi} + \abs{\nabla \phi} \right) \\
& \qquad \qquad + \abs{\nabla^2\phi} \left(\abs{\nabla\psi}+\abs{\nabla\phi}\right) \Big] dx dt \\
& + C \int_0^T \int_\Omega \Big[  \abs{\nabla^2\psi} \left(  \abs{\nabla\phi} \abs{\nabla^2\phi} + \abs{\nabla\phi} \abs{\nabla\psi} \right) \\
& \qquad\qquad + \abs{\nabla^2\phi} \left( \abs{\nabla\psi}^2 + \abs{\nabla\phi}^2 \right) \Big] dxdt.
\end{align*}
Besides $ \abs{\nabla\phi}\abs{\nabla^2\psi}\abs{\nabla^2\phi} $ which has been estimated in \cref{est-high}, the other triple terms in $ I_{14} $ can be estimated as follows.
\begin{align*}
& \int_0^T \int_\Omega \abs{\nabla\psi} \left(\abs{\nabla^2\psi}\abs{\nabla\phi} + \abs{\nabla^2\phi} \abs{\nabla\psi}\right)  dx dt + \int_0^T \int_\Omega \abs{\nabla\phi}^2 \abs{\nabla^2\phi} dx dt \\
& \quad \leq C \int_0^T  \norm{\nabla\psi}_{L^\infty}\left( \norm{\nabla\phi}\norm{\nabla^2\psi} + \norm{\nabla\psi}\norm{\nabla^2\phi} \right) dt + C \int_0^T \norm{\nabla\phi}_{L^4(\Omega)}^2 \norm{\nabla^2 \phi} dt \\
& \quad \leq C N(T) \int_0^T \left(\norm{\nabla\psi}_1 + \norm{\nabla^3\psi}\right)\left( \norm{\nabla^2\psi} + \norm{\nabla^2\phi} \right) dt \\
& \qquad + C \int_0^T \sum\limits_{k=1}^3 \norm{\nabla^2\phi}^{1+k/2} \norm{\nabla\phi}^{2-k/2} dt \\
& \quad \leq C \nu_0 \int_0^T \left(\norm{\nabla\psi}_1^2 + \norm{\nabla^2 \phi}^2 \right) dt + C \nu_0 \int_0^T \norm{\nabla^3 \psi}^2 dt \\
& \qquad + C N(T) \int_0^T \sum\limits_{k=1}^3 \norm{\nabla^2\phi}^{k/2} \norm{\nabla\phi}^{2-k/2} dt, \\
& \quad \leq C \nu_0 \int_0^T \norm{\nabla(\psi, \phi)}_1^2 dt + C \nu_0 \int_0^T \norm{\nabla^3 \psi}^2 dt.
\end{align*}
Thus, it holds that
\begin{align*}
\int_0^T \norm{I_{14}}_{L^1(\Omega)} dt \leq~& \left(\frac{1}{8}+C\nu_0 \right) \int_0^T \int_\Omega \frac{p'(\rho)}{\rho} \abs{\nabla^2\phi}^2 dxdt + C \int_0^T \left( \norm{\nabla\psi}_1^2 + \norm{\nabla\phi}^2\right) dt \\
& + C \nu_0 \int_0^T \norm{\nabla^3 \psi}^2 dt.
\end{align*}
Thus, collecting the estimates of $ I_{10} $ to $ I_{14} $ above, by adding the two equations $ (2\mu+\lambda)\cdot $\cref{eq-11} and \cref{eq-14} together, and summing the results with respect to $ i $ from $ 1 $ to $ 3, $ one has that
\begin{align*}
\sup_{t\in(0,T)} \norm{\nabla^2\phi}^2 + \int_0^T \norm{\nabla^2 \phi}^2 dt \leq~& C \norm{\phi_0}_2^2 + C \norm{\psi_0}_1^2 + C \norm{\nabla\psi}^2 + C (\e+\delta^{\frac{1}{2}}) \\
& + C \int_0^T \left(\norm{\nabla\psi}_1^2 + \norm{\nabla\phi}^2 \right) dt + C \nu_0 \int_0^T \norm{\nabla^3 \psi}^2 dt.
\end{align*}
Thus \cref{eq-est-3} follows from Lemmas \ref{Lem-est-0}--\ref{Lem-est-2}.
\end{proof}

\begin{Lem}\label{Lem-est-4}
Under the assumptions of \cref{Thm-apriori}, if $ \delta_0>0, \e_0>0 $ and $ \nu_0>0 $ are small enough, then
\begin{equation}\label{eq-est-4}
\sup_{t\in(0,T)} \norm{\nabla^2 \psi}^2 + \int_0^T \norm{\nabla^3 \psi}^2 dt \leq C \norm{\phi_0, \psi_0}_2^2 + C(\e+\delta^{\frac{1}{4}}).
\end{equation}	
\end{Lem}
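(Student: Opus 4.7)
The strategy is to lift the argument of Lemma \ref{Lem-est-2} by one derivative. For each fixed $i\in\{1,2,3\}$, I would apply $\partial_i$ to the momentum equation \cref{eq-pertur-2}, multiply the resulting identity by $-\frac{\Delta \partial_i\psi}{\rho}$, and sum over $i$. Mimicking the computations that produced \cref{eq-10} (with the same divergence trick \cref{hot} applied at order one higher), this yields an energy identity of the form
\begin{equation*}
\frac{1}{2}\partial_t\abs{\nabla\partial_i\psi}^2 + \frac{\mu}{\rho}\abs{\Delta\partial_i\psi}^2 + \frac{\mu+\lambda}{\rho}\abs{\nabla\dv\partial_i\psi}^2 = \dv Q + \text{(commutator and coupling terms)}.
\end{equation*}
Summing over $i$ and integrating on $\Omega\times(0,T)$, the left-hand side controls $\sup_t\norm{\nabla^2\psi}^2$ and $\int_0^T\norm{\nabla^3\psi}^2\,dt$, since $\norm{\Delta\nabla\psi}^2\geq a_0\norm{\nabla^3\psi}^2$ as in the proof of Lemma \ref{Lem-est-2}.

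The right-hand side decomposes into several families of terms that I would treat in parallel with the proof of Lemma \ref{Lem-est-3}. The coupling term $\partial_i\bigl(\tfrac{p'(\rho)}{\rho}\nabla\phi\bigr)\cdot\Delta\partial_i\psi$ is the top-order interaction with $\phi$; its leading piece is bounded by $C\norm{\nabla^2\phi}\norm{\nabla^3\psi}$, which after Cauchy-Schwarz gives $\tfrac18\int_0^T\norm{\nabla^3\psi}^2\,dt+C\int_0^T\norm{\nabla^2\phi}^2\,dt$, and the latter is already controlled by Lemma \ref{Lem-est-3}. The inhomogeneous terms containing $\f$, $\g$, $h_0$ decay exponentially by Lemmas \ref{Lem-per} and \ref{Lem-h}, contributing $O(\e)$. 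The rarefaction-wave forcing $(2\mu+\lambda)\partial_1^3\ut_1^r\,\partial_i\Delta\psi_1$ is handled exactly as in \cref{eq-4}: decompose $\nabla\psi=\sum_k(\nabla\psi)^{(k)}$ via Lemma \ref{Lem-GN}, use the $L^p$ bounds of Lemma \ref{Lem-rare} for $\nabla^3\ut_1^r$, and absorb into $\tfrac18\int_0^T\norm{\nabla^3\psi}^2\,dt$ with a remainder $O(\delta^{1/4})$.

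The main obstacle is the cubic nonlinear family coming from $\partial_i(\rho\uv\cdot\nabla\psi)$, $\partial_i(\rho\psi\cdot\nabla\uvt)$ and the $\partial_i$ of the viscous coefficient $\tfrac{1}{\rho}$: products of the schematic forms $\abs{\nabla\psi}\abs{\nabla^2\psi}\abs{\Delta\nabla\psi}$ and $\abs{\nabla\phi}\abs{\nabla^2\psi}\abs{\Delta\nabla\psi}$. One must avoid placing two derivatives of $\psi$ in $L^\infty$, since only $H^2(\Omega)$-control of $\psi$ is available. The way through is to apply Lemma \ref{Lem-GN} as in \cref{L4}: estimate $\norm{\nabla\psi}_{L^\infty(\Omega)}\leq C(\norm{\nabla\psi}_1+\norm{\nabla^3\psi})$ and $\norm{\nabla^2\psi}_{L^4(\Omega)}\leq C\sum_{k=1}^3\norm{\nabla^3\psi}^{k/4}\norm{\nabla^2\psi}^{1-k/4}$, then pair with $\norm{\nabla\phi}_{L^4(\Omega)}\leq CN(T)$ and $\norm{\Delta\nabla\psi}\in L^2_t$. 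Young's inequality with the small parameter $\nu_0$ (provided by the a priori bound $N(T)<\nu_0$) absorbs these into the viscous term on the left, leaving residuals that are controlled by $\int_0^T\norm{\nabla(\phi,\psi)}_1^2\,dt$, which is handled by Lemmas \ref{Lem-est-0}-\ref{Lem-est-3}. Choosing $\delta_0,\e_0,\nu_0$ sufficiently small then closes the estimate and delivers \cref{eq-est-4}.
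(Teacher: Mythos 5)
Your proposal is correct and follows essentially the same route as the paper: differentiate the momentum equation once (the paper first divides by $\rho$ and then applies $\p_i$, which avoids the extra commutator with $\p_t\psi$ that your ordering creates, but that term is harmless after substituting the equation), test with $-\p_i\lap\psi$, get coercivity of $\norm{\nabla^3\psi}^2$ from the viscous terms, and control the cubic and coupling terms via the $L^4$/$L^\infty$ interpolation of \cref{Lem-GN} together with $\nu_0$-absorption and Lemmas \ref{Lem-est-0}--\ref{Lem-est-3}. One cosmetic remark: the rarefaction forcing here pairs directly with $\nabla^3\psi\in L^2_t$, so plain Cauchy--Schwarz with $\norm{\p_1^3\ut_1^r}_{L^2}\leq C\min\{\delta,(1+t)^{-1}\}$ suffices and the decomposition trick of \cref{eq-4} is not needed.
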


\begin{proof}
For fixed $ i \in \{1,2,3\}, $ taking the derivative $ \p_i  $ on $ \frac{1}{\rho}\cdot $\cref{eq-pertur-2} and then multiplying the result by $ \cdot(- \p_i  \lap \psi), $ one has that
\begin{align}
& -\p_t\p_i \psi \cdot \p_i \lap\psi + \p_i \left( \frac{\mu}{\rho} \lap \psi \right)  \cdot\p_i \lap\psi + \p_i \left( \frac{\mu+\lambda}{\rho} \nabla\dv \psi \right) \cdot\p_i \lap\psi \notag \\
& \quad = \p_i \lap\psi \cdot \p_i  \Big[\uv\dnab\psi 
+ \psi\dnab\uvt + \frac{p'(\rho)}{\rho} \nabla\phi + \left( \frac{p'(\rho)}{\rho} - \frac{p'(\rhot)}{\rhot} \right) \nabla\rhot \notag \\
& \qquad\qquad\qquad\quad -\frac{\f-\phi\g}{\rho} - \frac{2\mu+\lambda}{\rho} \p_1 ^2\ut_1^r \E_1 \Big] := I_{15}. \label{eq-16}
\end{align}
For the left-hand side of \cref{eq-16}, similar to \cref{hot}, one can get that
\begin{align*}
	-\p_t\p_i \psi \cdot \p_i \lap\psi & =
	\frac{1}{2} \p_t \left( \abs{\nabla\p_i  \psi}^2 \right) - \dv\left( \nabla\p_i  \psi \p_t \p_i  \psi \right), \\
	\frac{1}{\rho} \p_i  \nabla\dv \psi \cdot\p_i \lap\psi & = \frac{1}{\rho} \abs{\nabla\dv \p_i \psi}^2 + \dv \left( \frac{\dv\p_i \psi}{\rho} \lap\p_i \psi - \frac{\nabla \left(\dv\p_i \psi\right)^2}{2\rho} \right) \\
	&\quad + \frac{\dv\p_i \psi}{\rho} \nabla\rho \cdot \left( \lap\p_i  \psi - \nabla\p_i  \dv\psi \right).
\end{align*} 
Then one has that 
\begin{equation}\label{eq-15}
	\frac{1}{2} \p_t \left( \abs{\nabla\p_i  \psi}^2 \right) + \frac{\mu}{\rho} \abs{\p_i \lap\psi}^2 + \frac{1}{\rho} \abs{\nabla\dv \p_i \psi}^2 = \dv Q_6 + I_{15} + I_{16},
\end{equation}
where $ Q_6 = \nabla\p_i  \psi \p_t \p_i  \psi - \frac{\dv\p_i \psi}{\rho} \lap\p_i \psi + \frac{\nabla \left(\dv\p_i \psi\right)^2}{2\rho}, $
\begin{equation}\label{eq-I-16}
I_{16} = \frac{\p_i \rho}{\rho^2} \left[ \mu \lap\psi + (\mu+\lambda) \nabla\dv\psi \right] \cdot \p_i \lap\psi - \frac{\dv\p_i \psi}{\rho} \nabla\rho \cdot \left( \lap\p_i  \psi - \nabla\p_i  \dv\psi \right),
\end{equation}
and there holds that $ \int_\Omega \frac{\mu}{\rho} \abs{\p_i  \lap \psi}^2 dx \geq a_0 \norm{\nabla^2 \p_i  \psi}^2 $ for some constant $ a_0>0. $

From \cref{eq-16}, it holds that 
\begin{align}
\abs{I_{15}} & \leq C \abs{\p_i \lap\psi} \Big[ \abs{\nabla^2\psi} + \abs{\nabla\uvt}\abs{\nabla\psi} + \abs{\nabla\psi}^2 + \abs{\nabla^2\uvt}\abs{\psi} + \abs{\nabla^2\phi} + \abs{\nabla\rhot}\abs{\nabla \phi} + \abs{\nabla\phi}^2 \notag \\
& \qquad\qquad\quad + \left(\abs{\nabla \rhot}^2 + \abs{\nabla^2\rhot} \right)\abs{\phi} + \abs{\nabla\rhot} \abs{\nabla\phi} + \abs{\nabla\f} + \abs{\f} (\abs{\nabla\rhot}+\abs{\nabla\phi}) + \abs{\nabla\g} \abs{\phi}
\notag \\
& \qquad\qquad\quad + \abs{\g} \abs{\nabla\phi} + \abs{\g} \abs{\phi} (\abs{\nabla\rhot} + \abs{\nabla\phi}) +\abs{\p_1^2\p_i\ut_1^r} + \abs{\p_1 ^2\ut_1^r} (\abs{\nabla\rhot}+\abs{\nabla\phi}) \Big]. \notag
\end{align}
Similar to the estimates before, e.g.
\begin{align*}
& \int_0^T \int_\Omega \abs{\p_i \lap\psi} \left(\abs{\nabla \rhot}^2 + \abs{\nabla^2\rhot} \right)\abs{\phi} dx dt \\
&\quad \leq C N(T) \int_0^T \left( \min\{\delta^2, \delta^{\frac{1}{2}}(1+t)^{-\frac{3}{2}}\} +\min\{\delta,(1+t)^{-1}\} + C\e e^{-\alpha t} \right) \norm{\nabla^3\psi} dt \\
&\quad \leq (\delta + \e) + C \nu_0 \int_0^T \norm{\nabla^3 \psi}^2 dt,
\end{align*}
one can get that
\begin{align}
\int_0^T \norm{I_{15}}_{L^1(\Omega)} dt &  \leq C \int_0^T \norm{\nabla^3 \psi} \norm{\nabla(\psi,\phi)}_1 dxdt +  C \int_0^T \norm{\nabla^3 \psi} \norm{\nabla(\psi,\phi)}_{L^4(\Omega)}^2 dt \notag \\
& \quad + \int_0^T \norm{\nabla^3\psi} \left( \norm{\f}_1 + \norm{\p_1^2 \ut_1^r}_1 + \norm{\g}_{W^{1,\infty}} \norm{\phi} \right) dt \notag \\
& \quad + C (\delta + \e) + C \nu_0 \int_0^T \norm{\nabla^3 \psi}^2 dt \notag \\
& \leq \left(\frac{a_0}{4}+C (\e+\delta+\nu_0) \right) \int_0^T \norm{\nabla^3 \psi}^2 dt + C \int_0^T \norm{\nabla(\psi,\phi)}_1^2 dt \notag \\
& \quad  + C \e \sup_{t\in(0,T)} \norm{\phi}^2 + C (\delta + \e), \label{I-15}
\end{align}
where the $ L^4 $- estimate \cref{L4} and the Holder inequality for $ 1= \frac{1}{2}+\frac{k}{8} + \frac{4-k}{8} $ are used when dealing with the integral involving $ \norm{\psi,\phi}_{L^4(\Omega)}. $

Similarly, $ I_{16} $ satisfies that
\begin{align}
\int_0^T \norm{I_{16}}_{L^1(\Omega)} dt & \leq C \int_0^T \norm{\nabla^2\psi} \norm{\nabla^3\psi} dt + C \int_0^T  \norm{\nabla\phi}_{L^\infty} \norm{\nabla^2\psi} \norm{\nabla^3\psi} dt \notag \\
& \leq \left(\frac{a_0}{4}+C \nu_0 \right) \int_0^T \norm{\nabla^3 \psi}^2 dt + C \int_0^T \norm{\nabla^2 \psi}^2 dt. \label{I-16}
\end{align}
Then integrating \cref{eq-15} over $ \Omega \times(0,T) $ and summing the results with respect to $ i $ from $ 1 $ to $ 3 $ can finish the proof of \cref{Lem-est-4}.

\end{proof}

Thus, \cref{Thm-apriori} can follow from Lemmas \ref{Lem-est-0}--\ref{Lem-est-4} easily.

\vspace{0.3cm}

%
%
%
%
%
\begin{flushleft}
	\bfseries Proof of the main result, \cref{Thm}.
\end{flushleft}

\begin{proof}
	
	We first introduce the local existence theorem.
	\begin{Prop}[\cite{LWW}Local existence theorem]\label{Prop-local}
		There exists a positive constant $ b $, such that if $ \norm{\phi_0, \psi_0}_2\leq M $ and $ \inf\limits_{x}\left(\rhot(x,0)+\phi_0(x)\right) \geq m>0, $ then there exists a $T_0=T_0(m,M)$, such that, the problem \cref{eq-pertur-1}, \cref{eq-pertur-2} admits a unique solution $ (\phi, \psi)\in X_{\frac{1}{2}m, b M}(0,T_0) $ 
		where 
		\begin{align*}
		X_{m, M}(0,T) := \big\{ & (\phi,\psi): (\phi,\psi) \in C\left(0,T;H^2(\Omega)\right) \text{ with } ~ \inf_{x,t} \phi\geq m,~ \sup_{t} \norm{\phi,\psi}_2 \leq M, \\ 
		& \qquad \qquad \nabla\phi \in L^2\left(0,T; H^1(\Omega)\right) \text{ and }~ \nabla\psi\in L^2\left(0,T; H^2(\Omega)\right) \big\}.	
		\end{align*}
	\end{Prop}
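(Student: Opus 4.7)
The plan is to construct the solution through a Picard-type iteration that decouples the hyperbolic continuity equation from the parabolic momentum equation. Initializing with $(\phi^0,\psi^0) \equiv (\phi_0,\psi_0)$, and given $(\phi^n,\psi^n)$ with $\rho^n := \rhot + \phi^n$, $\uv^n := \uvt + \psi^n$, I would first solve the linearized momentum equation
\[
\rho^n \p_t \psi^{n+1} + \rho^n \uv^n \cdot \nabla \psi^{n+1} - \mu \lap \psi^{n+1} - (\mu+\lambda) \nabla \dv \psi^{n+1} = F^n,
\]
where $F^n$ gathers all lower-order inhomogeneous terms evaluated at step $n$. This is a linear strongly parabolic system on $\Omega = \R \times \Torus^2$, solvable in $L^2(0,T_0;H^3)\cap H^1(0,T_0;H^1)$ by Galerkin approximation with a basis adapted to the $\Torus^2$-periodicity, or by a standard Friedrichs-mollifier construction. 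Once $\psi^{n+1}$ is available, solve the linearized continuity equation
\[
\p_t \phi^{n+1} + \uv^n \cdot \nabla \phi^{n+1} = -\rho^n \dv \psi^{n+1} - \phi^{n+1} \dv \uvt - \nabla\rhot \cdot \psi^{n+1} - h_0
\]
by the method of characteristics, producing $\phi^{n+1}\in C(0,T_0;H^2)$.

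The second step is a uniform-in-$n$ $H^2$ bound. Running the test-function calculations of Lemmas~\ref{Lem-est-0}--\ref{Lem-est-4} on this linear system yields
\[
\sup_{t\in(0,T_0)} \norm{(\phi^{n+1},\psi^{n+1})}_2^2 + \int_0^{T_0} \bigl(\norm{\nabla\phi^{n+1}}_1^2 + \norm{\nabla\psi^{n+1}}_2^2\bigr)\,dt \leq C(m)\,\norm{\phi_0,\psi_0}_2^2 + C(m)\,T_0\,Q\bigl(\sup_t\norm{(\phi^n,\psi^n)}_2\bigr),
\]
with $Q$ a polynomial. Choosing $T_0=T_0(m,M)$ small enough, one inductively obtains $\sup_t\norm{(\phi^n,\psi^n)}_2 \leq bM$. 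The lower bound $\rho^{n+1} \geq m/2$ is preserved via $\abs{\phi^{n+1}(x,t)-\phi_0(x)} \leq C T_0^{1/2}\,\norm{\p_t\phi^{n+1}}_{L^2_t L^\infty_x}$ combined with the embedding $H^2(\Omega) \hookrightarrow L^\infty(\Omega)$.

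Contraction is then established in a lower-regularity norm. Setting $\Phi^n := \phi^{n+1}-\phi^n$ and $\Psi^n := \psi^{n+1}-\psi^n$, the differences solve a linear system whose right-hand side is controlled by $(\Phi^{n-1},\Psi^{n-1})$ times coefficients whose $H^2$ norms are uniformly bounded by the previous step. A plain $L^2$-energy estimate (not attempting to control $H^2$ norms of the differences) produces
\[
\sup_{(0,T_0)}\norm{(\Phi^n,\Psi^n)}^2 + \int_0^{T_0}\norm{\nabla\Psi^n}^2\,dt \leq \tfrac12\Bigl(\sup_{(0,T_0)}\norm{(\Phi^{n-1},\Psi^{n-1})}^2 + \int_0^{T_0}\norm{\nabla\Psi^{n-1}}^2\,dt\Bigr)
\]
for $T_0$ small, so the sequence is Cauchy in $C(0,T_0;L^2)\cap L^2(0,T_0;H^1)$. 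Interpolating its strong $L^2$ limit against the uniform $H^2$ bound delivers a solution $(\phi,\psi) \in X_{m/2,bM}(0,T_0)$; uniqueness follows from the same contraction applied to any pair of solutions with matching data.

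The main obstacle is the $H^2$ estimate for $\phi^{n+1}$: the continuity equation is transport-type and provides no smoothing, so one must bound $\nabla^2(\uv^n \dnab \phi^{n+1})$ without using $\nabla \uv^n \in L^\infty$, which is not implied by $\uv^n \in H^2(\Omega)$ in three dimensions. The remedy is to invoke the Gagliardo--Nirenberg type inequality of \cref{Lem-GN} on $\R \times \Torus^2$ together with the parabolic regularity $\nabla^3 \psi^n \in L^2_t L^2_x$ inherited from the previous iterate, which bounds $\nabla \uv^n$ in $L^2_t L^\infty_x$; the resulting top-order transport contributions are absorbed via a Moser-type commutator estimate. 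Once this step is secured, the remaining estimates mirror those in Section~3, and assembling them yields the claimed local well-posedness.
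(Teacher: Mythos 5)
The paper does not actually prove \cref{Prop-local}: it is stated with a citation to \cite{LWW} and used as a black box, so there is no in-paper argument to compare against. Your proposal supplies the standard proof that such references use --- decouple the system into a linear parabolic momentum equation and a linear transport equation, iterate, obtain uniform $H^2$ bounds on a short time interval $T_0(m,M)$, and close with a contraction in the weaker norm $C(0,T_0;L^2)\cap L^2(0,T_0;H^1)$ --- and as a sketch it is essentially sound. Two remarks. First, your observation that the transport step needs $\nabla \uv^n$ in $L^1_t L^\infty_x$, obtained from $\nabla\psi^n\in L^2(0,T_0;H^2(\Omega))$ and the embedding/G-N inequality of \cref{Lem-GN} on $\R\times\Torus^2$ rather than from $H^2\hookrightarrow W^{1,\infty}$ (which fails), is exactly the right point to flag; note also that the requirement $\nabla\phi\in L^2(0,T_0;H^1)$ in the space $X$ is automatic from $\phi\in C(0,T_0;H^2)$ on a finite interval, so you need not extract any dissipation in $\phi$ from the decoupled scheme. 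Second, your argument for preserving the density lower bound is not quite right as written: the estimate $\abs{\phi^{n+1}(x,t)-\phi_0(x)}\leq C T_0^{1/2}\norm{\p_t\phi^{n+1}}_{L^2_tL^\infty_x}$ requires $\uv^n\cdot\nabla\phi^{n+1}\in L^2_t L^\infty_x$, but $\nabla\phi^{n+1}$ is only in $H^1(\Omega)$, which does not embed into $L^\infty(\Omega)$ on this three-dimensional domain, and the transport equation gives no gain of regularity to repair this. The fix is immediate with the tool you already invoked: integrate along the characteristics of $\p_t+\uv^n\cdot\nabla$, where the convection term disappears and the remaining sources $\rho^n\dv\psi^{n+1}$, $\phi^{n+1}\dv\uvt$, $\nabla\rhot\cdot\psi^{n+1}$, $h_0$ lie in $L^1(0,T_0;L^\infty(\Omega))$ (using $\dv\psi^{n+1}\in L^2_t H^2\hookrightarrow L^2_t L^\infty$), which yields $\abs{\phi^{n+1}-\phi_0}\leq C(m,M)T_0^{1/2}$ along trajectories and hence $\rho^{n+1}\geq m/2$ for $T_0$ small. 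With that correction your outline gives a complete and self-contained proof of the proposition the paper only cites.
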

	Under the assumptions of \cref{Thm}, we let $ m = \frac{1}{2} \rhob^- $ and $ M = C C_0 \e_0 + C (\e_0 + \delta_0^{\frac{1}{4}}), $ where $ C_0 $ is the constant in \cref{psi0} and $ C, \e_0,\delta_0 $ are the constants in \cref{Thm-apriori}. And we can let $ \e_0 $ and $ \delta_0 $ small enough such that
	\begin{equation}
	\inf_{x,t} \rhot(x,t) \geq \rhob^- - C\e_0 \geq \frac{1}{2} \rhob^- + M.
	\end{equation}
	
	Then by \cref{Prop-local}, the solution $ (\phi, \psi) $ to \cref{eq-pertur-1,eq-pertur-2} exists on $[0,T_0]$, satisfying $ (\phi, \psi)\in X_{\frac{1}{4}\rhob^-, bM}(0,T_0), $ i.e.
	$$\sup\limits_{0\leq t\leq T_0}\|(\phi, \psi)(t)\|_2\leq b M.$$
	Since either $ b $ or $ M $ is independent of the constant $ \nu_0>0 $ in \cref{Thm-apriori}, one can choose $ \e_0 $ and $ \delta_0 $ small enough such that $ b M < \nu_0. $
	Thus, it follows from the priori estimates, \cref{Thm-apriori}, that
	\begin{align*}
	\sup\limits_{0\leq t\leq T_0}\|(\phi, \psi)(t)\|_2\leq C \norm{\phi_0,\psi_0}_2 + C(\e+\delta^{\frac{1}{4}}) \leq M,
	\end{align*}
	and hence,
	\begin{align*}
	\inf_{\substack{x\in\Omega\\t\in(0,T_0)}} \left(\rhot(x,t) + \phi(x,t)\right) \geq \frac{1}{2} \rhob^- + M - M = \frac{1}{2} \rhob^-.
	\end{align*}
 	Using \cref{Prop-local} once more, the solution to \cref{eq-pertur-1,eq-pertur-2} also exists on$[T_0,2T_0]. $ By induction, we obtain the global in time solution to the Cauchy problem of \cref{eq-pertur-1,eq-pertur-2}, if $ \e_0>0 $ and $ \delta_0 $ are small enough. 
 	
 	Hence, to complete the proof of \cref{Thm}, it remains to prove the large time behavior of the solution.
 	
 	We give only the proof of $ \phi, $ since it is similar to prove $ \psi. $ It follows from \cref{Lem-GN} that
	\begin{align*}
	\norm{\phi}_{L^\infty(\R^3)} = \norm{\phi}_{L^\infty(\Omega)} & \leq C \norm{\nabla \phi}^{\frac{1}{2}} \norm{\phi}^{\frac{1}{2}} + \norm{\nabla\phi} + \norm{\nabla^2\phi}^{\frac{1}{2}} \norm{\nabla\phi}^{\frac{1}{2}}, \\
	& \leq C \delta^{\frac{1}{2}} \norm{\nabla\phi}^{\frac{1}{2}}.
	\end{align*}
	Thus if suffices to prove that $
	\norm{\nabla\phi}(t) \rightarrow 0 \quad \text{as } t\rightarrow +\infty, $ which can be easily derived from
	\begin{align*}
	\int_0^{+\infty} \left( \norm{\nabla \phi}^2 + \abs{\frac{d}{dt} \norm{\nabla \phi}^2 } \right) dt <+\infty.
	\end{align*}
	The proof of \cref{Thm} is completed.
	
\end{proof}

\vspace{0.3cm}

\section{Appendix}\label{Sec-appen}

\begin{proof}[Proof of \cref{Lem-per}]
	In this proof, we let $ \norm{\cdot} = \norm{\cdot}_{L^2(\Torus^3)} $ and $ \norm{\cdot}_l = \norm{\cdot}_{H^l(\Torus^3)} $ for $ l\geq 1. $
	Assume that the initial data to \cref{NS} is 
	$$ \left(\rho, \rho \uv\right)(x,0) = \left(\rhob, \rhob~ \uvb\right) + \left(v_0, \wv_0\right), \quad x\in\Torus^3, $$ 
	where $ \rhob $ and $ \uvb $ are constants with $ \rhob>0, $ and $ v_0 $ and $ \wv_0 $ are periodic functions on zero averages, i.e.
	\begin{align*}
	\int_{\Torus^3} \left(v_0, \wv_0\right) dx =0.
	\end{align*}
	By the Galilean transformation, one can assume that $ \uvb = 0 $ without loss of generality.
	If the initial data is small enough, i.e. $ \e = \norm{v_0,\wv_0}_5 << 1, $ then the global existence and uniqueness of the periodic solution $ \left(\rho,\uv\right) $ to \cref{NS} is standard; see \cite{MN1980}. And it holds that 
	\begin{align*}
	\sup_t \norm{(\rho-\rhob, \uv)}_5 \leq C\e.
	\end{align*}
	Now we prove the exponential decay rate of the solution by using the Poincar\'{e} inequality.
	Denote the perturbations
	\begin{align*}
	v := \rho-\rhob, \quad \zv := \uv-\uvb = \uv \quad \text{ and } \quad \wv := \rho \uv - \rhob ~\uvb = \rho \zv.
	\end{align*}
	Due to the conservative form of \cref{NS}, it holds that
	\begin{align*}
	\int_{\Torus^3} v(x,t) = 0, \quad \int_{\Torus^3} \wv(x,t) dx = 0, \quad t\geq 0.
	\end{align*}
	Thus, it follows from the Poincar\'{e} inequality that
	\begin{equation}\label{poinc-1}
	\begin{aligned}
	\norm{v} & \leq C_0 \norm{\nabla v}, \\
	\norm{\wv} & \leq C_0 \norm{\nabla \wv} \leq C_0 \norm{\zv}_{L^\infty} \norm{\nabla v} + C_1 \norm{\nabla \zv} \leq C_0\e \norm{\nabla v} + C_1 \norm{\nabla \zv},
	\end{aligned}
	\end{equation}
	where the positive constants $ C_0 $ and $ C_1 $ are independent of $ \e $ or $ t. $ For the derivatives, it also follows form the Poincar\'{e} inequality that
	\begin{equation}\label{poinc-2}
	\norm{\nabla v} \leq C_0 \norm{\nabla^2 v}, \quad \norm{\nabla \zv} \leq C_0\norm{\nabla^2 \zv} \quad \text{and } \norm{\nabla^2 \zv}\leq C_0 \norm{\nabla^3 \zv}.
	\end{equation}
	It is noted that the perturbations $ v $ and $ \zv $ here satisfy the same equations as \cref{eq-pertur-1,eq-pertur-2}, where $ \rhot$ and $\ut $ become constants and $ h_0=\f=\g =0. $ Thus, similar estimates as Lemmas \ref{Lem-est-0}--\ref{Lem-est-4} yield that
	\begin{equation}\label{app-ineq}
	E'(t) + C_2 \norm{\nabla v}_1^2 + C_2 \norm{\nabla \zv}_2^2 \leq 0,
	\end{equation}
	for some energy functional $ E(t) $ satisfying
	\begin{equation*}
	C^{-1} \norm{v,\zv}_2^2 \leq E(t) \leq C \norm{v,\zv}_2^2,
	\end{equation*}
	for some constant $ C>0. $
	Thus, it follows from \cref{poinc-1,poinc-2,app-ineq} that
	\begin{align*}
	E'(t) + \frac{C_2}{2} \norm{\nabla v}_1^2 + \frac{C_2C_0}{2} \norm{v}_1^2 + \frac{C_2}{2} \norm{\nabla \zv}_2^2 + C_3 \norm{\zv}_2^2 - C_4 \delta^2 \norm{\nabla v}^2 \leq 0.
	\end{align*}
	Thus, if $ \e>0 $ is small enough, one can get that
	\begin{equation*}
	E'(t) + 2 C_5 E(t) \leq 0,
	\end{equation*}
	which implies that
	\begin{equation*}
	\norm{v,\zv}_2(t) \leq Ce^{-C_5 t}.
	\end{equation*}
	And the higher order estimates 
	\begin{equation*}
	\norm{v,\zv}_5(t) \leq Ce^{-C_6 t}.
	\end{equation*}
	can be proved similarly, which is omitted for brevity.
	
\end{proof}

\vspace{0.3cm}

\begin{proof}[Proof of \cref{Lem-h}]
	The idea is to extract the ``well-decay terms'' $ R_i~ (i=1,2,\cdots) $ from the equations \cref{source}, where all the $ R_i $ are products of space-periodic functions decaying exponentially fast with respect to $ t $ (e.g. $ v^\pm, \zv^\pm, \rhot-\rhot^r, \p_t \uv^\pm, \nabla\rho^\pm, \cdots $) and integrable functions with respect to $ x_1 \in \R $ (e.g. $ \eta(1-\sigma), \sigma-\eta, \p_t \sigma, \p_1  \eta, \cdots $). 
	
	\vspace{0.2cm}
	
	Denote $ \e = \norm{v_0,\zv_0}_{H^5(\Torus^3)}. $ Then it follows from \cref{Lem-per} that $$ \norm{v^\pm,\zv^\pm}_{W^{3,+\infty}(\Torus^3)} \leq C\e e^{-2\alpha t}. $$

	i) For $ h_0 $ given in \cref{source}, first note that
	\begin{align}
	\p_t\rhot = ~ & \p_t \rho^- (1-\sigma) + \p_t \rho^+ \sigma + \left(\rho^+-\rho^-\right) \p_t \sigma, \notag \\
	= & \p_t \rho^- (1-\sigma) + \p_t \rho^+ \sigma + \p_t \rhot^r + R_1, \label{h0-1}
	\end{align}
	where the remainder $ R_1 = \left( v^+ -v^- \right) \p_t \sigma. $  It follows from Lemmas \ref{Lem-sig-eta} and \ref{Lem-per} that
	\begin{align*}
	\norm{R_1(\cdot,t)}_{W^{2,p}(\Omega)} & \leq C \e e^{-2 \alpha t} \norm{\p_t \sigma(\cdot,t)}_{W^{2,p}(\R)} \leq C \e e^{-2 \alpha t}.
	\end{align*}
	And
	\begin{align}
	\dv \left( \rhot \uvt  \right) = ~&  \left[\rho^-(1-\sigma) + \rho^+ \sigma\right] \left[ \dv \uv^- (1-\eta) + \dv \uv^+ \eta  \right] + \rhot \left[\p_1 \ut_1^r + \left(z_1^+ - z_1^-\right) \p_1  \eta\right] \notag \\
	& + \left[\uv^-(1-\eta) +\uv^+ \eta \right] \cdot \left[ \nabla \rho^- (1-\sigma) + \nabla \rho^+ \sigma  \right] + \ut_1 \left[\p_1 \rhot^r + \left(v^+ -v^- \right) \p_1  \sigma\right] \notag \\
	=~ & \dv \left(\rho^- \uv^-\right) (1-\sigma) (1-\eta) + \dv \left(\rho^+ \uv^+ \right) \sigma\eta + \p_1  \left(\rhot \ut_1^r\right) + R_2 \notag \\
	=~ & \dv \left(\rho^- \uv^-\right) (1-\sigma) + \dv \left(\rho^+ \uv^+ \right) \sigma + \p_1  \left(\rhot^r \ut_1^r \right) + R_2 + R_3, 
	\label{h0-2}
	\end{align}
	where 
	\begin{align*}
	R_2 =~ & \dv \left( \rho^- \uv^+\right) (1-\sigma)\eta + \dv \left( \rho^+ \uv^-  \right) \sigma (1-\eta)  + (\rhot-\rhot^r) \p_1  \ut_1^r + (\ut_1-\ut_1^r) \p_1 \rhot^r \\
	& + \rhot \left(z_1^+ - z_1^-\right) \p_1  \eta + \ut_1 \left(v^+ -v^-\right) \p_1  \sigma, \\
	R_3 =~ & - \dv \left(\rho^- \uv^-\right)(1-\sigma)\eta - \dv \left(\rho^+ \uv^+ \right) \sigma (1-\eta),
	\end{align*}
	which satisfy that
	\begin{align*}
	\sum_{i=2}^3 \norm{R_i}_{W^{2,p}(\Omega)} & \leq C \e e^{- \alpha t}.
	\end{align*}
	Collecting \cref{h0-1,h0-2}, one can get that
	\begin{align*}
	h_0 =~& \left[\p_t\rho^- + \dv(\rho^- \uv^-)\right] (1-\sigma) + \left[\p_t\rho^+ + \dv(\rho^+ \uv^+)\right] \sigma + \p_t \rhot^r + \p_1  \left(\rhot^r\ut^r_1\right)+ \sum\limits_{i=1}^3 R_i \\
	=~& \sum\limits_{i=1}^3 R_i,
	\end{align*}
	which satisfies
	\begin{equation*}
	\norm{h_0}_{W^{2,p}(\Omega)} \leq C \e e^{- \alpha t}.
	\end{equation*}
	
	\vspace{0.2cm}
	
	ii) Now we prove the source term $ \h $ deduced from the momentum equations \cref{source}. Note that
	\begin{align}
	\rhot \p_t \uvt =~ & \rhot \left[\p_t \uv^- (1-\eta) + \p_t \uv^+ \eta + \p_t \ut_1^r \E_1 + \left(\zv^+ - \zv^- \right)\p_t \eta \right] \notag \\
	=~ & \rho^- \p_t \uv^- (1-\sigma)(1-\eta) + \rho^+ \p_t \uv^+ \sigma\eta + \rhot \p_t \ut_1^r \E_1 + R_4 \notag \\
	=~ & \rho^- \p_t \uv^- (1-\sigma) + \rho^+ \p_t \uv^+ \sigma + \rhot^r \p_t \ut_1^r \E_1 + R_4 +R_5, \label{h1-1}
	\end{align}
	where
	\begin{align*}
	R_4 =~& \rho^- \p_t \uv^+ (1-\sigma) \eta + \rho^+ \p_t \uv^- \sigma(1-\eta) + \rhot \left( \zv^+ -\zv^-\right) \p_t \eta, \\
	R_5 =~& - \rho^- \p_t \uv^- (1-\sigma)\eta  - \rho^+ \p_t \uv^+ \sigma(1-\eta) + \left(\rhot-\rhot^r\right) \p_t \ut_1^r \E_1.
	\end{align*}
	And 	
	\begin{align}
	\rhot \uvt \dnab \uvt =~ & \rhot \uvt \cdot \left[\nabla \uv^- (1-\eta) + \nabla \uv^+ \eta \right] + \rhot \ut_1  \left[\p_1  \ut_1^r \E_1 + \p_1  \eta \left(\zv^+-\zv^- \right)\right] \notag \\
	=~ & \rho^- \uv^- \dnab \uv^- (1-\sigma)(1-\eta)^2 + \rho^+ \uv^+ \dnab \uv^+ \sigma \eta^2 + \rhot \ut_1 \p_1  \ut_1^r \E_1 + R_6 \notag \\
	=~ & \rho^- \uv^- \dnab  \uv^- (1-\sigma) + \rho^+ \uv^+ \dnab \uv^+ \sigma + \rhot^r \ut_1^r \p_1  \ut_1^r \E_1 + R_6 + R_7, \label{h1-2}
	\end{align}
	where
	\begin{align*}
	R_6 =~& \nabla \uv^- \cdot \left[ \rhot \uvt - \rho^-  \uv^- (1-\sigma)(1-\eta) \right] (1-\eta) + \nabla \uv^+ \cdot \left( \rhot \uvt - \rho^+ \uv^+ \sigma\eta \right) \eta \\
	& + \rhot\ut_1 \p_1 \eta(\zv^+ - \zv^-), \\
	R_7 =~& \rho^- \uv^- \dnab \uv^- (1-\sigma)\eta (\eta-2) + \rho^+ \uv^+\dnab \uv^+ \sigma (\eta^2-1) \\
	& + \left( \rhot\ut_1 - \rhot^r \ut_1^r \right) \p_1  \ut_1^r \E_1. 
	\end{align*}
	Moreover,
	\begin{align}
	\nabla p(\rhot) =~& p'(\rhot) \left[ \nabla \rho^- (1-\sigma) + \nabla \rho^+ \sigma + \p_1  \rhot^r \E_1 + \left(v^+ - v^-\right) \p_1  \sigma \E_1 \right] \notag \\
	=~& \left(p'(\rhot) - p'(\rho^-)\right) \nabla \rho^- (1-\sigma) + \nabla p(\rho^-) (1-\sigma) + \left(p'(\rhot) - p'(\rho^+)\right)\nabla \rho^+ \sigma \notag \\
	& + \nabla p(\rho^+) \sigma + \left(p'(\rhot) - p'(\rhot^r) \right) \p_1  \rhot^r \E_1 + \nabla p(\rhot^r) + p'(\rhot)  \left(v^+-v^-\right)\p_1  \sigma \E_1 \notag \\
	=~& \nabla p(\rho^-) (1-\sigma) + \nabla p(\rho^+) \sigma + \p_1  p(\rhot^r) \E_1 + R_8, \label{h1-3}
	\end{align}
	where the remainder $ R_8 $ satisfies that
	\begin{align*}
	R_8 =~& \left(\rho^+-\rho^-\right) \left[ a(\rho^-, \rhot) \nabla \rho^- - a(\rho^+,\rhot) \nabla \rho^+ \right] \sigma(1-\sigma) \\
	& + a(\rhot^r,\rhot) (\rhot-\rhot^r) \p_1  \rhot^r \E_1 + p'(\rhot) \left(v^+-v^-\right)\p_1  \sigma \E_1,
	\end{align*}
	where $ a(u,v):= \int_{0}^{1} p''(u+\theta(v-u))d\theta. $
	
	The remaining second-order terms satisfy that
	\begin{equation}\label{h1-4}
	\begin{aligned}
	\lap \uvt =~ & \lap \left( \uvt- \ut_1^r\E_1\right) + \p_1 ^2 \ut_1^r \E_1 \\
	=~ & \lap \left[ \zv^- (1-\sigma) + \zv^+ \sigma \right] + \lap \left[ (\zv^+ - \zv^-) (\eta-\sigma) \right] + \p_1 ^2 \ut_1^r \E_1 \\
	=~ & \lap \uv^- (1-\sigma) + \lap \uv^+ \sigma + \p_1 ^2 \ut_1^r \E_1 + R_9, \\
	\nabla \dv \uvt =~ & \nabla \dv \left[ \zv^- (1-\sigma) + \zv^+ \sigma \right] + \nabla \dv \left[ (\zv^+ - \zv^-) (\eta-\sigma) \right] + \p_1 ^2 \ut_1^r \E_1 \\
	=~ & \nabla \dv \uv^- (1-\sigma) + \nabla \dv \uv^+ \sigma + \p_1 ^2 \ut_1^r \E_1 + R_{10},
	\end{aligned}
	\end{equation}
	where 
	\begin{align*}
	R_9 =~& \lap \left[ (\zv^+ - \zv^-) (\eta-\sigma) \right] + (\zv^+ - \zv^-) \p_1 ^2 \sigma + 2 \p_1  (\zv^+ - \zv^-) \p_1  \sigma, \\
	R_{10} =~& \nabla \dv \left[ (\zv^+ - \zv^-) (\eta-\sigma) \right] + \nabla \left[ (z_1^+ - z_1^-) \p_1  \sigma \right] + \left(\dv \uv^+ - \dv \uv^-\right)\p_1 \sigma \E_1.
	\end{align*}
	Collecting \cref{h1-1,h1-2,h1-3,h1-4}, one can get that
	\begin{align*}
	\h =~& \left[ \rho^- \p_t \uv^- + \rho^- \uv^- \dnab \uv^- + \nabla p(\rho^-) - \mu \lap \uv^- - (\mu+\lambda) \nabla\dv \uv^- \right] (1-\sigma) \\
	& + \left[ \rho^+ \p_t \uv^+ + \rho^+ \uv^+ \dnab \uv^+ + \nabla p(\rho^+) - \mu \lap \uv^+ - (\mu+\lambda) \nabla\dv \uv^+ \right] \sigma \\
	& + \left[\rhot^r \p_t\ut_1^r + \rhot^r \ut_1^r \p_1 \ut_1^r + \p_1  p(\rhot^r) \right] \E_1  + h_0 \uvt \\
	& + \sum_{i=4}^8 R_i - \mu R_9 - (\mu+\lambda) R_{10} - (2\mu+\lambda) \p_1 ^2 \ut_1^r \E_1, \\
	=~& h_0 \uvt + \sum_{i=4}^8 R_i - \mu R_9 - (\mu+\lambda) R_{10} - (2\mu+\lambda) \p_1 ^2 \ut_1^r \E_1,
	\end{align*}
	which satisfies 
	\begin{equation*}
	\norm{\h+(2\mu+\lambda) \p_1 ^2 \ut_1^r \E_1}_{W^{1,p}(\Omega)} \leq C\norm{h_0}_{W^{1,p}(\Omega)} + C \sum_{i=4}^{10} \norm{R_i}_{W^{1,p}(\Omega)} \leq C \e e^{-\alpha t}.
	\end{equation*}
	
\end{proof}



\vspace{1.5cm}

\end{document}